\documentclass[pdflatex,sn-mathphys-num]{sn-jnl}\usepackage{amsthm}
\usepackage{amssymb}
\usepackage{amsmath}
\usepackage{amscd}
\usepackage{color}
\usepackage{xcolor}
\usepackage{ulem}

\usepackage{graphicx}%
\usepackage{multirow}%
\usepackage{amsmath,amssymb,amsfonts}%
\usepackage{amsthm}%
\usepackage{mathrsfs}%
\usepackage[title]{appendix}%
\usepackage{xcolor}%
\usepackage{textcomp}%
\usepackage{manyfoot}%
\usepackage{booktabs}%
\usepackage{algorithm}%
\usepackage{algorithmicx}%
\usepackage{algpseudocode}%
\usepackage{listings}%

\usepackage{tikz}
\usetikzlibrary{positioning}

\newtheorem{thm}{Theorem}[section]
\newtheorem{coro}[thm]{Corollary}
\newtheorem{lemma}[thm]{Lemma}
\newtheorem{prop}[thm]{Proposition}

\newtheorem{definition}[thm]{Definition}

\newtheorem{remark}[thm]{Remark}

\newtheorem{conjecture}[thm]{Conjecture}

\makeatletter
 \renewcommand{\theequation}{%
      \thesection.\arabic{equation}}
 \@addtoreset{equation}{section}
\makeatother
\def\spmapright#1{\smash{%
 \mathop{\hbox to 1.3cm{\rightarrowfill}}
  \limits^{#1}}}

\begin{document}

\title[Propagation phenomena for operator-valued weighted shifts]{\bf Propagation Phenomena for \\ Operator-Valued Weighted Shifts}

\author*[1]{\fnm{Ra\'ul} \sur{Curto}}\email{raul-curto@uiowa.edu}
\equalcont{All authors contributed equally to this work.}

\author[2]{\fnm{Abderrazzak} \sur{Ech-charyfy}}\email{abderrazzak\_echcharyfy@um5.ac.ma}
\equalcont{All authors contributed equally to this work.}

\author[3]{\fnm{Hamza} \sur{El Azhar}}\email{el-azhar.h@ucd.ac.ma}
\equalcont{All authors contributed equally to this work.}

\author[2]{\fnm{El Hassan} \sur{Zerouali}}\email{elhassan.zerouali@fsr.um5.ac.ma}

\affil[1]{\orgdiv{Department of Mathematics}, \orgname{The University of Iowa}, \orgaddress{\street{} \city{Iowa City}, \postcode{} \state{Iowa}, \country{USA}}}

\affil[2]{\orgdiv{Laboratory of Mathematical Analysis and Applications, Faculty of Sciences, Rabat, Morocco}, \orgname{ Mohammed V University in Rabat}, \orgaddress{\street{} \city{Rabat}, \postcode{} \state{} \country{Morocco}}}

\affil[3]{\orgdiv{Laboratoire d’Informatique, Math\'ematique et leurs Applications (LIMA), Faculty of Sciences}, \orgname{Chouaib Doukkali University}, \orgaddress{\street{} \city{El Jadida}, \postcode{} \state{} \country{Morocco}}}
\equalcont{All authors contributed equally to this work.}


\abstract{This paper is devoted to the study of propagation phenomena for $2$--hyponormal, quadratically hyponormal, and cubically hyponormal operator-valued weighted shifts. \ First, we show that every {\it quadratically} hyponormal matrix-valued weighted shift with two equal weights ({\it excluding the initial weight}) is flat. \ Second, we show that a {\it cubically} hyponormal operator-valued weighted shift with two equal weights ({\it possibly including the initial weight}) is flat. \ Next, we introduce a {\it local flatness} notion for matrix-valued weighted shifts. \ We prove that $2$--hyponormal (in particular, subnormal) matrix-valued weighted shifts satisfy this stronger propagation phenomenon. \ As a result, we prove a {\it structural decomposition theorem} for $2$--hyponormal matrix-valued weighted shifts.}
\maketitle


\section{Introduction}

Let $\mathcal{H}$ be a (complex, separable, infinite-dimensional) Hilbert space, and let $\mathbf{B}(\mathcal{H})$ denote the algebra of all bounded operators on $\mathcal{H}$. \ For $S, T \in \mathbf{B}(\mathcal{H})$, we define the commutator of $S$ and $T$ by $[T, S] := TS - ST$. \ An operator $T \in \mathbf{B}(\mathcal{H})$ is said to be \textit{normal} if $[T^*, T] = 0$ and \textit{hyponormal} if $[T^*, T] \geq 0$, where $T^*$ denotes the adjoint of $T$. \ A normal operator is \textit{self-adjoint} if $T = T^*$ and is \textit{positive} if $\langle Tx, x \rangle \geq 0$ for every $x \in \mathcal{H}$. \ An operator $T$ is \textit{subnormal} if $T = N_{|\mathcal{H}}$, where $N$ is a normal operator on some Hilbert space $\mathcal{K} \supseteq \mathcal{H}$.

Subnormality and hyponormality were introduced by Paul R. Halmos in \cite{Hal}. \ The notion of hyponormality reflects the geometric aspects of normality, with implications for positive matrices. \ On the other hand, subnormality is intimately related to analyticity in complex functions, particularly through the restriction of the functional calculus to invariant subspaces. \ The class of subnormal operators, and, more broadly, the concept of subnormality, has attracted significant attention from many authors. \ Providing a complete bibliography would be too ambitious; instead, we refer to \cite{Con} for a comprehensive treatment of subnormal operators, which includes an extensive survey of subnormal \textit{scalar} weighted shifts (see Section~2 for definitions). 

In the following proposition, we assemble some known characterizations of subnormal operators needed in the sequel.

\begin{thm}\label{s-shift}\cite[Theorem 1.9]{Con} 
Let $T \in \mathbf{B}(\mathcal{H})$. \ Then the following statements are equivalent:
\begin{enumerate}
    \item $T$ is subnormal;
		
    \item (J. Bram \cite{Bra}) \ The operator-valued matrix $([T^{*i}, T^j])_{i,j \geq 0}$ is positive;
    
		\item (J. Bram - P.R. Halmos \cite{Bra})  The operator-valued matrix $(T^{*i}T^j)_{i,j \geq 0}$ is positive;
    
		\item (M. Embry \cite{Embry}) \ The operator-valued matrix $(T^{* i+j}T^{i+j})_{i,j \geq 0}$ is positive;
    
		\item (M. Embry \cite{Embry}) \ There exists an operator-valued measure $E$ supported on a compact set $K = [0, \|T\|^2]$ such that 
    \[
    T^{*n}T^n = \int_K t^n \, dE(t), \quad \text{for every } n \geq 0.
    \]
\end{enumerate}
\end{thm}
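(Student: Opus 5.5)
The plan is to prove the cycle $(1)\Rightarrow(3)\Rightarrow(1)$, then $(2)\Leftrightarrow(3)$, and finally $(3)\Rightarrow(4)\Rightarrow(5)\Rightarrow(3)$. Positivity of an infinite operator matrix is understood as positivity of every finite section $(A_{ij})_{0\le i,j\le n}$, i.e.\ $\sum_{i,j}\langle A_{ij}x_j,x_i\rangle\ge 0$ for all finitely supported families $(x_i)\subset\mathcal H$.

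\emph{$(1)\Rightarrow(3)$.} Suppose $T=N|_{\mathcal H}$ with $N$ normal. For $x_0,\dots,x_n\in\mathcal H$,
\[
\sum_{i,j}\langle T^{*i}T^jx_j,x_i\rangle=\sum_{i,j}\langle N^jx_j,N^ix_i\rangle=\sum_{i,j}\langle N^{*i}N^jx_j,x_i\rangle=\Big\|\sum_j N^{*j}x_j\Big\|^2\ge 0,
\]
where the last equality uses $N^{*i}N^j=N^jN^{*i}$.

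\emph{$(3)\Rightarrow(1)$.} This is Bram's construction. On the space $\mathcal K_0$ of finitely supported sequences $(x_j)\subset\mathcal H$, define the semi-inner product $\langle (x_j),(y_i)\rangle:=\sum_{i,j}\langle T^{*i}T^jx_j,y_i\rangle$. Take $\mathcal K$ to be the completion of $\mathcal K_0$ modulo its null space. Embed $\mathcal H$ via $x\mapsto(x,0,0,\dots)$; this is isometric. Define $S$ as the forward shift on $\mathcal K_0$.

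The main obstacle is showing that $S$ is bounded and that its closure is normal with $S|_{\mathcal H}=T$. I would first try the Lambert/Bram growth argument. Positivity of (3) gives a Cauchy--Schwarz inequality, and iterating it yields $\|S\xi\|^2\le \|\xi\|^{2-2^{-k}}\,\|S^{2^k}\xi\|^{2^{-k}}$. One has the crude bound $\|S^{m}\xi\|\le C_\xi\|T\|^m$, so letting $k\to\infty$ gives $\|S\|\le\|T\|$. Normality would then come from constructing $S^*$ explicitly as the backward operator $(x_j)\mapsto(x_{j+1})$ adjusted appropriately and checking $SS^*=S^*S$ on $\mathcal K_0$.

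If this direct route is too messy, I would instead pass through (5) and use an operator-valued spectral representation. This is where the work lies, so I would cite Bram as the standard source.

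\emph{$(2)\Leftrightarrow(3)$.} Here I would use the Bram--Halmos trick. Positivity of $(T^{*i}T^j)$ tested on vectors of the form $(x_0,\dots)$ can be rearranged, via a triangular congruence, into positivity of the commutator matrix $([T^{*i},T^j])$ after adding the positive matrix $(T^jT^{*i})$. In this step I would follow the reduction in Conway rather than reprove it.

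\emph{$(3)\Rightarrow(4)$.} The matrix in (4) is a compression of the one in (3): test (3) on vectors supported on the indices $i+j$ in the appropriate pattern, or equivalently restrict the sesquilinear form of (3) to vectors $T^{i}x_i$.

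\emph{$(4)\Rightarrow(5)$.} Fix $x$. The sequence $\gamma_n=\langle T^{*n}T^nx,x\rangle$ is a positive-definite Hankel sequence, and it satisfies $\gamma_n\le\|T\|^{2n}\|x\|^2$. By the Hausdorff (Stieltjes) moment theorem there is a measure $\mu_x$ on $[0,\|T\|^2]$ with $\gamma_n=\int t^n\,d\mu_x$. Polarize to get complex measures $\mu_{x,y}$, which are sesquilinear and bounded in $(x,y)$ by uniqueness of the Hausdorff moment problem. The Riesz representation then produces an operator-valued measure $E$.

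\emph{$(5)\Rightarrow(3)$.} Running this backward, (5) gives positivity of $(T^{*i+j}T^{i+j})$. Passing to (3) is again the Embry step, which relates $T^{*i}T^j$ to the moments through the intertwining $T^{*}E(\cdot)T$. I would cite Embry for this implication.

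Overall, the genuinely hard steps are the normal-extension construction in $(3)\Rightarrow(1)$ and Embry's $(4)\Rightarrow(3)$. The remaining implications are routine moment-theoretic arguments.
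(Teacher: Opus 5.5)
The paper gives no proof of this theorem; it quotes Conway. Your overall route is the standard one, but the part you actually write out uses the wrong sesquilinear form. With your convention $\sum_{i,j}\langle A_{ij}x_j,x_i\rangle$, the matrix $(T^{*i}T^j)$ gives $\sum_{i,j}\langle T^jx_j,T^ix_i\rangle=\|\sum_jT^jx_j\|^2$. That is nonnegative for \emph{every} $T$, so (3) read this way cannot imply (1).

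This breaks both directions you sketch.
In (1)$\Rightarrow$(3), the equality $\sum_{i,j}\langle N^{*i}N^jx_j,x_i\rangle=\|\sum_jN^{*j}x_j\|^2$ is false: the left side is $\|\sum_jN^jx_j\|^2$, and normality is never used.
In (3)$\Rightarrow$(1), the map $(x_j)\mapsto\sum_jT^jx_j$ identifies your quotient isometrically with $\mathcal H$ and carries the forward shift to $T$. So the ``completion'' is just $\mathcal H$, and $S=T$, which is normal only if $T$ is.

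The relevant condition is Bram's $\sum_{i,j}\langle T^ix_j,T^jx_i\rangle\ge0$, i.e.\ entries $T^{*j}T^i$ in your convention. This is what the paper's convention $\sum\langle T_{ij}x_i,x_j\rangle$ from Section 2 encodes. For this form, $\sum_{i,j}\langle N^{*j}N^ix_j,x_i\rangle=\sum_{i,j}\langle N^iN^{*j}x_j,x_i\rangle=\|\sum_jN^{*j}x_j\|^2$ genuinely uses normality. With the corrected form, your Schur-complement argument for (2)$\Leftrightarrow$(3) and the substitution $x_i\mapsto T^ix_i$ for (3)$\Rightarrow$(4) go through.

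Even with the right form, your construction picks the wrong operator as the extension. The normal extension is $R\colon(x_j)\mapsto(Tx_j)$, which restricts to $T$ on $\mathcal H\cong\{(x,0,0,\dots)\}$. The forward shift $S$ is $R^*$, and $S|_{\mathcal H}\neq T$ unless $T$ is normal (if it were, $\mathcal H$ would reduce $R$).

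The identities you are missing are $\langle Sf,g\rangle=\langle f,Rg\rangle$ and $RS=SR$. They give $\|Sf\|=\|Rf\|$, hence $\|Sf\|^2=\langle f,RSf\rangle\le\|f\|\,\|S^2f\|$. Log-convexity then gives $\|Sf\|\le\|f\|^{1-1/m}\|S^mf\|^{1/m}$. Your stated inequality has the wrong exponents and would only give $\|S\|\le\|T\|^{1/2}$. Combined with $\|S^mf\|^2\le C_f\|T\|^{2m}$, the correct inequality gives $\|S\|\le\|T\|$. Then $S^*=R$ together with $RS=SR$ makes $R$ normal; no ``adjusted backward shift'' is involved.

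Two smaller points.
\begin{enumerate}
\item In (4)$\Rightarrow$(5), positivity of $(\gamma_{i+j})$ together with $\gamma_n\le\|T\|^{2n}\|x\|^2$ does not place $\mu_x$ on $[0,\|T\|^2]$; take $\gamma_n=(-1)^n$. You also need $(\gamma_{i+j+1})\ge0$, which follows by testing (4) on $(c_iTx)_i$, since $T^*T^{*(i+j)}T^{i+j}T=T^{*(i+j+1)}T^{i+j+1}$.
\item The deepest step, Embry's return from (4)/(5) to (1), is only cited. That matches the paper, but your outline offers nothing for it beyond the identity $T^*E(\Delta)T=\int_\Delta t\,dE(t)$.
\end{enumerate}
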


Subnormal weighted shift operators are extensively studied in functional analysis because of their interesting properties and wide range of applications. \ These operators appear naturally in various areas, including signal processing, quantum mechanics, and the study of bounded linear operators on Hilbert spaces. \ They serve as fundamental examples in operator theory and provide information on the structure of operators acting on infinite-dimensional spaces.

To introduce scalar weighted shift operators, we endow the Hilbert space $\mathcal{H}$ with a canonical orthonormal basis $\{e_n\}_{n \in \mathbb{Z}_+}$. \ The \textit{unilateral (forward) weighted shift} is the linear operator $W_{\alpha}$ defined on the basis of $\mathcal{H}$ by 
\[
W_\alpha e_n := \alpha_n e_{n+1},
\]
where $\alpha = (\alpha_n)_{n \geq 0}$ is a given sequence of positive real numbers (called \textit{weights}). 

It is well known that $W_{\alpha}$ is bounded if and only if the weight sequence is bounded. \ In this case, we have
\[
\|W_{\alpha}\| = \sup_{n \geq 0} \alpha_n < +\infty.
\]

A well-known propagation result of J. Stampfli states that a subnormal scalar weighted shift with two consecutive equal weights is flat; see \cite[Theorem 6]{Sta}. \ Recall that a scalar weighted shift $W_\alpha$ is said to be \textit{flat} if 
\[
\alpha_n = \alpha_1 \quad \text{for every } n \geq 1.
\]

We refer to \cite{Con} and \cite{shield} for an exhaustive study of scalar weighted shift operators.

Clearly, $W_{\alpha}$ is never normal. \ Subnormality for weighted shifts can be read directly from the action on the canonical orthonormal basis. \ More precisely, to describe subnormal weighted shifts, associate with $W_\alpha$ the sequence $(\gamma_n)_{n \geq 0}$, called the moment sequence of $W_\alpha$, given by 
\[
\gamma_0 := 1 \quad \text{and} \quad \gamma_k \equiv \gamma_k(\alpha) := \alpha_0^2 \alpha_1^2 \cdots \alpha_{k-1}^2 \quad \text{for } k \geq 1.
\]

We have the following formulation of the subnormality of $W_{\alpha}$, as given in Theorem \ref{s-shift}, in terms of its moment sequence.

\begin{thm}\label{ss-shift}
Let $W_{\alpha}$ be a weighted shift. \ The following statements are equivalent:
\begin{enumerate}
    \item $W_{\alpha}$ is subnormal;

    \item (Bram-Halmos) \ $(\gamma_{i+j})_{i,j \geq 0} \geq 0$ and $(\gamma_{i+j+1})_{i,j \geq 0} \geq 0$;
    
		\item (Berger; Gellar-Wallen) \ There exists a positive Borel measure $\mu$ supported on a compact set $K = [0, \|W_{\alpha}\|^2]$ such that 
    \[
    \gamma_k = \int_K t^k \, d\mu(t) \quad \text{for every } k \geq 0.
    \]
\end{enumerate}
\end{thm}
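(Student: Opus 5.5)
The plan is to derive Theorem~\ref{ss-shift} from Theorem~\ref{s-shift} by computing the relevant operator matrices explicitly on the basis $\{e_n\}$, since all powers of $W_\alpha$ act diagonally up to a shift.

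\textbf{Computation.} First I compute $W_\alpha^k e_n = \alpha_n\alpha_{n+1}\cdots\alpha_{n+k-1}\, e_{n+k} = \sqrt{\gamma_{n+k}/\gamma_n}\, e_{n+k}$. From this, $W_\alpha^{*k}W_\alpha^k e_n = (\gamma_{n+k}/\gamma_n)\, e_n$, so every operator $W_\alpha^{*m}W_\alpha^m$ is diagonal.

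\textbf{(1)$\Leftrightarrow$(2).} I use Embry's condition (4) of Theorem~\ref{s-shift}. Take a finitely supported vector $x=(x_0,\dots,x_N)$ with $x_i\in\mathcal H$ and write $x_i=\sum_n x_i^{(n)} e_n$. Then
\[
\sum_{i,j}\langle W_\alpha^{*(i+j)}W_\alpha^{i+j}x_j,x_i\rangle=\sum_n \frac{1}{\gamma_n}\sum_{i,j}\gamma_{n+i+j}\,x_j^{(n)}\overline{x_i^{(n)}} .
\]
So positivity of Embry's matrix is equivalent to positivity of every shifted Hankel matrix $(\gamma_{n+i+j})_{i,j}$, for all $n\ge0$. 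It remains to show that positivity of these matrices for $n=0$ and $n=1$ implies it for all $n$. For $n=2m$, restrict the $n=0$ Hankel form to vectors supported on indices $\ge m$. For $n=2m+1$, do the same with the $n=1$ form. The converse direction is immediate, since the cases $n=0,1$ are among the conditions.

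\textbf{(2)$\Rightarrow$(3).} This is the Stieltjes moment problem with boundedness.
\begin{itemize}
\item The sequence satisfies $\gamma_{k+1}\le \|W_\alpha\|^2\gamma_k$.
\item Positivity of both Hankel matrices gives a positive measure $\mu$ on $[0,\infty)$ with $\gamma_k=\int t^k\,d\mu$. This is the Stieltjes theorem, which I would cite rather than reprove.
\item If $\mu$ charged $(\|W_\alpha\|^2+\varepsilon,\infty)$, then $\gamma_k^{1/k}$ would exceed $\|W_\alpha\|^2$ for large $k$, contradicting $\gamma_k\le\|W_\alpha\|^{2k}$. So $\mu$ is supported in $K=[0,\|W_\alpha\|^2]$.
\end{itemize}

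\textbf{(3)$\Rightarrow$(1).} Given $\mu$, define the operator-valued measure $E$ as the diagonal measure $E(\cdot)e_n=\frac{1}{\gamma_n}\Big(\int_{(\cdot)} t^n\,d\mu\Big)e_n$. This $E$ is positive, and
\[
\int t^k\,dE\; e_n=(\gamma_{n+k}/\gamma_n)\,e_n=W_\alpha^{*k}W_\alpha^k e_n .
\]
Condition (5) of Theorem~\ref{s-shift} then yields subnormality. Alternatively, $W_\alpha$ is unitarily equivalent to multiplication by $z$ on the closure of the polynomials in $L^2$ of the rotation-invariant measure induced by $\mu$.

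\textbf{Main obstacle.} The existence of the representing measure in (2)$\Rightarrow$(3) is the one nontrivial analytic input, and I would invoke the classical Stieltjes/Riesz solution. The rest is bookkeeping of the diagonal structure.
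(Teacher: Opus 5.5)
Your argument is correct and follows the route the paper has in mind. The paper gives no separate proof and presents the statement as Theorem~\ref{s-shift} read off on the basis $\{e_n\}$, which is what you do: Embry's condition (4), with the valid compression reducing all shifted Hankel matrices $(\gamma_{n+i+j})_{i,j\geq 0}$ to the cases $n=0,1$, gives (1)$\Leftrightarrow$(2), and condition (5) gives (3)$\Rightarrow$(1).

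The external input you flag, Stieltjes' theorem in (2)$\Rightarrow$(3), is not actually needed. Once (2)$\Rightarrow$(1) is known, condition (5) of Theorem~\ref{s-shift} supplies an operator-valued measure $E$ on $K=[0,\|W_\alpha\|^2]$. Then $\mu:=\langle E(\cdot)e_0,e_0\rangle$ satisfies $\gamma_k=\|W_\alpha^k e_0\|^2=\int_K t^k\,d\mu(t)$, and the support condition holds automatically, so you also skip the growth-rate argument.
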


Several bridges between subnormal and hyponormal operators are introduced and studied in \cite{curtobridge}. \ These bridges or staircases are based on two families of operators: the $k$--hyponormal operators and the weakly $k$--hyponormal operators; for contractions, an alternative formulation is given in terms of $n$--contractivity (see \cite{Exner}. \ The case of scalar weighted shifts is investigated in \cite{CF2, CF1}. \ In particular, the authors studied a concrete criterion for distinguishing between subnormal, $k$--hyponormal, and weakly $k$--hyponormal scalar weighted shifts on a Hilbert space. \ More precisely:

\begin{definition}\cite[Definition 1.3]{curtobridge}
Let $k \in \mathbb{N}$, and let ${\bf T}(k) = (T_1, \cdots, T_k)$ be an $k$--tuple of operators on ${\mathcal H}$. \ We say that ${\bf T}(k)$ is (strongly) hyponormal if the operator matrix $([T_{j}^*, T_i])_{1\le i,j\le k}$ is positive.
\end{definition}

\begin{definition}
Let $k \in \mathbb{N}$. \ An operator $T$ is $k$--hyponormal if the $k$--tuple $(T, \cdots, T^k)$ is hyponormal, that is, if $([T^{*j}, T^i])_{1\le i,j\le k}$ is positive. 
\end{definition}
It is easy to see that
\begin{center}
$(k+1)$--\textrm{hyponormal} $\Rightarrow$ $k$--\textrm{hyponormal} $\Rightarrow$ 1--\textrm{hyponormal} $\Leftrightarrow$ \textrm{hyponormal}.
\end{center}

\begin{remark}{\cite[Theorem 5.1]{positivity}}
An operator matrix 
\(
\begin{pmatrix}
A & B^* \\ 
B & C
\end{pmatrix}
\)
(with \( A \) invertible) is positive if and only if \( A \geq 0 \), \( C \geq 0 \), and \( C\ge B^* A^{-1} B.  \)
\end{remark}
Another characterization of $k$--hyponormality is given as follows. \ 
An operator $T \in \mathbf{B}\mathcal{(H)}$ is $k$--hyponormal if and only if the operator matrix $(T^{*j}T^i)_{0\le i,j\le k}$ is positive. \ 
Clearly, the Bram-Halmos criterion says
$$
T \text{ is subnormal} \Leftrightarrow T \text{ is } k-\text{hyponormal for all } k \in \mathbb{N}.
$$

For convenience, let us introduce the class of  $k_E$--hyponormal operators (i.e., Embry $k$--hyponormal operators) as those for which the operator matrix $(T^{* i+j}T^{i+j})_{0\le i,j\le k}$ is positive. \ Again, it is clear that, by the Embry  criterion,    
$T$ is subnormal if and only if $T$ is $k_E$--hyponormal for all
\ $k\in\mathbb{N}$. \ Moreover,
\begin{center}
$(k+1)_E$--\textrm{hyponormal} $\Rightarrow k_E$--\textrm{hyponormal}
$\Rightarrow 1_E$--\textrm{hyponormal} $\Leftrightarrow$
\textrm{hyponormal}.
\end{center}
In particular, $k_E$--hyponormal operators also climb the staircase between hyponormal and subnormal operators.

It is established in \cite{mcc} that the  $k$--hyponormality implies $k_E$--hyponormality, thanks to the following factorization: \vspace{-12pt}
\begin{center}
\small{$$
\left( \begin{matrix}
    I   & \cdots & T^{*k}T^k \\  
     \vdots & \ddots & \vdots \\
    T^{*k}T^k & \cdots & T^{*2k}T^{2k} \\
\end{matrix}\right) =   
\left( \begin{matrix}
    I  & \cdots & 0 \\
      \vdots & \ddots & \vdots \\
     0 & \cdots & T^{*k} \\
\end{matrix}\right)
\left( \begin{matrix}
    I &   \cdots & T^{*k}\\
      \vdots & \ddots & \vdots \\
    T^k   & \cdots & T^{*k}T^k \\
\end{matrix}\right)
\left( \begin{matrix}
    I& \cdots & 0 \\
  \vdots & \ddots & \vdots \\
     0 & \cdots & T^k \\
\end{matrix}\right).$$}
\end{center}
Although the two notions are equivalent when $T$ is invertible, in the general case of operators, the reverse is not true (see \cite[Example 2.1]{mcc}). \ However, for scalar weighted shifts, the two notions are equivalent, as shown in \cite{mcc}.

\begin{thm}\cite[Theorem 2.2]{mcc}\label{mcct}
Let $W_\alpha$ be a unilateral weighted shift and $k\in\mathbb{N}$. \ Then
$$
W_\alpha \text{ is } k\text{-hyponormal} \iff W_\alpha \text{ is } k_E\text{-hyponormal}.
$$
\end{thm}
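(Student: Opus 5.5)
Only the direction $k_E$ $\Rightarrow$ $k$ needs an argument, since $k$--hyponormality always implies $k_E$--hyponormality by the factorization displayed above. So assume $W_\alpha$ is $k_E$--hyponormal and show that $(W_\alpha^{*j}W_\alpha^i)_{0\le i,j\le k}$ is positive. The plan is to reduce both matrices to scalar conditions on finite Hankel matrices of moments, using that $W_\alpha$ acts diagonally on the basis up to shifts.

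First, $W_\alpha^{*(i+j)}W_\alpha^{i+j}$ is diagonal, with $e_n\mapsto (\gamma_{n+i+j}/\gamma_n)e_n$. Hence the $k_E$ matrix decomposes as a direct sum over $n\ge 0$. For each $n$, positivity is equivalent to
\[
\bigl(\gamma_{n+i+j}\bigr)_{0\le i,j\le k}\ge 0 \quad\text{for every } n\ge 0 .
\]

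Second, take $x=(x_0,\dots,x_k)$ with $x_i=\sum_m c_{i,m}e_m$ and compute
\[
\Big\langle \big(W_\alpha^{*j}W_\alpha^i\big)x,x\Big\rangle=\Big\|\sum_i W_\alpha^i x_i\Big\|^2 .
\]
The vector $W_\alpha^i e_m$ is a multiple of $e_{m+i}$. Grouping terms by the target index $p=m+i$, the quadratic form splits into a sum over $p$ of forms in the finitely many variables $c_{i,p-i}$, with $0\le i\le \min(k,p)$. Each block is
\[
\Big|\sum_i c_{i,p-i}\sqrt{\gamma_p/\gamma_{p-i}}\Big|^2 ,
\]
which is trivially nonnegative. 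This shows that positivity of the $k$--hyponormality matrix $(W_\alpha^{*j}W_\alpha^i)_{0\le i,j\le k}$ is automatic, so that formulation cannot be the one that carries content.

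The step I expect to be the main obstacle is the resolution of this apparent triviality: the paper's definition of $k$--hyponormality uses commutators $([W_\alpha^{*j},W_\alpha^i])_{1\le i,j\le k}$. Even so, I would not compute these commutators directly. Instead I would use the standard fact (Curto) that a weighted shift is $k$--hyponormal iff
\[
\bigl(\gamma_{n+i+j}\bigr)_{0\le i,j\le k}\ge 0 \quad\text{for all } n .
\]
The key steps are:
\begin{enumerate}
\item Block-reduce the commutator matrix along the basis vectors $e_n$. Since $[W_\alpha^{*j},W_\alpha^i]$ maps $e_m$ into a multiple of $e_{m+i-j}$, the matrix becomes a direct sum of finite matrices indexed by a fixed ``diagonal'' level.
\item Write each finite block explicitly as an expression in ratios $\gamma_{\cdot}/\gamma_{\cdot}$.
\item Use a Schur-complement argument, via the Remark on $2\times 2$ operator matrices, to show that this block is positive iff the corresponding Hankel matrix $(\gamma_{n+i+j})_{0\le i,j\le k}$ is positive. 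Here I would conjugate by the diagonal scaling $\mathrm{diag}(\sqrt{\gamma_{n+i}})$. The subtraction from the commutator corresponds exactly to taking the Schur complement of the $(0,0)$ entry.
\end{enumerate}

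Once this equivalence is established, both conditions coincide with the same family of Hankel positivity conditions, and the theorem follows.
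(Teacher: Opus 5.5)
Your final chain, $k_E \iff H_n:=(\gamma_{n+i+j})_{0\le i,j\le k}\ge 0$ for all $n\ge 0 \iff$ commutator $k$--hyponormality, is correct. However, the ``apparent triviality'' you set out to resolve comes from a transposed pairing, not from the mathematics. Your formula $\|\sum_i W_\alpha^i x_i\|^2$ equals $\sum_{i,j}\langle W_\alpha^{*j}W_\alpha^i x_i,x_j\rangle$, which is the form of the Gram matrix $(W_\alpha^{*i}W_\alpha^j)_{i,j}$. The proof of Theorem~\ref{mccto} uses the standard action $(\mathbf A X)_i=\sum_j\mathbf A_{ij}X_j$, and this is the action under which the displayed factorization $\mathbf B=D^*\mathbf A D$, with $D=\mathrm{diag}(I,T,\dots,T^k)$, transfers positivity. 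With it one gets $\sum_{i,j}\langle W_\alpha^i x_j,W_\alpha^j x_i\rangle$, the Bram--Halmos form, which is not automatically nonnegative. (The pairing $\sum\langle T_{ij}x_i,x_j\rangle$ written at the start of Section~2 literally gives your reading, but the paper's arguments do not use it.)

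Your own first sentence exposes the problem. If that matrix were trivially positive, the factorization would make every weighted shift $k_E$--hyponormal, which contradicts your first step. In fact, the Schur complement of the $(0,0)$ entry $I$ in $(T^{*j}T^i)_{0\le i,j\le k}$ is exactly $([T^{*j},T^i])_{1\le i,j\le k}$. So the two definitions coincide for every operator, and your Step~3 is the diagonalized scalar form of this identity.

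There is one small omission in Step~3. At levels $n<0$ we have $W_\alpha^{*j}e_{n+j}=0$, so nothing is subtracted and there is no Schur complement. After the same scaling, the block is $(\gamma_{|n|+i+j})_{0\le i,j\le k-|n|}$. This is a principal submatrix of $H_{|n|}$, so it is implied by the conditions at nonnegative levels.

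With this repaired, your proof takes a different route from the paper's. The paper only cites this scalar statement. Its proof of the operator-valued version (Theorem~\ref{mccto}, adapted from McCullough--Paulsen) computes no moments:
\begin{itemize}
\item it splits $\ell^2(\mathcal H)^{k+1}=\mathcal N\oplus\overline{\mathcal M}$, with $\mathcal M=\ell^2(\mathcal H)\oplus W_{\mathcal A}\ell^2(\mathcal H)\oplus\cdots\oplus W_{\mathcal A}^k\ell^2(\mathcal H)$;
\item it shows that both summands reduce $\mathbf A=(W_{\mathcal A}^{*j}W_{\mathcal A}^i)$;
\item it uses $\langle\mathbf A X,X\rangle=\langle\mathbf B X',X'\rangle$ for $X=(x_0,W_{\mathcal A}x_1,\dots,W_{\mathcal A}^k x_k)$, a direct congruence between the Bram--Halmos and Embry matrices;
\item it treats $\mathcal N$ through the wandering-subspace grading.
\end{itemize}

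In coordinates, $\overline{\mathcal M}$ and $\mathcal N$ are spanned precisely by your level-$n$ blocks with $n\ge 0$ and $n<0$, respectively, once the index-$0$ component is included. The difference is that the paper keeps row and column $0$ inside the matrix. It therefore needs neither commutators nor a Schur complement, and its argument is basis-free. As the subsequent remark notes, it extends to any operator with the wandering subspace property. Your approach is a concrete basis computation tied to the shift structure. In exchange, it confronts the commutator definition directly and yields the explicit Hankel criterion as a by-product.
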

\section{$k$--hyponormal operator weighted shifts}
In this section, we focus on operator-valued weighted shifts. \ Recall that an \textit{operator matrix} $(T_{ij})_{i,j \geq 0}$, whose entries $T_{ij}$ are bounded operators such that $T_{ij} = T_{ji}^*$ for every $i, j \geq 0$, is said to be \textit{positive} if 
\[
\sum_{i,j=0}^k \langle T_{ij}x_i, x_j \rangle \geq 0, \quad \text{for every } x_0, \dots, x_k \in \mathcal{H}, \text{ and } k \in \mathbb{Z}_+.
\]

Let $A = \{A_n\}_{n=0}^\infty$ be a sequence of positive bounded operators on ${\mathcal H}$ such that  $\displaystyle\sup_{n\in\mathbb{Z}_+}\|A_n\| < \infty$. \
Let 
\[
\ell^{2}({\mathcal H}) := \left\{ (x_n)_{n\geq 0} \mid \forall n \geq 0, x_n \in {\mathcal H} \text{ and } \sum_{n=0}^\infty \|x_n\|^2 < \infty \right\}
\]
be equipped with the inner product defined as 
\[
\langle x, y \rangle := \sum_{n=0}^\infty \langle x_n, y_n \rangle
\]
where \(x = (x_n)_{n\in\mathbb{Z}_+}\) and \(y = (y_n)_{n\in \mathbb{Z}_+}\) belong to \(\ell^{2}({\mathcal H})\).

The operator-valued weighted shift associated with an invertible operator weight sequence ${\mathcal A}=\{A_n\}_{n\in\mathbb{Z}_+}$ is a bounded linear operator on 
$\ell^{2}({\mathcal H})$ given by
\[
W_{\mathcal A}(x_0, x_1, \ldots) = (0, A_0x_0, A_1x_1, \ldots),
\]
with $\|W_{\mathcal A}\| = \displaystyle\sup_{n\in\mathbb{Z}_+}\|A_n\|$.

An easy verification shows that the adjoint operator of $W_{\mathcal A}$ is the backward shift operator defined as 
\[
W_{\mathcal A}^*(x_0, x_1, \ldots) = (  A^*_0x_1, A^*_1x_2, \ldots).
\]
Computing $[W_{\mathcal A}^* ,  W_{\mathcal A}] $, we obtain
\[
[W_{\mathcal A}^* ,  W_{\mathcal A}] (x_0, x_1, \ldots) = (  A_0^2x_0, (A_1^2- A_0^2)x_1, \ldots,(A_n^2- A_{n-1}^2)x_n,\ldots),
\]
and thus $W_{\mathcal A}$ is hyponormal if and only if $A_{n}^{2}\le A_{n+1}^{2}$ for every $n\in\mathbb{Z}_+.$

Operator-valued weighted shift operators have been considered by various authors (see, for example, \cite{Fakh, ghatage, Lam}; a related Stampfli theorem has recently been proved by the authors in \cite{EC24}.

Our first result extends Theorem~\ref{mcct} to the case of operator-valued weighted shifts. \ The proof is a modification of the one in \cite[Theorem~2.2]{mcc}.

\begin{thm}\label{mccto} Let $W_{\mathcal A}$ be a unilateral operator-valued weighted shift and $k\in\mathbb{N}^*$. \ Then 
 $$W_{\mathcal A} \ is \  k-hyponormal \ \iff  W_{\mathcal A} \ is  \ k_E-hyponormal.$$
\end{thm}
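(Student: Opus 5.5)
One direction, $k$--hyponormal $\Rightarrow$ $k_E$--hyponormal, holds for every bounded operator. It is the factorization displayed before Theorem~\ref{mcct}: conjugating a positive matrix by $\mathrm{diag}(I,T^*,\dots,T^{*k})$ preserves positivity. So the whole task is the converse for $T=W_{\mathcal A}$, and the plan is to exploit the fact that every $T^{*j}T^i$ acts ``diagonally up to a shift'' on $\ell^2(\mathcal H)$.

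\textbf{Step 1: entries of the Bram--Halmos matrix.} Write $\ell^2(\mathcal H)=\bigoplus_{n\ge0}\mathcal H_n$ with $\mathcal H_n=\mathcal H$. Set $P_n^{(0)}=I$ and $P_n^{(m)}:=A_{n+m-1}\cdots A_{n+1}A_n$, which maps $\mathcal H_n\to\mathcal H_{n+m}$. Then $T^i$ maps $\mathcal H_n$ to $\mathcal H_{n+i}$ via $P_n^{(i)}$, and $T^{*j}$ maps $\mathcal H_{n+i}$ to $\mathcal H_{n+i-j}$ via $(P_{n+i-j}^{(j)})^*$ when $n+i\ge j$, and to $0$ otherwise. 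Reorganize the quadratic form
\[
\sum_{i,j=0}^k\langle T^{*j}T^i x^{(i)},x^{(j)}\rangle=\sum_{i,j}\langle T^ix^{(i)},T^jx^{(j)}\rangle=\Big\|\sum_i T^ix^{(i)}\Big\|^2
\]
by output level. The form is $\|\sum_i T^i x^{(i)}\|^2$, which is trivially $\ge0$, so positivity of $(T^{*j}T^i)$ is automatic. That sanity check tells me the correct matrix to test is the commutator one, or equivalently the moment-type one. I will therefore follow \cite{mcc} and use the version adapted to shifts: $k$--hyponormality of $W_{\mathcal A}$ is equivalent to positivity, for each $n\ge 0$, of a $(k+1)\times(k+1)$ block matrix $M_n(k)$ obtained by compressing $([T^{*j},T^i])$ to the subspace $\bigoplus_{i}\mathcal H_{n+i}$. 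This is the analogue of the scalar reduction to Hankel-type matrices $(\gamma_{n+i+j})$.

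\textbf{Step 2: reduce $k_E$ similarly.} The Embry matrix $(T^{*(i+j)}T^{i+j})$ has diagonal entries $T^{*m}T^m=\bigoplus_n (P_n^{(m)})^*P_n^{(m)}$. So it splits as a direct sum over $n$ of the block matrices
\[
E_n(k):=\big((P_n^{(i+j)})^*P_n^{(i+j)}\big)_{0\le i,j\le k},
\]
the operator analogue of the Hankel matrix $(\gamma_{n+i+j}/\gamma_n)$. Hence $k_E$--hyponormality is equivalent to $E_n(k)\ge0$ for all $n$.

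\textbf{Step 3: key step and main obstacle.} I need to show that each compressed $k$--hyponormality matrix is congruent, via an invertible block-diagonal operator, to some $E_m(k)$ or to a positive combination of $E_m(k)$'s. In the scalar case, the $(i,j)$ entry $\gamma_{n+i+j}/\sqrt{\gamma_{n+i}\gamma_{n+j}}$ type terms are converted into a Hankel matrix by diagonal scaling. Here the scaling factors become the invertible operators $P_n^{(i)}$ (the weights are invertible), so the natural congruence is $X=\mathrm{diag}\big(P_n^{(0)},P_n^{(1)},\dots,P_n^{(k)}\big)$ or its inverse.

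The obstacle is noncommutativity. A product such as $(P_{n+i}^{(j)})^*(P_{n+i}^{(j)})$ sandwiched between $P_n^{(i)}$ factors must reassemble into $(P_n^{(i+j)})^*P_n^{(i+j)}$ in the right order. I will check this using $P_n^{(i+j)}=P_{n+i}^{(j)}P_n^{(i)}$. This identity makes $\langle P_n^{(i+j)}x,P_n^{(i+j)}y\rangle$ factor correctly provided I apply $T^j$ after $T^i$, never in the reverse order. I will first try writing every entry as an inner product of the vectors $T^{i+j}$ applied to level-$n$ vectors, which makes the congruence with $E_n(k)$ transparent.

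With the congruence in hand, $E_n(k)\ge0$ for all $n$ yields positivity of the compressed matrices, hence $k$--hyponormality. If the commutator terms $T^{i}T^{*j}$ cause trouble, because they move down then up, I will handle them by the Schur complement criterion of the Remark with $A=I$ invertible: this reduces $([T^{*j},T^i])\ge0$ to $(T^{*j}T^i)_{0\le i,j\le k}\ge0$ tested on the appropriate level vectors, and then I conclude as above.
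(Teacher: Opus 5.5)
Your proposal has a genuine gap at the step that carries all the weight.

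\textbf{The convention error and the mis-aimed congruence.} The Step~1 sanity check swaps the indices. In the Bram--Halmos matrix $\mathbf A=(T^{*j}T^i)_{0\le i,j\le k}$, the $(i,j)$ entry acts on the $j$-th coordinate, so $\langle \mathbf A x,x\rangle=\sum_{i,j}\langle T^i x_j,T^j x_i\rangle$, not $\|\sum_i T^ix_i\|^2$. This form is not automatically positive. Its Schur complement at the $(0,0)$ entry $I$ is exactly $([T^{*j},T^i])_{1\le i,j\le k}$, which is the reduction your last paragraph invokes. With your convention that same Schur complement would be identically $0$, so Steps~1 and~3 contradict each other.

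Having discarded $\mathbf A$, you aim the congruence at the commutator matrix. That matrix is $k\times k$, so the $(k+1)\times(k+1)$ block $M_n(k)$ is not defined. There, conjugation by $\mathrm{diag}(P_n^{(1)},\dots,P_n^{(k)})$ does not produce $E_n(k)$. The level-$n$ entry $(P^{(j)}_{n+i})^*P^{(i)}_{n+j}-P^{(i)}_n(P^{(j)}_n)^*$ becomes $G_{i+j}-G_iG_j$ with $G_m=(P_n^{(m)})^*P_n^{(m)}$. This is the Schur complement of $E_n(k)$, not $E_n(k)$ or a positive combination of $E_m(k)$'s. It is equivalent to $E_n(k)\ge0$, but only through the Schur-complement step you ruled out.

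The repair is to work with $\mathbf A$ itself. $T^{*j}T^i$ sends coordinate $j$ at level $m$ to coordinate $i$ at level $m+i-j$. Hence $\mathbf A$ is reduced by $\mathcal K_n=\bigoplus_{j=0}^k\mathcal H_{n+j}$ for $n\ge-k$, with $\mathcal H_\ell:=0$ for $\ell<0$. For $n\ge0$ the block has entries $(P^{(j)}_{n+i})^*P^{(i)}_{n+j}$. Conjugation by $\mathrm{diag}(P_n^{(0)},\dots,P_n^{(k)})$ gives exactly $E_n(k)$, since $P^{(j)}_{n+i}P^{(i)}_n=P^{(i+j)}_n=P^{(i)}_{n+j}P^{(j)}_n$. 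This observation is what needs to be written down; citing \cite{mcc}, which treats scalar shifts, does not supply it.

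\textbf{The missing blocks $n<0$.} Restricting to $n\ge0$ leaves a real hole. The blocks with $n=-r$, $1\le r\le k$, involve only coordinates $j\ge r$, at levels $j-r$. Together they make up the paper's $\mathcal N=0\oplus\ker W_{\mathcal A}^*\oplus\cdots\oplus\ker W_{\mathcal A}^{*k}$, while your $n\ge0$ blocks make up its $\mathcal M$. Without them you only obtain positivity on $\mathcal M$. They are handled by conjugating with $\mathrm{diag}(P_0^{(j-r)})_{j=r}^{k}$, which yields
\[
\big((P_0^{(r+i+j)})^*P_0^{(r+i+j)}\big)_{0\le i,j\le k-r}=Z^*\,\big[E_r(k)\big]_{0\le i,j\le k-r}\,Z,\qquad Z=\mathrm{diag}(P_0^{(r)},\dots,P_0^{(r)}).
\]
This is a compression of $E_r(k)$, hence positive.

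\textbf{Comparison with the paper.} With both repairs, your route is a valid alternative to the paper's. The paper splits only into $\mathcal N\oplus\mathcal M$ and on $\mathcal M$ uses $D^*\mathbf A D=\mathbf B$ with $D=\mathrm{diag}(I,W_{\mathcal A},\dots,W_{\mathcal A}^k)$, without computing any entries. Your levelwise version instead reduces both $k$- and $k_E$-hyponormality to $E_n(k)\ge0$ for all $n\ge0$ (essentially Theorem~\ref{k-hyp-shift}), and it makes the $\mathcal N$-part completely explicit.
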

\begin{proof}
 As observed before, it suffices to show that $W_{\mathcal A} \ is \  k_E-hyponormal \ \Rightarrow  W_{\mathcal A} \ is  \ k-hyponormal.$ \ We set 
$$
\mathbf{A}:= (W_{\mathcal A}^{*j}W_{\mathcal A}^i)_{0\le i,j\le k}
$$
and
$$
\mathbf{B}= (W_{\mathcal A}^{*i+j}W_{\mathcal A}^{i+j})_{0\le i,j\le k}.
$$
Let $(x_0,\cdots,x_k)\in \ell^{2}({\mathcal H})^{k+1}.$ \ Since 
$$
\ell^{2}({\mathcal H})=\ker(W_{\mathcal A}^{*p}) \oplus \overline{ W_{\mathcal A}^p(\ell^{2}({\mathcal H}))},
$$
for every $p\ge 0$, we can write 
$$
\ell^{2}({\mathcal H})^{k+1} =[0\oplus \ker(W_{\mathcal A}^{*})\oplus \cdots \oplus \ker(W_{\mathcal A}^{*k})]\bigoplus [\ell^{2}({\mathcal H})\oplus \overline{ W_{\mathcal A}(\ell^{2}({\mathcal H}))}\oplus \cdots \oplus  \overline{ W_{\mathcal A}^k(\ell^{2}({\mathcal H}))}].
$$
Let us denote 
$$
\mathcal N :=  0\oplus \ker(W_{\mathcal A}^{*})\oplus \cdots \oplus \ker(W_{\mathcal A}^{*k})
$$
and
$$
\mathcal M :=   \ell^{2}({\mathcal H})\oplus  { W_{\mathcal A}(\ell^{2}({\mathcal H}))}\oplus \cdots \oplus { W_{\mathcal A}^k(\ell^{2}({\mathcal H}))}.
$$
We clearly have 
${\mathcal N} \oplus \overline{\mathcal M}=\ell^{2}({\mathcal H})^{k+1}$. \ Moreover, ${\mathcal N}$ and $\overline{\mathcal M}$ are $\mathbf{A}$ reducing. \ Indeed, for $X = x_0\oplus W_{\mathcal A}x_1\oplus\cdots\oplus W_{\mathcal A}^kx_k \in {\mathcal M}$ we have
$$
\mathbf{A}X= \left(\sum_{j=0}^{k} W_{\mathcal A}^{*j}W_{\mathcal A}^i(W_{\mathcal A}^jx_j)\right)_{0\le i \le k}= \left(\sum_{j=0}^{k}  (W_{\mathcal A}^{*j}W_{\mathcal A}^j)W_{\mathcal A}^ix_j\right)_{0\le i \le k}\in {\mathcal M}.
$$

It follows that $\overline{\mathcal M}$ is invariant and since $\mathbf{A}$ is self-adjoint,  ${\mathcal N}$ and ${\mathcal M}$ are $\mathbf{A}$ reducing. \ As a consequence,  $\mathbf{A}\ge 0 \iff \mathbf{A}_{|{\mathcal M}} \ge 0 \mbox{ and } \mathbf{A}_{|{\mathcal N}} \ge 0$.  \  To show that $\mathbf{A}_{|{\mathcal M}} \ge 0$, let $X:= x_0\oplus W_{\mathcal A}x_1\oplus\cdots\oplus W_{\mathcal A}^kx_k \in {\mathcal M}$. 

$$  \begin{array}{ll}
    \displaystyle\langle\mathbf{A} X,X\rangle & = \displaystyle  \sum_{i,j=0}^{k} \langle(W_{\mathcal A}^{*j}W_{\mathcal A}^i)W_{\mathcal A}^jx_j, W_{\mathcal A}^ix_i\rangle = \sum_{i,j=0}^{k} \langle W_{\mathcal A}^{*i+j}W_{\mathcal A}^{i+j}x_j,x_i\rangle  \\
     & = \langle \mathbf{B} X',X'\rangle\ge 0, \ \ (\mbox{with } X'=(x_0,\cdots,x_k)).
\end{array}$$ 
 Thus $ \mathbf{A}_{|{\mathcal M}} \ge 0.$  \\

To show that $\mathbf{A}_{|{\mathcal N}} \ge 0$, we start by writing 
$$
{\mathcal N}=  {\mathcal N}_0 \oplus {\mathcal N}_1\oplus \cdots \oplus {\mathcal N}_k,
$$
with
$$
{\mathcal N}_p := 0\cap H_p\oplus \ker(W_{\mathcal A}^*)\cap H_p\oplus \cdots \oplus \ker(W_{\mathcal A}^{*k})\cap {\mathcal H}_p.
$$
Here ${\mathcal H}_p$ is the subspace of vectors $x \in \ell^2({\mathcal H})$ such that each coordinate of $x$ other than the $(p+1)^{th}$--coordinate is zero. \ We also let 
$$
{\mathcal N}_{p,q} := \ker(W_{\mathcal A}^{*q})\cap{\mathcal H}_p,
$$
for $q\le k$. \ Observe that $q\le k$, and hence ${\mathcal N}_{p,q}=0$ and for $x_p\in {\mathcal N}_p$. \ We then have $x_p= W_{\mathcal A}^p\tilde{x_p}$, for some $\tilde{x_p}\in {\mathcal H}_1$. \ We also observe that, for $X\in {\mathcal N}$, we can write
$X= \tilde{x}_0\oplus \cdots \oplus  W_{\mathcal A}^k\tilde{x_k}.$ \ As before, we get $\langle \mathbf{A} X,X\rangle=   \langle \mathbf{B} X',X'\rangle\ge 0$, with 
$X'=(\tilde{x}_0,\cdots,\tilde{x}_k)$. \ Thus $ A_{|{\mathcal N}} \ge 0$. 
 \end{proof}
\begin{remark}
    The main ingredient in the previous proof is the Wandering subspace property $ \ell^{2}({\mathcal H})=\bigoplus\limits_{p=0}^\infty W_{\mathcal A}^p\ell^{2}({\mathcal H})$.  In particular, Theorem \ref{mccto} extends to the general case of operators with the wandering subspace property.
\end{remark}
The next simplification in the study of operator-valued weighted shifts goes back to Lambert in \cite{Lam}.

\begin{thm}
Let $W_{\mathcal{A}}$ be an operator-valued weighted shift associated with an invertible operator weight sequence ${\mathcal{A}} = (A_n)_{n \geq 0}$. \ Then $W_{\mathcal{A}}$ is unitarily equivalent to an operator-valued weighted shift associated with a positive operator weight sequence.
\end{thm}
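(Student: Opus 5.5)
The plan is to build a diagonal unitary on $\ell^2(\mathcal H)$ that conjugates $W_{\mathcal A}$ into a shift with positive weights. The positive weights will come from polar decompositions of modified weights.

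Since each $A_n$ is invertible, its polar decomposition $A_n = V_n|A_n|$ has $|A_n| = (A_n^*A_n)^{1/2}$ positive and invertible. The partial isometry $V_n = A_n|A_n|^{-1}$ is then a genuine unitary on $\mathcal H$: it is an isometry because $V_n^*V_n = |A_n|^{-1}A_n^*A_n|A_n|^{-1} = I$, and it is surjective because $A_n$ is. Boundedness is not affected, since $\||A_n|\| = \|A_n\|$.

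We define unitaries $U_n$ on $\mathcal H$ recursively and set $U := \bigoplus_{n\ge 0} U_n$ on $\ell^2(\mathcal H)$, aiming for
\[
U^* W_{\mathcal A} U (x_0,x_1,\dots) = (0, U_1^*A_0U_0x_0, U_2^*A_1U_1x_1,\dots).
\]
So $U^*W_{\mathcal A}U$ is the operator-valued weighted shift with weights $B_n := U_{n+1}^*A_nU_n$, and the task is to choose the $U_n$ so that every $B_n$ is positive.

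\begin{itemize}
\item Start with $U_0 := I$.
\item Given the unitary $U_n$, the operator $A_nU_n$ is invertible. Write its polar decomposition $A_nU_n = V_n' |A_nU_n|$, where $V_n'$ is unitary by the argument above.
\item Set $U_{n+1} := V_n'$. Then $B_n = U_{n+1}^*A_nU_n = |A_nU_n| = (U_n^*A_n^*A_nU_n)^{1/2}$, which is positive and invertible.
\end{itemize}

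Induction on $n$ defines every $U_n$. Each $U_n$ is unitary, so $U$ is a unitary on $\ell^2(\mathcal H)$. Moreover $\|B_n\| = \|A_n\|$, so the new weight sequence is uniformly bounded and the new shift is bounded.

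The only point needing care is that the polar factor is unitary and not merely a partial isometry. Invertibility of each weight settles this, and it is the only place the hypothesis is used; without it one would only get partial isometries and the construction could fail. The remaining checks, namely the coordinatewise computation of $U^*W_{\mathcal A}U$ and the identity $U^*U = UU^* = I$, are routine.
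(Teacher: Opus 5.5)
Your proof is correct and complete. Setting $U_0=I$ and taking $U_{n+1}$ to be the unitary polar factor of $A_nU_n$ makes each new weight $U_{n+1}^*A_nU_n=|A_nU_n|$ positive and invertible with $\|B_n\|=\|A_n\|$, and the invertibility of $A_n$ is exactly what makes that polar factor unitary rather than merely a partial isometry. The paper gives no argument of its own here (it only attributes the result to Lambert \cite{Lam}), and your recursive polar-decomposition construction with the diagonal unitary $\bigoplus_n U_n$ is the standard proof.
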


For this reason, we will assume in the sequel that ${\mathcal{A}}$ is a sequence of positive invertible operators.

For the study of strong hyponormality of operator-valued weighted shifts with positive weights, as in the scalar case, we need to introduce the operator moments of $W_{\mathcal{A}}$. \ Denote $B_0 = I$ and $B_n = A_{n-1}A_{n-2}\dots A_0$ for $n \geq 1$. \ Then $(B_n^{*} B_n)_{n \in \mathbb{Z}_+}$ is called the operator moment sequence associated with $W_{\mathcal{A}}$.

By expanding $W_{\mathcal{A}}^{*k} W_{\mathcal{A}}^l$ for every $k, l \in\mathbb{Z}_+ $, and using the general criteria of subnormality, we obtain the following formulation of Theorem \ref{s-shift} in terms of the associated operator moment sequence.

\begin{thm}\label{so-shift}
Let $W_{\mathcal{A}}$ be an operator-valued weighted shift. \ The following statements are equivalent:
\begin{enumerate}
    \item $W_{\mathcal{A}}$ is subnormal;
    \item $\big([B_i^{*}, B_j]\big)_{i, j \geq 0} \geq 0$;
    \item (Bram-Halmos) $\big(B^*_{i} B_{j}\big)_{i, j \geq 0} \geq 0$;
    \item (Embry) $\big(B^*_{i+j} B_{i+j}\big)_{i, j \geq 0} \geq 0$ and $\big(B^*_{1+i+j} B_{1+i+j}\big)_{i, j \geq 0} \geq 0$.
\end{enumerate}
\end{thm}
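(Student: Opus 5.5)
The plan is to read every condition in Theorem~\ref{s-shift} coordinatewise on $\ell^2(\mathcal H)=\bigoplus_{m\ge0}\mathcal H_m$, with $\mathcal H_m$ as in the proof of Theorem~\ref{mccto}. Because the weights are positive and invertible, each coordinate block is congruent to a block built only from the operator moments $B_n^*B_n$. The condition on the whole space then becomes a family of moment conditions indexed by the coordinate $m$, and we show that this whole family is equivalent to the ones written in the statement. I will treat (4) as the central criterion. The Bram--Halmos forms (2) and (3) will then follow from Theorem~\ref{s-shift}, together with Theorem~\ref{mccto} (for each $k$, $k$--hyponormality is equivalent to $k_E$--hyponormality), applied for every $k$.

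\textbf{Step 1 (powers of the shift).} From the definition, $W_{\mathcal A}^n$ maps $x_m\in\mathcal H_m$ to $B_{m+n}B_m^{-1}x_m\in\mathcal H_{m+n}$. Hence $W_{\mathcal A}^{*n}W_{\mathcal A}^n$ is block diagonal, and its $m$-th diagonal block is
\[
B_m^{-*}\,B_{m+n}^*B_{m+n}\,B_m^{-1}.
\]

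\textbf{Step 2 (reducing Embry's criterion).} Every entry of the Embry matrix $(W_{\mathcal A}^{*(i+j)}W_{\mathcal A}^{i+j})_{i,j}$ is block diagonal in the same decomposition, so the matrix is positive if and only if, for every $m\ge0$,
\[
\bigl(B_m^{-*}B_{m+i+j}^*B_{m+i+j}B_m^{-1}\bigr)_{i,j\ge0}\ \ge 0 .
\]
Conjugating by $\operatorname{diag}(B_m,B_m,\dots)$, which is invertible, shows this is equivalent to $M_m:=(B_{m+i+j}^*B_{m+i+j})_{i,j\ge0}\ge0$ for every $m$.

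\textbf{Step 3 (only $m=0,1$ are needed).} If $m=2r$, then $M_m$ is the compression of $M_0$ to the indices $i,j\ge r$, since the $(i,j)$ entry of $M_m$ is the $(i+r,j+r)$ entry of $M_0$. If $m=2r+1$, then $M_m$ is likewise the compression of $M_1$ to $i,j\ge r$. Compressions of positive operator matrices are positive, so the full family $\{M_m\}_{m\ge0}$ is positive exactly when $M_0\ge0$ and $M_1\ge0$. These are precisely the two conditions in (4). Combined with Theorem~\ref{s-shift}(4), this proves (1)$\Leftrightarrow$(4).

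\textbf{Step 4 (the Bram--Halmos forms).} For (3), I run the same computation on the truncations $(W_{\mathcal A}^{*j}W_{\mathcal A}^i)_{0\le i,j\le k}$. Theorem~\ref{mccto} gives that positivity of these truncations for all $k$ is equivalent to positivity of the truncated Embry matrices for all $k$. Translating through Steps 1--3, the latter is the moment condition, which I then identify with $(B_i^*B_j)\ge0$ read as the corresponding matrix of moments. Condition (2) follows from (3) by the usual algebraic identity linking commutator matrices and product matrices in Bram's theorem: subtract the backward-shifted matrix and use the standard Schur-complement argument recalled in the Remark on operator matrices.

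The main obstacle is Step 4, where the expressions $B_i^*B_j$ and $[B_i^*,B_j]$ must be interpreted correctly. Off the diagonal, $W_{\mathcal A}^{*j}W_{\mathcal A}^i$ is not block diagonal: it maps $\mathcal H_m$ into $\mathcal H_{m+i-j}$. Its blocks therefore mix different coordinates, and the statement's notation only makes sense after this mixing is accounted for. My first attempt is to avoid the issue altogether by passing through Theorem~\ref{mccto}. That theorem reduces (3) to the Embry conditions already handled in Steps 2--3, so all that remains is to match the resulting moment matrices with the matrices written in the statement.
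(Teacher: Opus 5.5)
Your Steps 1--3 are correct. They carry out carefully what the paper itself does, since its whole justification is ``expand $W_{\mathcal A}^{*k}W_{\mathcal A}^{l}$ and apply Theorem~\ref{s-shift}''. The $m$-th diagonal block of $W_{\mathcal A}^{*n}W_{\mathcal A}^{n}$ is $B_m^{-*}B_{m+n}^*B_{m+n}B_m^{-1}$. Congruence by $\mathrm{diag}(B_m,B_m,\dots)$ removes the outer factors, and the compression remark reduces $\{M_m\}_{m\ge0}$ to $M_0,M_1$. This is a complete proof of (1)$\Leftrightarrow$(4).

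The gap is in Step 4, and it cannot be closed as planned. The ``matching'' you postpone is false: with the paper's definition $B_n=A_{n-1}\cdots A_0$, conditions (2) and (3) are not equivalent to subnormality.

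Take $A_n=\alpha_nI$ with $\alpha_0=2$ and $\alpha_n=1$ for $n\ge1$. Then $B_n=\sqrt{\gamma_n}\,I$, so $[B_i^*,B_j]=0$. The quadratic form of $(B_i^*B_j)_{i,j}$ is $\|\sum_i\sqrt{\gamma_i}\,x_i\|^2\ge0$ under either ordering convention, so (2) and (3) hold. Yet $A_1^2-A_0^2=-3I$, so $W_{\mathcal A}$ is not even hyponormal. Consistently, the truncation $\begin{pmatrix} I&4I\\ 4I&4I\end{pmatrix}$ of the first matrix in (4) is not positive. So no argument, through Theorem~\ref{mccto} or otherwise, can derive (1) from (2) or (3) as written.

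What your Step 4 actually proves is that (1) is equivalent to the Bram and Bram--Halmos conditions for $W_{\mathcal A}$ itself: $([W_{\mathcal A}^{*i},W_{\mathcal A}^{j}])\ge0$ and $(W_{\mathcal A}^{*i}W_{\mathcal A}^{j})\ge0$. That is just Theorem~\ref{s-shift}, with no need for Theorem~\ref{mccto}. You should state explicitly that (2) and (3) must be read this way.

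If you want a genuine moment form of (3), compute it directly:
\begin{itemize}
\item $W_{\mathcal A}^{*j}W_{\mathcal A}^{i}$ maps $\mathcal H_m$ to $\mathcal H_{m+i-j}$ via $B_{m+i-j}^{-*}B_{m+i}^*B_{m+i}B_m^{-1}$.
\item Substitute $y_{j,m}=B_m^{-1}x_{j,m}$, where $x_{j,m}$ is the $m$-th coordinate of $x_j$.
\item The Bram--Halmos form then splits over $d=m-j\in\mathbb Z$ into the Hankel blocks $(B^*_{|d|+i+j}B_{|d|+i+j})_{i,j}$.
\end{itemize}
These are exactly the conditions of (4), just as in the scalar Theorem~\ref{ss-shift}.
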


As an immediate consequence, we have the next well-known characterization of $k$--hyponormal operator-valued weighted shifts.
\begin{thm}\label{k-hyp-shift} Let $W_{\mathcal A}$ be an operator-valued weighted shift. \ The following statements are equivalent.
\begin{enumerate}
    \item $W_{\mathcal A}$ is $k$--hyponormal;

      \item $([B^*_{m+i},B_{m+j}])_{i,j=0 }^k \ge 0$  for every $m\ge 0$; 
      \item (Bram-Halmos)  $(B^*_{m+i}B_{m+j})_{i,j=0 }^k \ge 0$  for every $m\ge 0$;
          \item $($ Embry $)$  \   $(B^*_{m+i+j}B_{m+i+j})_{i,j= 0}^k \ge 0$ for every $m\ge 0$.
\end{enumerate}
\end{thm}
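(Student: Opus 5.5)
The plan is to compute the operator blocks of $W_{\mathcal A}^{*j}W_{\mathcal A}^{i}$ explicitly. Then each of the three matrix criteria becomes a family of block matrices acting on the fibres $\mathcal H$, indexed by the position $m$. Write $x=(x_n)_{n\ge 0}\in\ell^2(\mathcal H)$. Since $W_{\mathcal A}^i$ sends the $n$-th coordinate to the $(n+i)$-th, we have $(W_{\mathcal A}^i x)_{n+i}=A_{n+i-1}\cdots A_n x_n$. Using the positivity of the weights assumed after Lambert's theorem, we get $(W_{\mathcal A}^{*j}y)_n=A_n\cdots A_{n+j-1}\,y_{n+j}$.

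\textbf{Hyponormality.} For $(1)\Leftrightarrow(3)$ we use the characterization recalled in the introduction: $k$--hyponormality is equivalent to positivity of $(W_{\mathcal A}^{*j}W_{\mathcal A}^i)_{0\le i,j\le k}$ on $\ell^2(\mathcal H)^{k+1}$. We decompose $\ell^2(\mathcal H)^{k+1}$ along the coordinate copies of $\mathcal H$. Take test vectors $X=(X_0,\dots,X_k)$ with $X_i=W_{\mathcal A}^i\tilde x_i$, where $\tilde x_i$ is supported in a single coordinate $m$. Then $\langle \mathbf A X,X\rangle$ reduces to $\sum_{i,j}\langle B^{(m)*}_{i+j}B^{(m)}_{i+j}\xi_j,\xi_i\rangle$, where $B^{(m)}_n:=A_{m+n-1}\cdots A_m$. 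This is the same reduction used in the proof of Theorem \ref{mccto}. Conversely, any $X$ splits into an orthogonal sum of pieces living on different ``diagonals'' $m$, and $\mathbf A$ does not mix these diagonals, because $W^{*j}W^i$ shifts a coordinate by $i-j$. Hence positivity of $\mathbf A$ is equivalent to positivity, for every $m\ge 0$, of an explicit $(k+1)\times(k+1)$ block matrix over $\mathcal H$.

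\textbf{Identifying the blocks.} The remaining bookkeeping is to identify this block matrix with $(B^*_{m+i}B_{m+j})_{i,j=0}^k$. We have $B_{m+j}=B^{(m)}_jB_m$ and $B_m$ is invertible, so conjugating by $\operatorname{diag}(B_m,\dots,B_m)$ preserves positivity. This shows that $(B^{(m)*}_iB^{(m)}_j)\ge 0$ is equivalent to $(B^*_{m+i}B_{m+j})\ge 0$, which gives $(1)\Leftrightarrow(3)$.

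\textbf{Embry and commutator forms.} For $(1)\Leftrightarrow(4)$ we invoke Theorem \ref{mccto} to replace $k$--hyponormality by $k_E$--hyponormality, and run the identical diagonal decomposition on $(W^{*(i+j)}W^{i+j})$. This operator is block diagonal in the coordinates, with $m$-th block $B^{(m)*}_{i+j}B^{(m)}_{i+j}$, and the same conjugation by $B_m$ gives (4). For $(2)$ we use the definition of $k$--hyponormality through $([T^{*j},T^i])_{1\le i,j\le k}$. Here the commutator $[W^{*j},W^i]$ restricted to a diagonal yields $B^{(m)*}_iB^{(m)}_j$ minus the corresponding term from position $m-1$. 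So (2) is the coordinate form of the commutator matrix, read with the paper's notational convention $[B_i^*,B_j]$ for these differences. I expect (2) to follow from (3) by the standard Schur-complement identity relating $(T^{*j}T^i)_{0\le i,j\le k}$ and $([T^{*j},T^i])_{1\le i,j\le k}$, using the Remark on positivity of $2\times 2$ operator matrices with invertible $(0,0)$ entry $I$.

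\textbf{Main obstacle.} The main difficulty is the careful verification that the diagonal subspaces reduce the relevant operator matrices. In particular, for the Bram--Halmos matrix the off-diagonal blocks $W^{*j}W^i$ must map the $m$-diagonal into itself with the correct indexing, so that positivity on each diagonal piece is equivalent to global positivity. I would handle this exactly as the $\mathcal M$/$\mathcal N$ splitting in the proof of Theorem \ref{mccto}, using the wandering subspace decomposition.
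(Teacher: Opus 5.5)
Your argument for $(1)\Leftrightarrow(4)$ is sound. In fact your first paragraph already proves it without Theorem~\ref{mccto}. On the diagonal through coordinate $m$, the substitution $X_i=W_{\mathcal A}^i\tilde x_i$ turns the Bram--Halmos form into the Hankel form with blocks $B^{(m)*}_{i+j}B^{(m)}_{i+j}$. Conjugating by $\operatorname{diag}(B_m,\dots,B_m)$ then gives $(B^*_{m+i+j}B_{m+i+j})_{i,j=0}^k$, because $B^{(m)}_{i+j}B_m=B_{m+i+j}$. You did not treat the diagonals with negative index, i.e.\ the space $\mathcal N$ in the proof of Theorem~\ref{mccto}. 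There $X_j$ sits in coordinate $j-e$ for $j\ge e$, and the substitution $X_{e+b}=B_bz_b$ turns the form into the leading principal compression $(B^*_{e+a+b}B_{e+a+b})_{a,b=0}^{k-e}$ of the block with $m=e$, so nothing new arises. All this is more than the paper gives: it only asserts the theorem as an immediate consequence of Theorem~\ref{so-shift}.

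The genuine gap is the step ``Identifying the blocks''. The block you derived is the Hankel block $(B^{(m)*}_{i+j}B^{(m)}_{i+j})_{i,j}$, but you then switch to $(B^{(m)*}_iB^{(m)}_j)_{i,j}$, which is a different matrix. The conjugation by $B_m$ only shows that this second matrix is congruent to $(B^*_{m+i}B_{m+j})_{i,j}$.

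Both of these are Gram matrices. In your convention, $\sum_{i,j}\langle B^*_{m+i}B_{m+j}z_j,z_i\rangle=\|\sum_j B_{m+j}z_j\|^2\ge 0$. So (3) as written holds for every operator-valued weighted shift, and no argument can give $(3)\Rightarrow(1)$.

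A concrete check: take $A_0=2I$ and $A_n=I$ for $n\ge 1$. This shift is not hyponormal. Yet every $B_n$ is a scalar multiple of $I$, so (3) holds and the literal commutators in (2) vanish. Only (4) detects the failure, since its block with $m=0$, $k=1$ is $\begin{pmatrix} I & 4I\\ 4I & 4I\end{pmatrix}\not\ge 0$.

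Your paragraph on (2) inherits this problem, and two further issues:
\begin{itemize}
\item On the $n$-th coordinate, $W_{\mathcal A}^iW_{\mathcal A}^{*j}$ acts by $B^{(n-j)}_iB^{(n-j)*}_j$, with the factors in reverse order and the index shifted by $j$. So $[W_{\mathcal A}^{*j},W_{\mathcal A}^i]$ does not reduce to ``$B^{(m)*}_iB^{(m)}_j$ minus the term from position $m-1$''.
\item The Schur-complement step is only announced (``I expect''), and it would rest on (3).
\end{itemize}

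So (2) and (3), as written, are misformulated, and you should have flagged this rather than proved them. What your method actually establishes is $(1)\Leftrightarrow(4)$. The correct coordinate form of Bram--Halmos is positivity, for every $m\ge 0$, of $\sum_{i,j}\langle B^{(m+j)}_iy_j,\,B^{(m+i)}_jy_i\rangle$ for all $y_0,\dots,y_k\in\mathcal H$. The substitution $y_j=B_{m+j}w_j$ turns this into (4), not into a Gram matrix.
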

We need the next consequence of Cholesky's algorithm from \cite{cmx}.
\begin{lemma}\label{lemmacmx}\cite[Lemma 1,4]{cmx}
    An operator $T\in {\mathcal L({\mathcal H}})$  is $2$--hyponormal if and only if, for every $x,y \in {\mathcal H}, $ we have
\begin{equation}\label{2hyp}
|\langle[T^{*2}, T]y,x\rangle|^2 \le  \langle[T^*, T]x,x\rangle\langle[T^{*2}, T^2]y,y\rangle
\end{equation}
\end{lemma}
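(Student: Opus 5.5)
The plan is to reduce $2$--hyponormality to positivity of a $2\times 2$ operator matrix and then apply the Cauchy--Schwarz-type criterion for positive block matrices. By the definition, $T$ is $2$--hyponormal if and only if
\[
\mathbf{M}:=\begin{pmatrix} [T^*,T] & [T^{*2},T] \\ [T^*,T^2] & [T^{*2},T^2]\end{pmatrix}\ge 0 ,
\]
where the off-diagonal entries are adjoint to each other because $[T^{*2},T]^*=[T^*,T^2]$. Positivity of $\mathbf{M}$ means that for all $x,y\in\mathcal H$,
\[
\langle [T^*,T]x,x\rangle+2\,\mathrm{Re}\,\langle [T^{*2},T]y,x\rangle+\langle [T^{*2},T^2]y,y\rangle\ge 0 .
\]
The lemma is then the standard equivalence between positivity of a $2\times2$ block matrix $\begin{pmatrix} P & Q\\ Q^* & R\end{pmatrix}$, with $P,R\ge0$, and the inequality $|\langle Qy,x\rangle|^2\le\langle Px,x\rangle\langle Ry,y\rangle$.

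\emph{Necessity.} Assume $\mathbf{M}\ge0$. Then its diagonal entries are positive; this is the $1\times1$ case of the definition with one of the vectors taken to be $0$. Positivity gives a semi-inner product on $\mathcal H\oplus\mathcal H$, namely $(u,v)\mapsto\langle \mathbf{M}u,v\rangle$. Apply the Cauchy--Schwarz inequality for this semi-inner product to the vectors $(x,0)$ and $(0,y)$. This yields
\[
|\langle [T^{*2},T]y,x\rangle|^2=|\langle \mathbf{M}(0,y),(x,0)\rangle|^2\le \langle [T^*,T]x,x\rangle\langle [T^{*2},T^2]y,y\rangle .
\]
A semi-inner product still satisfies Cauchy--Schwarz, so no invertibility is needed here.

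\emph{Sufficiency.} Assume the inequality holds for all $x,y$. First obtain positivity of the diagonal entries. Since $[T^*,T]$ and $[T^{*2},T^2]$ are self-adjoint, I would try to recover $[T^*,T]\ge0$ and $[T^{*2},T^2]\ge0$ directly from the inequality. This is the main obstacle, because the inequality alone does not force the diagonal entries to be positive: both quadratic forms could be negative at the same $x,y$, making their product nonnegative.

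I would therefore interpret the lemma as carrying the implicit standing hypothesis that $T$ is hyponormal. Under that hypothesis $[T^{*2},T^2]\ge0$ as well: write $[T^{*2},T^2]=T^*[T^*,T]T+[T^*,T]\,T T^*$-type terms, or more cleanly note that $2$--hyponormality is what is being tested, and that $T^{*2}T^2-T^2T^{*2}\ge0$ follows from hyponormality (Cholesky setting in the cited source). With both diagonal entries positive, rotate $x\mapsto e^{i\theta}x$ so that the cross term is $-2|\langle Qy,x\rangle|$. Then, for $t>0$, replace $x$ by $tx$ and $y$ by $y/t$ and use AM--GM:
\[
t^2 a+t^{-2}c-2|b|\ge 2\sqrt{ac}-2|b|\ge 0 ,
\]
where $a=\langle Px,x\rangle$, $c=\langle Ry,y\rangle$ and $b=\langle Qy,x\rangle$. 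If $a=0$, the inequality forces $b=0$ and the quadratic form is trivially nonnegative, and likewise if $c=0$. This gives $\mathbf{M}\ge0$.

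I expect the diagonal-positivity issue in the sufficiency direction to be the only delicate point. If needed, I would record positivity of $[T^*,T]$ and $[T^{*2},T^2]$ as part of the equivalence, as in the Cholesky-based argument of the cited source, where the pivot $[T^*,T]$ is assumed positive.
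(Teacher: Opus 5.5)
Your necessity argument and your AM--GM step are the $2\times 2$ block criterion the paper relies on. The paper gives no proof beyond the citation and the remark that follows it, and that remark assumes the diagonal entries $A,C$ are positive. You are also right that the ``if'' direction cannot produce that positivity. In fact the statement as printed is false. For $T=\begin{pmatrix}0&1\\0&0\end{pmatrix}\oplus 0$ one has $T^2=0$, so $[T^{*2},T]=[T^{*2},T^2]=0$ and the inequality reads $0\le 0$. Yet $[T^*,T]=\mathrm{diag}(-1,1)\oplus 0$ is not positive, so $T$ is not even hyponormal.

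The genuine error is in your repair: hyponormality of $T$ does \emph{not} give $[T^{*2},T^2]\ge 0$. That operator is $[(T^2)^*,T^2]$, so its positivity means $T^2$ is hyponormal. The square of a hyponormal operator need not be hyponormal (Halmos). Your own expansion shows why nothing can be read off:
\[
[T^{*2},T^2]=T^*[T^*,T]T+T[T^*,T]T^*+\bigl((T^*T)^2-(TT^*)^2\bigr).
\]
The last term need not be positive when $T^*T\ge TT^*$, since squaring is not operator monotone.

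There are two correct fixes.
\begin{enumerate}
\item Adopt your fallback and state the equivalence as: $T$ is $2$--hyponormal iff $T$ and $T^2$ are hyponormal and the inequality holds. This is the form in which the criterion is valid, and it is what the remark with $A,C\in\mathbf{B}_+(\mathcal{H})$ presupposes.
\item Assume only that $T$ is hyponormal and get $[T^{*2},T^2]\ge 0$ from the inequality itself. If $T$ is normal, all three commutators vanish. Otherwise pick $x_0$ with $\langle [T^*,T]x_0,x_0\rangle>0$; the inequality at $x=x_0$ then yields $\langle [T^{*2},T^2]y,y\rangle\ge 0$ for all $y$.
\end{enumerate}
Once both diagonal entries are positive, $a+2\,\mathrm{Re}\,b+c\ge a+c-2\sqrt{ac}\ge 0$ finishes the proof. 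The rescaling $x\mapsto tx$, $y\mapsto y/t$ is not needed.
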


\begin{remark}{\cite[Proposition 5.14]{positivity}}
In general, let $A, C \in \mathbf{B}_+(\mathcal{H})$. \ Then an operator matrix  \(
\begin{pmatrix}
A & B^* \\ 
B & C
\end{pmatrix}
\)
is positive if and only if
$$
|\langle Bx, y\rangle|^2 \leq \langle Ax,x\rangle \langle Cy,y\rangle
$$
for every  $x,y \in \mathcal{H}$. \ In particular, \ref{2hyp} is equivalent to the positivity of the  operator matrix  
$$
\begin{pmatrix} 
[T^*, T] & [T^{*2}, T] \\ 
[T,T^{*2}] & [T^{*2}, T^2] 
\end{pmatrix}.
$$
\end{remark}

In the particular case of $2$--hyponormal operator-valued weighted shifts, we derive the following theorem: 
 \begin{thm}\label{2so-shift} Let $W_{\mathcal A}$ be a operator-valued weighted shift. \
 The following statements are equivalent.
\begin{enumerate}
    \item $W_{\mathcal A}$ is $2$--hyponormal; \vspace{6pt}
		
    \item $\begin{pmatrix} [W_{\mathcal A}^*, W_{\mathcal A}] & [W_{\mathcal A}^{*2}, W_{\mathcal A}]\\ [W_{\mathcal A},W_{\mathcal A}^{*2}] & [W_{\mathcal A}^{*2}, W_{\mathcal A}^2]
        \end{pmatrix}\ge 0$; \vspace{6pt}
    
		\item  $\begin{pmatrix} A_n^2-A_{n-1}^2 & A_{n}A_{n+1}^2-A_{n-1}^2A_n\\ A_{n+1}^2A_{n}-A_nA_{n-1}^2 &   A_{n+1}A^2_{n+2}A_{n+1}-
A_{n}A^2_{n-1}A_{n} 
        \end{pmatrix}\ge 0 \quad (n \ge 0)$, \vspace{6pt}
				
				with the convention $A_{-2}= A_{-1}= 0$.
\end{enumerate}
\end{thm}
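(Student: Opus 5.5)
(1)$\iff$(2) follows from Lemma~\ref{lemmacmx} together with the Remark after it (\cite[Proposition 5.14]{positivity}). Take $A=[W_{\mathcal A}^*,W_{\mathcal A}]$, $C=[W_{\mathcal A}^{*2},W_{\mathcal A}^2]$ and $B=[W_{\mathcal A},W_{\mathcal A}^{*2}]$. If $W_{\mathcal A}$ is $2$--hyponormal, then $A\ge0$ and $C\ge0$: the first is plain hyponormality, and the second is the $(2,2)$ diagonal corner of the positive matrix $([W_{\mathcal A}^{*j},W_{\mathcal A}^i])_{i,j=1}^2$. So in that direction both the Remark and Lemma~\ref{lemmacmx} apply. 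Conversely, the matrix in (2) is precisely the commutator matrix $([W_{\mathcal A}^{*j},W_{\mathcal A}^i])_{1\le i,j\le 2}$ from the definition of $2$--hyponormality. Indeed $[W_{\mathcal A}^{*2},W_{\mathcal A}]^*=[W_{\mathcal A}^*,W_{\mathcal A}^2]$. Hence (2) is literally the definition of $2$--hyponormality, and (1)$\iff$(2) needs only careful bookkeeping of adjoints.

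For (2)$\iff$(3), I compute each block entry on $\ell^2(\mathcal H)$ with positive weights. Each block is diagonal or shifts by one coordinate. Using $W_{\mathcal A}^*(x_n)=(A_nx_{n+1})_n$ and $W_{\mathcal A}^2 e_n$-components $A_{n+1}A_nx_n$, I get the following on the $n$-th coordinate:
\begin{itemize}
\item $[W_{\mathcal A}^*,W_{\mathcal A}]$ acts as $A_n^2-A_{n-1}^2$;
\item $[W_{\mathcal A}^{*2},W_{\mathcal A}^2]$ acts as $A_nA_{n+1}^2A_n-A_{n-1}A_{n-2}^2A_{n-1}$.
\end{itemize}
The off-diagonal block $[W_{\mathcal A}^{*2},W_{\mathcal A}]$ maps coordinate $n+1$ to coordinate $n$, with operator $A_nA_{n+1}^2-A_{n-1}^2A_n$ (up to the index convention).

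The key step is to permute coordinates. Pair coordinate $n$ of the first copy of $\ell^2(\mathcal H)$ with coordinate $n+1$ of the second copy. The $2\times2$ block operator then becomes a direct sum over $n\ge0$ of $2\times2$ operator matrices on $\mathcal H\oplus\mathcal H$. These are exactly the matrices in (3), after an index shift that turns $A_{n-1}A_{n-2}^2A_{n-1}$ at coordinate $n+1$ into $A_nA_{n-1}^2A_n$. One leftover piece is the second-copy coordinate $0$, with entry $A_0A_1^2A_0\ge0$ after the shift. The convention $A_{-1}=A_{-2}=0$ is what makes the $n=0$ block come out right, and I would check that separately. A direct sum of self-adjoint operator matrices is positive iff each summand is, which gives the equivalence.

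The main obstacle is the index bookkeeping. One must verify that the off-diagonal entries match exactly after the reindexing: $A_nA_{n+1}^2-A_{n-1}^2A_n$ in the $(1,2)$ slot and its adjoint $A_{n+1}^2A_n-A_nA_{n-1}^2$ in the $(2,1)$ slot. One must also make sure no coupling between different $n$ survives. I would check this by applying each commutator to a vector supported on a single coordinate, and then reading off which coordinates are hit.
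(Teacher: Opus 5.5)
Your argument is correct. For (2)$\iff$(3) it reaches the paper's conclusion by a different mechanism.

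The paper works with the Cauchy--Schwarz form of positivity (Lemma~\ref{lemmacmx} and the Remark after it). It asserts that, because $[W_{\mathcal A}^*,W_{\mathcal A}]$ and $[W_{\mathcal A}^{*2},W_{\mathcal A}^2]$ are diagonal and $[W_{\mathcal A}^{*2},W_{\mathcal A}]$ is a one-step backward shift, the inequality for arbitrary $X,Y\in\ell^2(\mathcal H)$ reduces to $x\in\mathcal H_n$, $y\in\mathcal H_{n+1}$. It then applies the criterion again to obtain the $2\times 2$ blocks.

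You instead reorder coordinates by a unitary, pairing coordinate $n$ of the first copy with coordinate $n+1$ of the second. The block operator then becomes $A_0A_1^2A_0\oplus\bigoplus_{n\ge0}M_n$, with $M_n$ the matrices in (3). Your entries, the index shift, and the $n=0$ convention all check out.

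Your route buys completeness. It gives both directions at once, and it makes the leftover summand explicit. In the paper, the passage from single coordinates back to arbitrary $X,Y$ is only asserted. One has to sum the coordinatewise bounds over $n$ and apply Cauchy--Schwarz in $\ell^2$. One also has to note that the uncoupled coordinate contributes $\langle A_0A_1^2A_0y_0,y_0\rangle\ge0$. The paper's route, in turn, keeps the result in the scalar-inequality form \eqref{eqn}. That is exactly what it uses afterwards in the local propagation theorem, by inserting specific vectors.

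For (1)$\iff$(2), the paper just cites a known result. Your observation that (2) is the defining matrix $([W_{\mathcal A}^{*j},W_{\mathcal A}^i])_{i,j=1}^2$ is right. However, you should say explicitly that the $(2,1)$ entry must be read as $[W_{\mathcal A}^*,W_{\mathcal A}^2]=[W_{\mathcal A}^{*2},W_{\mathcal A}]^*$. As printed, $[W_{\mathcal A},W_{\mathcal A}^{*2}]=-[W_{\mathcal A}^{*2},W_{\mathcal A}]$ is not the adjoint of the $(1,2)$ entry. So your $B$ should be $[W_{\mathcal A}^*,W_{\mathcal A}^2]$, not $[W_{\mathcal A},W_{\mathcal A}^{*2}]$. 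Once the entry is read correctly, the equivalence is tautological, and the detour through Lemma~\ref{lemmacmx} in your first paragraph is unnecessary.
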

\begin{proof}
 $(1) \iff (2)$ is known, and can be found in  \cite{Cur90} for example.\\
$(2)\iff (3)$ From Lemma \ref{lemmacmx}, we  have\
     \begin{equation}\label{propag} (2)\iff |\langle [W_{\mathcal A}^{*2}, W_{\mathcal A}]Y,X\rangle|^2 \le  \langle [W_{\mathcal A}^*, W_{\mathcal A}]X,X\rangle\langle [W_{\mathcal A}^{*2}, W_{\mathcal A}^2]Y,Y\rangle,
\end{equation}  
for arbitrary $X, Y\in \ell^{2}({\mathcal H})$.  \ Since $[W_{\mathcal A}^*, W_{\mathcal A}], [W_{\mathcal A}^{*2}, W_{\mathcal A}^2]$ are diagonal and $[W_{\mathcal A}^{*2}, W_{\mathcal A}]$ is a backward shift, it follows that 
$$
(2) \iff |\langle [W_{\mathcal A}^{*2}, W_{\mathcal A}]y,x \rangle|^2 \le \langle [W_{\mathcal A}^*, W_{\mathcal A}]x ,x \rangle\langle[W_{\mathcal A}^{*2}, W_{\mathcal A}^2]y,y\rangle,
$$
for arbitrary $x  \in {\mathcal H}_n$ and $  y \in {\mathcal H}_{n+1}$ respectively. \ Here $\mathcal{H}_n = \left\{ (x\delta_{k,n})_{k \in \mathbb{N}} \mid  x\in {\mathcal H}   \right\}
$. \ 

We deduce that \vspace{-16pt}
\begin{center}
\small{ 
\begin{equation}\label{eqn}
    |\langle (A_{n}A_{n+1}^2-A_{n-1}^2A_n)y,x \rangle|^2 \le \langle (A^2_n-A^2_{n-1})x,x\rangle\langle (A_{n+1}A^2_{n+2}A_{n+1}-
A_{n}A^2_{n-1}A_{n} )y,y\rangle. \end{equation}}
\end{center}

Using Lemma \ref{lemmacmx} again, we conclude that $(2)\iff (3)$. 
\end{proof}
 
In the matricial case (where $({\mathcal H}= {\mathbb C}^p$ for some $p\ge 2$), we derive the next local forward propagation phenomenon.
\begin{thm}Let $W_{\mathcal A}$ be a $2$--hyponormal matrix-valued weighted shift and $x\in {\mathcal H}$ such that $A_nx=A_{n-1}x$ for some $n\ge 1$. \ Then $A_nx=A_1x$  for every $n\ge 1$.
\end{thm}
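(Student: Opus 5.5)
The plan is to work with the subspaces
$$
\mathcal K_m:=\ker\big(A_m^2-A_{m-1}^2\big)\subseteq \mathcal H=\mathbb C^p \qquad (m\ge 1)
$$
and to show that, once one of them contains $x$, they are all linked by the weights. \emph{First step:} translate the hypothesis. If $A_nx=A_{n-1}x$, then $\langle (A_n^2-A_{n-1}^2)x,x\rangle=\|A_nx\|^2-\|A_{n-1}x\|^2=0$. Since $W_{\mathcal A}$ is hyponormal we have $A_n^2-A_{n-1}^2\ge 0$, so $x\in\mathcal K_n$. Conversely, $x\in\mathcal K_m$ gives $\|A_mx\|=\|A_{m-1}x\|$, but not yet $A_mx=A_{m-1}x$; this point has to be recovered at the end.

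\emph{Second step (forward propagation for the image vector).} Apply the inequality (\ref{eqn}) from the proof of Theorem~\ref{2so-shift} at index $n$ with this $x$. The left factor $\langle (A_n^2-A_{n-1}^2)x,x\rangle$ vanishes, so $\langle (A_nA_{n+1}^2-A_{n-1}^2A_n)y,x\rangle=0$ for all $y$. Hence $(A_{n+1}^2A_n-A_nA_{n-1}^2)x=0$. Using $A_{n-1}^2x=A_n^2x$, this becomes $A_{n+1}^2(A_nx)=A_n^2(A_nx)$, that is, $A_n\mathcal K_n\subseteq \mathcal K_{n+1}$. Running the same argument at index $n-1$, now with the roles of the two diagonal blocks of Theorem~\ref{2so-shift}(3) exchanged, should give a \emph{backward} relation: something like $A_{n-1}^{-1}\mathcal K_n\subseteq \mathcal K_{n-1}$, or $\mathcal K_n\subseteq A_{n-1}\mathcal K_{n-1}$. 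This would mirror Stampfli's scalar argument, where $2$-hyponormality forces $\alpha_{n-1}=\alpha_n$ from $\alpha_n=\alpha_{n+1}$.

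\emph{Third step (finite dimensionality).} Since the $A_m$ are invertible, the inclusions give $\dim\mathcal K_n\le\dim\mathcal K_{n+1}\le\cdots\le p$, and the backward relation gives the reverse inequalities down to $\mathcal K_1$. Hence all the dimensions agree, and the inclusions are equalities: $A_m\mathcal K_m=\mathcal K_{m+1}$ for every $m\ge1$. This is the point where finite dimensionality (the matricial hypothesis) is really used, and it matches the restriction of the theorem to matrix-valued shifts.

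\emph{Main obstacle.} The hard part is passing from ``$A_nx\in\mathcal K_{n+1}$'' to ``$x\in\mathcal K_{n+1}$'', and then to the vector identity $A_mx=A_1x$ rather than only $\|A_mx\|=\|A_1x\|$. I would first try to show that the common subspace $\mathcal K:=\mathcal K_1=\mathcal K_2=\cdots$ is the same for all $m$, and that on it $A_1^2=A_2^2=\cdots$. If that holds, $\mathcal K$ is invariant under each $A_m^2$, because $A_m^2$ and $A_{m-1}^2$ agree on $\mathcal K$ and map it into $\mathcal K_{m+1}=\mathcal K$. By the equality of dimensions it is then reducing for the positive matrices $A_m^2$, hence for their square roots $A_m$. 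Restricted to $\mathcal K$, all the $A_m$ are then the positive square root of one and the same positive matrix, so $A_mx=A_1x$ for all $m\ge1$. To establish the equality $\mathcal K_m=\mathcal K_{m+1}$ itself, I would compare traces or determinants of the compressions of $A_m^2$ to these subspaces, combined with the monotonicity $A_m^2\le A_{m+1}^2$; I expect this to be the step requiring the most care.
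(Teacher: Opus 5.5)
\textbf{There is a genuine gap after Step 2.} Step 2 itself is correct, and it is the same computation as the paper's forward step. If the left factor in (\ref{eqn}) vanishes, you get $(A_{n+1}^2A_n-A_nA_{n-1}^2)x=0$, i.e.\ $A_n\mathcal K_n\subseteq\mathcal K_{n+1}$.

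\emph{The target of Step 3 is false.} You replace the hypothesis $A_nx=A_{n-1}x$ by the weaker $x\in\mathcal K_n$, which only records $\|A_nx\|=\|A_{n-1}x\|$. The subspaces $\mathcal K_m=\ker(A_m^2-A_{m-1}^2)$ then need not coincide, even for subnormal shifts satisfying the hypothesis.
\begin{itemize}
\item The scalar subnormal shift with $\alpha_0=1/2$ and $\alpha_m=1$ for $m\ge1$ has $\mathcal K_1=0\neq\mathcal K_2$. So no backward relation can reach $\mathcal K_1$, consistent with the theorem concluding $A_mx=A_1x$ rather than $A_0x$.
\item The $\mathcal K_m$ can also move at every level. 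Take $p=2$ and the subnormal shift with moments $B_n^*B_n=\sum_{i=1}^3t_i^n c_ic_i^*$, where $c_1,c_2,c_3$ are pairwise independent with $\sum_i c_ic_i^*=I$ and $t_1,t_2,t_3>0$ are distinct.
\item At each level the normalized moments $N_j=B_k^{-*}B_{k+j}^*B_{k+j}B_k^{-1}$ again come from three rank-one atoms. Hence $N_2-N_1^2$ has rank at most one, and $\mathcal K_{k+1}=A_k\ker(N_2-N_1^2)\neq0$.
\item On the other hand, no nonzero $x$ satisfies $A_mx=A_{m-1}x$. A flat eigenvector would make $\|A_{m+j-2}\cdots A_{m-1}x\|^2=\sum_it_i^{m-1+j}|\langle B_{m-1}^{-1}x,c_i\rangle|^2$ $(j\ge0)$ a single geometric sequence, which forces $B_{m-1}^{-1}x$ to be orthogonal to two of the $c_i$.
\item Since $A_m\mathcal K_m\subseteq\mathcal K_{m+1}$, an equality $\mathcal K_m=\mathcal K_{m+1}$ would make $\mathcal K_m$ reducing with $A_{m-1}=A_m$ on it, which is impossible here. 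So consecutive $\mathcal K_m$ differ.
\item Taking the direct sum with the unweighted scalar shift (weights $A_m\oplus1$ on $\mathbb C^3$) gives an example that satisfies the hypothesis and in which the $\mathcal K_m$ still differ.
\end{itemize}
So no trace or determinant comparison can deliver your Step 3.

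\emph{The backward relation is only guessed, and the proposed mechanism cannot produce it.} In the block at index $n-1$, the other diagonal entry is $A_nA_{n+1}^2A_n-A_{n-1}A_{n-2}^2A_{n-1}$. It does not vanish on any vector you control; in the scalar case it equals $1-\alpha_{n-2}^2$ once $\alpha_{n-1}=\alpha_n=\alpha_{n+1}=1$, which is nonzero unless the desired conclusion already holds. So the vanishing trick of Lemma~\ref{lemmacmx} gives nothing. Backward propagation needs a genuine Schur-complement argument; the paper uses the $3\times3$ Embry matrix together with Smul'jan's lemma. It only works while the weight two steps below is an actual invertible weight, which is why it stops at the pair $(A_1,A_2)$.

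\emph{The missing idea is to work with $V_n:=\ker(A_n-A_{n-1})$ instead of $\mathcal K_n$.}
\begin{itemize}
\item If $A_nx=A_{n-1}x$, then $A_n^2x=A_{n-1}^2x$, which reads $A_n(A_nx)=A_{n-1}(A_nx)$. So $V_n$ is invariant, hence reducing, for $A_n$ and $A_{n-1}$, and $A_nV_n=V_n$.
\item Your Step 2 then says $A_{n+1}^2=A_n^2$ on $A_nV_n=V_n$. Hence $V_n$ reduces $A_{n+1}$ and $A_{n+1}=A_n$ there. No dimension count, and no finite dimensionality, is needed.
\item This is what justifies the paper's normalization $A_nx=A_{n-1}x=x$, applied to eigenvectors of $A_n|_{V_n}$.
\item For the backward step, the quadratic form of the block in Theorem~\ref{2so-shift}(3) at index $k$ is $\|A_ku+A_{k+1}^2w\|^2-\|A_{k-1}(u+A_kw)\|^2+\|A_{k+2}A_{k+1}w\|^2-\|A_{k+1}^2w\|^2$.
\item Take $k=n-2$, $w\in V_n$ and $u=-A_{n-2}^{-1}A_{n-1}^2w$. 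This yields $-\|A_{n-3}A_{n-2}^{-1}(A_{n-2}^2-A_{n-1}^2)w\|^2\ge0$. Hence, for $n\ge3$, $A_{n-2}=A_{n-1}$ on $V_n$.
\end{itemize}
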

\begin{proof}
Without loss of generality, we may suppose that $A_nx=A_{n-1}x=x$. \ Let us show that $A_{n+1}x=x$. \ Applying Equation \ref{eqn}, we obtain $$0=\langle (A_{n}A_{n+1}^2-A_{n-1}^2A_n)y,x\rangle = \langle y,(A_{n+1}^2A_{n}-A_nA_{n-1}^2)x\rangle=\langle y, A_{n+1}^2x- x\rangle,$$ for every $y\in {\mathcal H}.$ \ It follows that $ A_{n+1}^2x=x.$ \
Now, writing $0=(A_{n+1}^2-I)x=(A_{n+1}+I)(A_{n+1}-I)x$ and using $A_{n+1}+I$ invertible, we derive that $ A_{n+1}x= x$. \\

To obtain backward propagation, suppose that $A_{n+2}x=A_{n+3}x=x$, and let us show that $A_{n+1}x=x$. \ For \( n \geq 0 \), denote  
\begin{equation}
    A(n,0) := A_n^2 \quad \textrm{and} \quad 
    A(n,k) := A_n A_{n+1} \cdots A_{n+k-1} A_{n+k}^2 A_{n+k-1} \cdots A_{n+1} A_n,
		\end{equation}
for $k \geq 1$. \ Consider the matrix
$$
\begin{pmatrix}
    I & B_1^*B_1 & B_2^*B_2\\
    B_1^*B_1& B_2^*B_2 & B_3^*B_3\\
    B_2^*B_2& B_3^*B_3 & B_4^*B_4
\end{pmatrix}\ge 0, 
$$
and let it act on arbitrary vectors $(x_k)_{k\ge 0}$, with $x_k= 0$ for $k\notin\{n,n+1,n+2\}$. \ It follows that 
$$
\begin{pmatrix}
    I &A(n,0) &A(n,1)\\
  A(n,0)&A(n,1) &A(n,2)\\
A(n,1)&A(n,2)& A(n,3)\end{pmatrix}\ge 0.
$$ 
We also need the following operator version of Smul'jan's extension theorem from 
\cite[Proposition 2.2]{CF1}.
 
\begin{lemma}\label{smuljano} Let $X,Y$ and $Z$ be complex matrices. \ The following statements are equivalent:
\begin{enumerate}
    \item $\begin{pmatrix}
        X & Y \\
        Y^* & Z
    \end{pmatrix} \ge 0$;
    \item $X, Z \ge 0$ and there exists $U \in \mathcal{L}(\mathcal{H}, \mathcal{K})$ such that $ZU = Y^*$ and $X \ge U^*ZU$.
\end{enumerate}
\end{lemma}
We use Lemma \ref{smuljano}, with $Z=\begin{pmatrix}
 A(n,1)&A(n,2)\\
 A(n,2)&  A(n, 3)\end{pmatrix}$. \ There exists $W=\begin{pmatrix}
   W_1 \\
   W_2
  \end{pmatrix}$ such that $$
  \begin{pmatrix}
 A(n,1)&A(n,2)\\
 A(n,2)&  A(n, 3)\end{pmatrix}\begin{pmatrix}
   W_1 \\
   W_2\\
  \end{pmatrix}=\begin{pmatrix}
   A(n,0) \\
 A(n,1)
   \end{pmatrix} =\begin{pmatrix}
      A_n^2  \vspace{4pt}\\
 A_nA_{n+1}^2A_n
   \end{pmatrix}$$
If we now left multiply both sides by $ \begin{pmatrix}
 (A_{n}A_{n+1})^{-1}&0 \vspace{4pt}\\
0&  (A_{n}A_{n+1})^{-1}\end{pmatrix}$, we readily obtain
$$
\begin{array}{lll}\begin{pmatrix}
   A_{n+1}A_n &  A_{n+2}^2A_{n+1}A_n \vspace{4pt}\\
  A_{n+2}^2A_{n+1}A_n&  A_{n+2}A_{n+3}^2A_{n+2}A_{n+1}A_n\end{pmatrix}\begin{pmatrix}
   W_1 \\
   W_2\\
  \end{pmatrix}&=&\begin{pmatrix}
       A_{n+1}^{-1}A_n  \vspace{4pt}\\
  A_{n+1} A_n
   \end{pmatrix}\end{array}.
	$$
   Taking adjoints, we obtain
    $$ \begin{pmatrix}
   W_1^*,
   W_2^*
  \end{pmatrix}\begin{pmatrix}
   A_nA_{n+1} &  A_nA_{n+1}A_{n+2}^2 \vspace{4pt}\\
 A_nA_{n+1}A_{n+2}^2&  A_nA_{n+1}A_{n+2}A_{n+3}^2A_{n+2}\end{pmatrix} =\begin{pmatrix}
       A_nA_{n+1}^{-1} , 
  A_nA_{n+1}  
   \end{pmatrix}.$$ \ We evaluate at $ \left(\substack{x\\x}\right)$, with $A_{n+2}x=A_{n+3}x=x$ to obtain
 $ A_nA_{n+1}x = A_nA_{n+1}^{-1}x$. \
 
Now, left multiplying by $ A_n^{-1}$ gives  $  A_{n+1}x =  A_{n+1}^{-1}x $, which also implies $ A_{n+1}x=x$, as required. 
\end{proof}
As an immediate consequence, we recover the following global forward propagation phenomenon.
\begin{coro}\cite[Theorem 5.7]{CEIZ} Let $W_{\mathcal A}$ be a $2$--hyponormal matrix-valued weighted shift such that $A_k=A_{k-1}$ for some $k\ge 1$. \ Then $A_n=A_1$  for every $n\ge 1$.
\end{coro}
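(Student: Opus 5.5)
The plan is to derive the corollary from the local propagation theorem just proved, applied vectorwise. Suppose $A_k=A_{k-1}$ for some $k\ge 1$. Then every $x\in\mathcal H=\mathbb C^p$ satisfies $A_kx=A_{k-1}x$. By the previous theorem, for each such $x$ we get $A_nx=A_1x$ for every $n\ge 1$. Since this holds for all $x$, the operators agree: $A_n=A_1$ for all $n\ge 1$.

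The one point to check is that the theorem was proved under a normalization. Its proof begins with ``without loss of generality $A_nx=A_{n-1}x=x$'', whereas here we only know $A_kx=A_{k-1}x$, not that $x$ is a fixed vector. To avoid leaning on that reduction for arbitrary vectors, I would rerun the two propagation steps at the operator level, where they are actually easier.

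\emph{Forward step.} Assume $A_k=A_{k-1}$. The off-diagonal entry in Theorem~\ref{2so-shift}(3) at index $k$ is $A_kA_{k+1}^2-A_{k-1}^2A_k=A_k(A_{k+1}^2-A_k^2)$. The top-left entry $A_k^2-A_{k-1}^2$ vanishes. Positivity of a $2\times 2$ operator matrix with a zero diagonal block forces the corresponding off-diagonal block to vanish, by the Cauchy--Schwarz form \eqref{eqn}. Hence $A_k(A_{k+1}^2-A_k^2)=0$. Since $A_k$ is invertible, $A_{k+1}^2=A_k^2$, and taking positive square roots gives $A_{k+1}=A_k$. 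Induction then yields $A_n=A_k$ for all $n\ge k$.

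\emph{Backward step.} If $k\ge 3$, we must also push the equality down to $A_1$. Knowing $A_{n+2}=A_{n+3}$ with $n=k-3$, I use the Smul'jan argument from the theorem's proof. Evaluating the identity obtained there on $\binom{x}{x}$ for arbitrary $x$, now without any normalization, I expect it to yield $A_{n+1}=A_{n+2}$. The backward step is the main obstacle. In the proof above it relied on $A_{n+2}x=x$, so the identity $A_nA_{n+1}x=A_nA_{n+1}^{-1}x$ must be redone with $A_{n+2}=A_{n+3}=:C$ general. The cleaner route is a scaling: replace $\mathcal A$ by $C^{-1/2}$-type conjugations. That fails unless the weights commute, so instead I would redo the Smul'jan computation with $C$ in place of $I$. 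I would look for the relation $A_{n+1}^2=C^2$ from matching the columns of $W$, and then take square roots. Iterating downward reaches $A_1$, but not $A_0$, which is consistent with the statement $A_n=A_1$ for $n\ge1$.
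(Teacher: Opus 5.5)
Your first paragraph is exactly the paper's derivation: the corollary is recorded as an immediate consequence of the local theorem, applied to every $x\in\mathbb C^p$. Rerunning both steps at the operator level is a different and sturdier route, and your worry about the normalization $A_kx=A_{k-1}x=x$ is justified. Without it, the vector argument for a general $x$ with $A_kx=A_{k-1}x$ gives only $(A_k^2-A_{k-1}^2)x=0$ and then $A_{k+1}^2A_kx=A_k^3x$. That is a statement about the vector $A_kx$ and about squares, not $A_{k+1}x=A_kx$. Your forward step avoids this issue entirely and is correct as written.

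The backward step, which you left as an expectation, does go through without any commutativity, and it gives exactly the relation you predicted. Set $C:=A_{n+2}=A_{n+3}$ and $G:=A_{n+1}A_n$. Then $A(n,1)=G^*G$, $A(n,2)=G^*C^2G$ and $A(n,3)=G^*C^4G$. After left multiplication by $\mathrm{diag}\bigl((G^*)^{-1},(G^*)^{-1}\bigr)$, the identity from Lemma~\ref{smuljano} becomes $GW_1+C^2GW_2=A_{n+1}^{-1}A_n$ and $C^2(GW_1+C^2GW_2)=A_{n+1}A_n$. Substituting the first into the second gives $C^2A_{n+1}^{-1}A_n=A_{n+1}A_n$, i.e.\ $A_{n+1}^2=C^2$, and uniqueness of positive square roots gives $A_{n+1}=C$.

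There is an even shorter option. Suppose $j\ge 1$ and $A_{j+1}=A_{j+2}=C$, and put $P=A_j$, $Q=A_{j-1}$. The $2\times2$ block of Theorem~\ref{2so-shift}(3) at index $j$ equals $\begin{pmatrix}P\\ C^2\end{pmatrix}\begin{pmatrix}P & C^2\end{pmatrix}-\begin{pmatrix}Q\\ PQ\end{pmatrix}\begin{pmatrix}Q & QP\end{pmatrix}$. Testing its quadratic form on $(u,v)$ with $u=-P^{-1}C^2v$ gives $Q(P-P^{-1}C^2)v=0$, hence $P^2=C^2$. So both directions follow from the same local $2\times2$ blocks and invertibility of the weights. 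Neither the Embry matrix nor Smul'jan's lemma is needed.
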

We also derive the next structural result about $2$--hyponormal operators. \ We assume, without any loss of generality, that $W_{\mathcal A}$ acting on ${\ell}^2({\mathbb C}^p)$ is a matrix-valued weighted shift such that $A_1= I$.
\begin{coro} Let $W_{\mathcal A}$ be a $2$--hyponormal matrix-valued weighted shift and denote  $ E= A_0^{-1}(ker(A_2-A_1))\subset {\mathbb C}^p$. \ Then $W_{\mathcal A}= W_{\mathcal A_1}\oplus W_{\mathcal A_2}$, with $ W_{\mathcal A_1}$ defined on ${\ell}^2(E)$,  is flat, while $ W_{\mathcal A_1}$, defined on   ${\ell}^2(E^\perp)$, is  associated with a strictly increasing weight sequence.
\end{coro}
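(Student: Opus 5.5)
The plan is to combine the local forward propagation theorem proved just above with the fact that all weights are positive (hence self-adjoint) matrices, normalized so that $A_1=I$.

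Set $F:=\ker(A_2-A_1)=\ker(A_2-I)\subset\mathbb{C}^p$, so that $E=A_0^{-1}(F)$ and $A_0E=F$.

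\textbf{Step 1: flatness on $F$.} For $x\in F$ we have $A_2x=A_1x$. The local propagation theorem then gives $A_nx=A_1x=x$ for every $n\ge 1$. Hence $F\subseteq\ker(A_n-I)$ for all $n\ge1$.

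\textbf{Step 2: $F$ reduces every $A_n$, $n\ge1$.} Each $A_n$ is self-adjoint and fixes $F$ pointwise, so it also leaves $F^\perp$ invariant: for $y\perp F$ and $x\in F$, $\langle A_ny,x\rangle=\langle y,A_nx\rangle=\langle y,x\rangle=0$. Thus $A_n=I_F\oplus A_n'$ with $A_n'$ acting on $F^\perp$. Moreover $\ker(A_2'-I)=\{0\}$ by the definition of $F$.

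\textbf{Step 3: splitting $W_{\mathcal A}$.} Consider
\[
\mathcal{M}_1:=E\oplus F\oplus F\oplus\cdots,\qquad \mathcal{M}_2:=E'\oplus F^\perp\oplus F^\perp\oplus\cdots,
\]
where $E'$ is the complementary fiber at the $0$-th coordinate. Since $A_0E=F$ and $A_nF=F$ for $n\ge1$, the subspace $\mathcal M_1$ is $W_{\mathcal A}$-invariant. On $\mathcal M_1$, $W_{\mathcal A}$ is the shift with weights $A_0|_E:E\to F$ followed by identities, i.e.\ it is flat. This is the summand $W_{\mathcal A_1}$ (after identifying $F$ with $E$ through a unitary on the fibers, which is harmless by Lambert's theorem).

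\textbf{Step 4: the complement.} On $\mathcal M_2$, the weights are $A_0$ restricted to the complementary fiber, followed by $A_n'$ on $F^\perp$. Hyponormality gives $A_n'^2\le A_{n+1}'^2$. Strictness at every stage follows from the local theorem applied in contrapositive form. If $A_{n+1}'y=A_n'y$ for some $y\ne0$ in $F^\perp$ and some $n\ge1$, propagation forces $A_2y=A_1y$, i.e.\ $y\in F$, a contradiction. The same holds at $n=0$ via the backward half of the local theorem, since $A_1y=A_0y$ would propagate to $A_2y=y$. Hence $W_{\mathcal A_2}$ has a strictly increasing weight sequence, in the sense that no nonzero vector is fixed between consecutive weights.

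\textbf{Main obstacle.} The delicate point is Step 3 at the $0$-th coordinate: $\mathcal M_1^\perp$ must itself be $W_{\mathcal A}$-invariant. This requires $A_0(E^\perp)\subseteq F^\perp$, which is equivalent to $A_0^2E\subseteq E$, i.e.\ $E$ reducing $A_0^*A_0=A_0^2$. Taking $E'=A_0^{-1}(F^\perp)$ always gives an invariant complement, but in general only a skew one. I would first try to extract $A_0^2$-invariance of $E$ from the $2$-hyponormality block at $n=1$ in Theorem~\ref{2so-shift}, with $A_1=I$:
\[
\begin{pmatrix} I-A_0^2 & A_2^2-A_0^2\\ A_2^2-A_0^2 & A_2A_3^2A_2-A_0^2\end{pmatrix}\ge0,
\]
tested on vectors in $F$ and $F^\perp$. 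A quick check shows that $(x,-x)$ with $x\in F$ lies in the kernel of this block, so it yields no constraint there. If this route fails, the fallback is to pass to a unitarily equivalent shift, replacing $A_0$ by $A_0U^*$ for a suitable unitary $U$ on the first fiber, so that the two invariant pieces become orthogonal. This is the step where I expect the real work, and possibly a reinterpretation of $E^\perp$ as $A_0^{-1}(F^\perp)$, to be needed.
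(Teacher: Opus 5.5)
\textbf{Verdict: genuine gap, and it cannot be closed.} The paper gives no argument for this corollary beyond the local propagation theorem. Your Steps 1--2 are exactly what that theorem yields, and they are correct. Since $A_1=I$, every $x\in F=\ker(A_2-I)$ satisfies $A_2x=A_1x=x$, which is precisely the fixed-vector situation that theorem's proof handles. Hence $A_n|_F=I_F$ and $F$ reduces $A_n$ for all $n\ge1$. Also, $\mathcal M_1=E\oplus F\oplus F\oplus\cdots$ is invariant and the restriction of $W_{\mathcal A}$ to it is flat.

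\textbf{Step 3 fails, not just lacks proof.} As you suspected, the orthogonal splitting at coordinate $0$ needs $A_0^2F\subseteq F$, equivalently $E=F$. Nothing in $2$-hyponormality forces this. Among the blocks in Theorem~\ref{2so-shift}, only two involve $A_0$:
\begin{itemize}
\item the $n=0$ block, which with $A_1=I$ reduces to $A_2^2\ge I$;
\item the $n=1$ block, which reads $\begin{pmatrix} I & A_2^2\\ A_2^2 & A_2A_3^2A_2\end{pmatrix}\ge\begin{pmatrix} A_0^2 & A_0^2\\ A_0^2 & A_0^2\end{pmatrix}$, i.e.\ only an upper bound on $A_0^2$.
\end{itemize}
That is why testing on $(x,-x)$ gave you nothing.

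\textbf{Counterexample.} Take $p=2$, $A_1=I$, and $A_n=\mathrm{diag}(1,a_n)$ for $n\ge2$ with $a_n^2=\frac{3(n+1)}{2(n+2)}$. Along $e_2$ this is a rescaled Bergman shift, so the blocks with $n\ge2$ are positive; along $e_1$ they vanish. Set $A_0^2=\frac14\begin{pmatrix}2&1\\1&2\end{pmatrix}$. A direct computation shows the $n=1$ block is positive if and only if $A_0^2\le \mathrm{diag}(1,\tfrac{27}{32})$. This holds, since the difference has positive diagonal and determinant $\tfrac{7}{64}$. So $W_{\mathcal A}$ is $2$-hyponormal. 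Here $F=\mathbb C e_1$, while $E=\mathbb C A_0^{-1}e_1\neq F$, because $e_1$ is not an eigenvector of $A_0$. Hence $A_0E=F\not\subseteq E$, and $\ell^2(E)$ is not even invariant.

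\textbf{Your fallback also fails.} The projection onto a reducing subspace commutes with $W_{\mathcal A}^{*}$, so it leaves each $\ker W_{\mathcal A}^{*k}$ invariant and therefore commutes with the projections onto the fibres $\mathcal H_n$. Invertibility of the weights and finite dimensionality then force every reducing subspace to have the form $\ell^2(M)$, with $M$ reducing every $A_n$. In the example only $M=\{0\}$ and $M=\mathbb C^2$ reduce both $A_0$ and $A_2$. So $W_{\mathcal A}$ is irreducible, yet it is neither flat nor strictly increasing, since $A_2e_1=A_1e_1$. Unitary equivalence preserves reducibility, so replacing $A_0$ by $A_0U^*$ cannot help.

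\textbf{What is needed.} The corollary needs an extra hypothesis such as $A_0P_F=P_FA_0$. Commuting weights would suffice, and the paper does impose that in its cubic analogue. Under it, $E=F$, $\ell^2(F)$ reduces $W_{\mathcal A}$, and your Steps 3--4 go through.

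\textbf{A smaller point in Step 4.} You apply the local theorem to an arbitrary $y$ with $A_{n+1}y=A_ny$. The paper's proof only treats vectors with $A_nx=A_{n-1}x=x$. In the commuting case this does no harm, because the shift then splits into scalar $2$-hyponormal shifts.
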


\begin{remark}
  It is easy to observe that local propagation is more general than global propagation. \ The authors proved in \cite[Theorem 4.7]{EC24} a global propagation for subnormal operator-valued weighted shifts, but the proof is also valid for $2$--hyponormal operator-valued weighted shifts. \ Therefore, it is natural to ask whether local propagation also holds, in general, in the infinite-dimensional case.
\end{remark}

\section{Cubically hyponormal operator-valued weighted shifts}  
    Recall that a bounded operator $T\in\mathbf{B}(\mathcal{H})$ is said to be cubically hyponormal if $T + \lambda T^2+\mu T^3$ is hyponormal for all complex numbers $\lambda$ and $\mu$. \ Since cubically hyponormal operators are quadratically hyponormal, it follows that a cubically hyponormal weighted shift $W_{\mathcal A}$ such that $A_n=A_{n+1}$ for some $n\ge 1$ is automatically flat. \
    In \cite{Cur90}, examples of non-flat quadratically hyponormal scalar weighted shifts with $\alpha_0=\alpha_1$ were given. \ On the other hand, it was shown in \cite{cub} that a cubically hyponormal weighted shift $W_\alpha$ such that  $\alpha_0=\alpha_1$ is necessarily flat. \ In the next result, we obtain a characterization of cubically hyponormal matrix-valued weighted shifts.
    \begin{prop}
         Let  $W_{\mathcal A}$  be a matrix-valued weighted shift. \ Then $W_{\mathcal A}$ is cubically hyponormal if and only if for every  $s,t \in {\mathbb C}$, the pentadiagonal matrix $ M(s,t)$ is nonnegative, with  
\begin{equation}\label{pocub} 
M(s,t)=: \begin{pmatrix}
    D_{0}  &R^*_{0} &S^*_{0}       & 0 &0     &0       & \cdots &0\\
     R_{0} & D_{1}  & R^*_{1}      &S^*_{1 }&0  &0        &\cdots        &0 \\
    S_{0} & R_{1 } & D_{2} &R^*_{2} &S^*_{2 }& 0& \cdots & 0  \\
    0 & S_{1} & R_{2 } & D_{3} &R^*_{3} &S^*_{3 }& \cdots  & 0 \\
    \vdots &\vdots     & \vdots &\vdots&\vdots&\vdots&\ddots& \vdots
\end{pmatrix},
\end{equation}
and where 
 $$
 \begin{array}{lll}
 D_n & = A_n^2-A_{n-1}^2 +s^2(A(n,1)-A(n-1,-1)) + t^2(A(n,2)-A(n-1,-2)) &  \vspace{4pt}\\
     R_n &= s(A_{n+1}^2A_n-A_nA_{n-1}^2)+st(A(n+1,1)A_n-A_nA(n-1,-1))&  \vspace{4pt}\\S_n &= t(A_{n+2}^2A_{n+1}A_n-A_{n+1}A_nA_{n-1}^2)&  \\
 \end{array}
 $$
 with the extended notation $A(n,-k) := A_n\cdots A_{n-(k-1)}A_{n-k}^2A_{n-(k-1)}\cdots A_{n}$.     
   \end{prop}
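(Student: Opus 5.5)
Throughout, write $W:=W_{\mathcal A}$, and set $T_{\lambda,\mu}:=W+\lambda W^2+\mu W^3$ for $\lambda,\mu\in\mathbb{C}$. By definition, $W$ is cubically hyponormal exactly when
$$
\langle [T_{\lambda,\mu}^*,T_{\lambda,\mu}]X,X\rangle\ge 0
$$
for all $X\in\ell^2(\mathcal H)$ and all $\lambda,\mu\in\mathbb{C}$. So the plan is to compute $[T_{\lambda,\mu}^*,T_{\lambda,\mu}]$ explicitly in block form relative to $\ell^2(\mathcal H)=\bigoplus_n \mathcal H_n$, and to identify it with $M(s,t)$.

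First, expand the commutator by bilinearity:
$$
[T^*,T]=\sum_{i,j=1}^3 \bar c_i c_j\,[W^{*i},W^j],\qquad c_1=1,\ c_2=\lambda,\ c_3=\mu .
$$
The operator $W^{*i}W^j$ maps $\mathcal H_n$ into $\mathcal H_{n+j-i}$, so $[W^{*i},W^j]$ is supported on the $(j-i)$-th block diagonal. Only $|i-j|\le 2$ can occur, which gives the pentadiagonal shape. Next, compute each block using $W^j|_{\mathcal H_n}=A_{n+j-1}\cdots A_n$ and the positivity (self-adjointness) of the $A_k$:

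\begin{itemize}
\item For $i=j$, the $n$-th diagonal block of $W^{*k}W^k-W^kW^{*k}$ is $A(n,k-1)-A(n-1,-(k-1))$, with the convention that terms with negative indices vanish. This produces $D_n$, with coefficients $|c_1|^2=1$, $|\lambda|^2$ and $|\mu|^2$.
\item For $|i-j|=1$, the pairs $(1,2)$ and $(2,3)$ contribute. For example, $W^{*}W^{2}-W^{2}W^{*}$ restricted to $\mathcal H_n\to\mathcal H_{n+1}$ equals $A_{n+1}^2A_n-A_nA_{n-1}^2$. These produce $R_n$ with coefficients $\lambda$ and $\bar\lambda\mu$.
\item For $|i-j|=2$, only the pair $(1,3)$ contributes, giving $S_n=\mu(A_{n+2}^2A_{n+1}A_n-A_{n+1}A_nA_{n-1}^2)$.
\end{itemize}

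Then reduce the parameters to the stated form. The unitary $U=\bigoplus_n e^{in\theta}I$ satisfies $U^*WU=e^{i\theta}W$. Hence $T_{\lambda,\mu}$ is unitarily equivalent to $e^{i\theta}T_{\lambda e^{i\theta},\,\mu e^{2i\theta}}$, and the commutator is invariant under the unimodular factor $e^{i\theta}$. This lets one rotate away one phase. A further diagonal unitary conjugation $\operatorname{diag}(\omega_n I)$ of the pentadiagonal matrix multiplies the off-diagonal blocks by phases. Together, these should allow replacing $|\lambda|^2,|\mu|^2,\lambda,\bar\lambda\mu,\mu$ by $s^2,t^2,s,st,t$, matching the normalisation in the statement with $s,t$ playing the role of the (reduced) parameters. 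Once this is done, positivity of $[T^*,T]$ for all $\lambda,\mu$ is equivalent to positivity of $M(s,t)$ for all $s,t$, which is the claim.

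I expect the main obstacle to be this parameter reduction. With the single rotation $\theta$ one can make only one of $\lambda,\mu$ real, so a second degree of freedom is needed: either the diagonal gauge $\omega_n$, or simply reading $s^2,t^2$ as $|s|^2,|t|^2$ and $s,st$ as $s,\bar s t$ in the statement. I would check carefully which convention makes the equivalence exact. The block computations themselves are routine bookkeeping with the notation $A(n,k)$ and $A(n,-k)$, and the positivity is then simply the statement that the infinite block matrix defines a nonnegative operator on $\ell^2(\mathbb{C}^p)$.
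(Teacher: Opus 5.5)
Your blockwise computation is the same as the paper's proof. The paper's proof consists only of evaluating the self-commutator of $W_{\mathcal A}+sW_{\mathcal A}^2+tW_{\mathcal A}^3$ on the vectors $x\delta_{\cdot,n}$. Your block formulas are correct: $D_n$ carries the coefficients $1,|\lambda|^2,|\mu|^2$, and
$R_n=\lambda(A_{n+1}^2A_n-A_nA_{n-1}^2)+\bar\lambda\mu\,(A(n+1,1)A_n-A_nA(n-1,-1))$,
$S_n=\mu(A_{n+2}^2A_{n+1}A_n-A_{n+1}A_nA_{n-1}^2)$.

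The step that fails is the proposed reduction to real parameters. Both symmetries you invoke leave $\bar\lambda^2\mu$ unchanged:
\begin{itemize}
\item the rotation sends $(\lambda,\mu)\mapsto(e^{i\theta}\lambda,e^{2i\theta}\mu)$;
\item a gauge $V=\mathrm{diag}(\omega_n I)$ multiplies the whole block $R_n$ by the single phase $\bar\omega_{n+1}\omega_n$, so it cannot change the relative phase $\bar\lambda\mu/\lambda$ between the two summands of $R_n$.
\end{itemize}
So coefficients of the form $s,\,st$ with $s,t\in\mathbb{R}$ can be reached only when $\bar\lambda^2\mu\in\mathbb{R}$. This route therefore cannot show that positivity of $M(s,t)$ for real $s,t$ implies cubic hyponormality.

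Drop the reduction and commit to your second reading. For complex $s,t$ the entries must be $|s|^2,|t|^2$ in $D_n$, $s$ and $\bar s t$ in $R_n$, and $t$ in $S_n$. With that convention, $M(s,t)$ is exactly the self-commutator, so your computation is already a complete proof.

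Read literally (complex $s,t$ with $s^2,t^2,st$), the statement is false. Take the unweighted shift $A_n\equiv I$, which is subnormal. Then $D_0=(1+s^2+t^2)I=-3I$ at $s=2i$, $t=0$.

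The paper's proof has the same blind spot. It writes the adjoint of $W_{\mathcal A}+sW_{\mathcal A}^2+tW_{\mathcal A}^3$ as $W_{\mathcal A}^*+sW_{\mathcal A}^{*2}+tW_{\mathcal A}^{*3}$, i.e.\ it tacitly takes $s,t$ real. This does no harm later in the paper, because Theorem~\ref{cubl} only uses the necessity direction at real $s,t$.
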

   \begin{proof}
        It suffices to compute $ [W_{\mathcal A}^*+sW_{\mathcal A}^{*2}+tW_{\mathcal A}^{*3}, W_{\mathcal A}+sW_{\mathcal A}^{2}+tW_{\mathcal A}^{3}](x_n) $ when $x_n=(x\delta_{i,n})_{i\ge 0}$  for arbitrary $x\in {\mathcal H}$ and ${n\ge 0}$. 
    \end{proof} 
   
\subsection{{\bf Forward propagation for cubically hyponormal matrix-valued weighted shifts}.}

A local (forward) propagation phenomenon for cubically hyponormal operator-valued weighted shifts is shown in the next theorem. \ To analyze a certain determinant, we will need the following result from \cite[Proposition 2]{schmidt}.

\begin{prop}\label{ppos}
    A polynomial function $f(x)= a_0x^3+a_1x^2+a_2x+a_3$ is positive on ${\mathbb R}_+$ if and only if one of the following cases holds:
    \begin{enumerate}
        \item[(1)] \ $a_0>0, a_1\ge 0$, $a_2\ge 0$ and $  a_3\ge 0$;
        \item[(2)] \ $a_0>0, a_3\ge 0$ and $\delta_2=: 4a_0a_2^3+4a_1^3a_3+27a_0^2a_3^2-a_1^2a_2^2-18a_0a_1a_2a_3 \ge 0$.
    \end{enumerate}
\end{prop}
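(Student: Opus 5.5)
The plan is to read the condition $\delta_2 \ge 0$ through the discriminant. The discriminant of $f(x)=a_0x^3+a_1x^2+a_2x+a_3$ is
\[
\Delta = 18a_0a_1a_2a_3-4a_1^3a_3+a_1^2a_2^2-4a_0a_2^3-27a_0^2a_3^2 ,
\]
so $\delta_2=-\Delta$. Condition (2) therefore says that either $f$ has exactly one real root and a pair of non-real conjugate roots ($\Delta<0$), or $f$ has a repeated root ($\Delta=0$).

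Throughout, "positive on $\mathbb{R}_+$" is read as $f(x)\ge 0$ for all $x\ge 0$. The leading coefficient is part of the hypothesis: we take $f$ to be a genuine cubic with $a_0>0$. This is forced anyway, since if $a_0<0$ then $f(x)\to-\infty$ as $x\to+\infty$.

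\textbf{Sufficiency.} Case (1) is immediate, because each monomial is nonnegative on $\mathbb{R}_+$. For case (2), we have $a_0>0$ and $a_3=f(0)\ge 0$, and we split on the sign of $\Delta$.
\begin{itemize}
\item If $\Delta<0$, write $f(x)=a_0(x-r)|x-z|^2$ with $r\in\mathbb{R}$ and $z\notin\mathbb{R}$. Then $f(0)=-a_0 r|z|^2\ge 0$ forces $r\le 0$. Hence $x-r\ge 0$ for every $x\ge 0$, and $f\ge 0$ on $\mathbb{R}_+$.
\item If $\Delta=0$, all roots are real and $f(x)=a_0(x-r)(x-s)^2$. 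Then $f(0)=-a_0rs^2\ge 0$ gives either $r\le 0$ or $s=0$. In the first case $f\ge 0$ on $\mathbb{R}_+$ directly. In the second case $f=a_0x^2(x-r)$. If also $r\le 0$ we are done; otherwise $f$ would be negative on $(0,r)$. To exclude this, observe that when $s=0$ the factor $x^2$ gives $a_3=a_2=0$, and then $\delta_2=0$ automatically. So this subcase must be discarded by hand, or the statement must be read with the implicit extra requirement that $a_2\ge 0$ in this degenerate situation, or via the equivalent root description.
\end{itemize}
I expect this degenerate boundary case to be the main delicate point. I would verify it against \cite{schmidt}'s precise formulation, since a multiple root at $0$ is where the discriminant alone stops controlling signs.

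\textbf{Necessity.} Suppose $f\ge 0$ on $\mathbb{R}_+$ and $a_0>0$. Then $a_3=f(0)\ge 0$.
\begin{itemize}
\item If $\Delta\le 0$, condition (2) holds.
\item If $\Delta>0$, $f$ has three distinct real roots, and $f$ changes sign at each of them. Nonnegativity on $(0,\infty)$ therefore forbids any root in $(0,\infty)$, so all three roots $r_1,r_2,r_3$ are $\le 0$. Vieta's formulas then give
\[
a_1=-a_0\textstyle\sum r_i\ge 0,\qquad a_2=a_0\textstyle\sum_{i<j} r_ir_j\ge 0,\qquad a_3=-a_0r_1r_2r_3\ge 0,
\]
which is case (1).
\end{itemize}
This completes the plan. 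The only step needing real care is the repeated-root-at-zero subcase identified above.
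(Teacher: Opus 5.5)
The paper gives no proof of this proposition; it only quotes it from the cited reference. So there is no argument of the paper to compare with, and your discriminant-plus-Vieta approach has to stand on its own. The parts that work are correct. For $a_0>0$, necessity is complete: $a_3=f(0)\ge 0$; the case $\Delta\le 0$ is (2); and $\Delta>0$ forces three simple roots in $(-\infty,0]$, hence (1) by Vieta. Sufficiency is also correct for (1), and for (2) whenever $\Delta<0$ or the double root is nonzero.

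The subcase you flagged, however, is not a delicate point to check against the source. It is a counterexample, and the statement as written is false. For $f(x)=x^3-x^2$ one has $a_0=1>0$, $a_3=0$ and $\delta_2=0$ (every term of $\delta_2$ contains $a_2$ or $a_3$), so (2) holds, yet $f(1/2)=-1/8<0$. Sufficiency of (2) fails exactly on the family $a_2=a_3=0$, $a_1<0$. You should therefore say outright that the equivalence cannot be proved as stated, rather than hedge.

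Your suggested fix does not work either. In that subcase $a_2=0$ automatically, so ``requiring $a_2\ge 0$'' is vacuous; what is actually needed there is $a_1\ge 0$, i.e.\ condition (1). A correct version adds $(a_2,a_3)\neq(0,0)$ to (2):
\begin{itemize}
\item If $a_3>0$, your root argument gives $r<0$.
\item If $a_3=0\neq a_2$, then $\delta_2=a_2^2(4a_0a_2-a_1^2)\ge 0$ forces $a_2>0$ and $a_1^2\le 4a_0a_2$, so $f=x(a_0x^2+a_1x+a_2)\ge 0$.
\item Conversely, if $a_2=a_3=0$, then $f=x^2(a_0x+a_1)\ge 0$ on $\mathbb{R}_+$ forces $a_1\ge 0$, which is (1).
\end{itemize}

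Likewise, $a_0\neq 0$ is not ``forced''; it must be assumed. Both $f\equiv 1$ and $f(x)=x^2$ are nonnegative on $\mathbb{R}_+$, while neither satisfies (1) or (2). With these two amendments, and with ``positive'' read as ``nonnegative'' as you do, your argument is a complete proof. The paper only uses the necessity direction, in the proof of Theorem~\ref{cubl}, and that is exactly the direction that survives (for $a_0>0$).
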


\begin{thm}\label{cubl}
    Let $W_{\mathcal A}$ be a cubically hyponormal matrix-valued weighted shift such that $A_{k}x=A_{k+1}x $ for some unit vector $x\in {\mathcal H}$ and some $k\ge 1$. \ Then
     $A_nx=A_{k}x $ for every $n\ge k$.
\end{thm}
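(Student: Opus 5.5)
The plan is to argue by induction on $n$. It suffices to show that the hypothesis $A_kx=A_{k+1}x$ forces $A_{k+2}x=A_{k+1}x$; iterating with $k$ replaced by $k+1$ then gives $A_nx=A_kx$ for every $n\ge k$.

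\emph{First reduction.} Since $W_{\mathcal A}$ is hyponormal, $A_{k+1}^2-A_k^2\ge 0$. Moreover $\langle (A_{k+1}^2-A_k^2)x,x\rangle=\|A_{k+1}x\|^2-\|A_kx\|^2=0$, so $(A_{k+1}^2-A_k^2)x=0$. This gives $A_{k+1}^2x=A_k^2x$ as well as $A_{k+1}x=A_kx$. Dually, the target $A_{k+2}x=A_{k+1}x$ will follow, exactly as at the end of the proof of the local propagation theorem for $2$--hyponormal shifts, once we show $\langle (A_{k+2}^2-A_{k+1}^2)z,z\rangle=0$ for a suitable vector $z$ built from $x$ (namely $z=x$ or $z=A_{k+1}x$). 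Indeed, positivity of $A_{k+2}^2-A_{k+1}^2$ then kills the vector, and invertibility of $A_{k+2}+A_{k+1}$-type factors (or the square-root argument with $A_{k+2}+I$ after normalizing) removes the squares.

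\emph{Main step.} We use the characterization (\ref{pocub}): $M(s,t)\ge 0$ for all $s,t$. We compress $M(s,t)$ to the three consecutive block positions $k,k+1,k+2$, or to $k+1,k+2,k+3$, and test it against vectors of the form $(a\,x,\ b\,A_{k+1}x,\ c\,w)$ with scalars $a,b,c$ and a free vector $w$. Using the identities $A_{k+1}x=A_kx$ and $A_{k+1}^2x=A_k^2x$, many terms in $D_{k+1}$, $R_k$, $R_{k+1}$ and $S_k$ collapse. The hyponormality part of $D_{k+1}$ vanishes on $x$, so the $(k+1)$ diagonal entry reduces to $s^2\delta_1+t^2\delta_2$, where $\delta_1,\delta_2\ge0$ are explicit quadratic forms involving $A_{k+2}^2-A_{k+1}^2$ and $A_{k+3}^2$. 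The off-diagonal entries paired with this slot are multiples of $s$, $st$ and $t$ times the same quantity $\eta:=\langle (A_{k+2}^2-A_{k+1}^2)A_{k+1}x,\cdot\rangle$. Taking the determinant of the resulting $3\times 3$ scalar matrix and restricting to a suitable ray, for instance $s$ fixed with $u=t^2$ or $u=|s|^2/|t|$, gives a cubic polynomial $f(u)=a_0u^3+a_1u^2+a_2u+a_3$ that must be nonnegative on $\mathbb R_+$.

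\emph{Concluding, and the obstacle.} We then invoke Proposition \ref{ppos}. In case (1) the sign conditions, and in case (2) the discriminant condition $\delta_2\ge 0$, should force the lowest-order coefficient involving $\eta$ to vanish. This is the scalar mechanism of \cite{cub}, where the leading positive diagonal terms cannot dominate a cross term of lower order in $s,t$. It yields $(A_{k+2}^2-A_{k+1}^2)A_{k+1}x=0$ or the analogous identity at $x$, and the first reduction then finishes the argument. The main obstacle is this determinant step. One has to choose the ray in the $(s,t)$-plane and the test vectors so that the matrix products $A(n,\pm j)$ reduce to genuine scalars. This is where the finite-dimensional hypothesis and the relations $A_kx=A_{k+1}x$, $A_k^2x=A_{k+1}^2x$ are used. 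One also has to check carefully that $a_0>0$ and $a_3\ge0$, so that Proposition \ref{ppos} applies and the discriminant condition becomes incompatible with $\eta\ne0$. If the three-slot compression is not sharp enough, I would fall back to the four-slot compression $k,\dots,k+3$ and take a Schur complement with respect to the well-understood slot $k$ before forming the cubic.
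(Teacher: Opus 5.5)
Your plan is the paper's route: compress $M(s,t)$ to three consecutive slots, test on scalar multiples of fixed vectors, read the $3\times3$ determinant as a cubic in $u=s^2$, and invoke Proposition~\ref{ppos}. But the sentence ``should force the lowest-order coefficient involving $\eta$ to vanish'' is the entire content of the theorem, and you never supply it. The predicted shape of the entries is also wrong. After restricting to $\bigoplus_{n\ge k}\mathcal H$ and normalizing $A_0x=A_1x=x$, as the paper does, the off-diagonal entries are $s+st\|A_2x\|^2$, $t\|A_2x\|^2$ and $s(\|A_2x\|^2-1+t\|A_3A_2x\|^2)$. They are not multiples of one quantity $\eta$.

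More seriously, on this window the step cannot be completed at all. Put $\Gamma=\|A_3A_2x\|^2(\|A_2x\|^2-1)$. The determinant is $\Gamma(u-t)^2$ plus terms of degree $\ge 3$ in $(u,t)$. So the sign change of $p_2(t)=-2\Gamma t+\cdots$ only places small $t>0$ in alternative (2) of Proposition~\ref{ppos}. There $\delta_2(t)=4\Gamma^3\|A_4A_3A_2x\|^2(\|A_2x\|^2-1)\,t^3+O(t^4)$ is positive, and nothing is contradicted.

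Concretely, suppose the weights from slot $k$ on are $I,I,\sqrt2\,I,\sqrt2\,I,\dots$. Then
\[
\det=4(u-t)^2+u\,(6u^2-20tu+22t^2)+4t^2(3u^2-4tu+6t^2)+24t^4u+64t^6,
\]
and every summand is nonnegative, since both quadratic forms have negative discriminant. The leading $2\times2$ minor is $2u^2+u(1-2t)^2+4t^2+8t^4$. So this compression is positive for all real $s,t$. Because the weights are scalar, this holds for every choice of test vectors, including your free $w$. Yet $\|A_2x\|^2=2$.

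Hence no ray and no coefficient bookkeeping can extract $\|A_2x\|=\|A_1x\|$ from this window. A genuinely further ingredient is needed, such as another or a larger window, together with an actual computation. The paper's own argument stalls at exactly this point. It applies one alternative of Proposition~\ref{ppos} uniformly in $t$, but for small $t<0$ alternative (1) holds and says nothing about $\delta_2$. Moreover, $\lim_{t\to0^+}\delta_2(t)/t^3$ is $c$, not $-c$.

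Your first reduction also needs repair. For non-commuting positive matrices, $A_{k+2}^2z=A_{k+1}^2z$ does not give $A_{k+2}z=A_{k+1}z$. Take $A_{k+1}=\mathrm{diag}(1,2)$, $A_{k+2}=(A_{k+1}^2+ee^*)^{1/2}$ with $e=(1,1)$, and $z=(1,-1)$. Also, $A_{k+2}^2-A_{k+1}^2$ does not factor through $A_{k+2}+A_{k+1}$.

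The normalization $A_{k+1}x=x$ is not free either, but iterating your own observation justifies it. If $A_{k+1}y=A_ky$, then $(A_{k+1}^2-A_k^2)y=0$ gives $A_{k+1}(A_ky)=A_k(A_ky)$. So $A_{k+1}=A_k$ on $\operatorname{span}\{A_k^jx:j\ge0\}$. Hence each spectral component $x_\lambda$ of $x$ relative to $A_k$ is a common eigenvector of $A_k$ and $A_{k+1}$.

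Now restrict to $\bigoplus_{n\ge k}\mathcal H$ (cubic hyponormality passes to this invariant subspace) and scale by $\lambda^{-1}$. This puts you in the case $A_0x_\lambda=A_1x_\lambda=x_\lambda$. There $\langle(A_2^2-A_1^2)x_\lambda,x_\lambda\rangle=0$ does yield $A_2x_\lambda=x_\lambda$, and summing over $\lambda$ returns the statement for $x$.
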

\begin{proof}
 For the forward propagation, and without any loss of generality, we take $A_0=I$. \ Suppose that $A_1x=x$ and let us show that $A_2x = x$. 

Computing  the compression of $ M(s,t)$ on ${\mathcal H}\oplus {\mathcal H}\oplus{\mathcal H}$, it follows that 
\begin{equation}\label{cube}
 \begin{pmatrix}
    D_{0}  &R^*_{0} &S^*_{0}    \\
     R_{0} & D_{1}  & R^*_{1}     \\
    S_{0} & R_{1 } & D_{2}
\end{pmatrix}\ge 0.  
\end{equation}
Using vectors of the form $(ax,bx,cx)$ in Equation \ref{cube} (with $a, b, c in \mathbb{C}$), we obtain 
$$
\begin{pmatrix}
 1+s^2  +t^2\|A_2x\|^2&  s +st\|A_2x\|^2   &   t\|A_2x\|^2    \vspace{4pt} \\
 s +st\|A_2x\|^2  &  s^2\|A_2x\|^2+t^2\|A_3A_2x\|^2 & \langle R_1^*x,x\rangle \vspace{4pt} \\
   t\|A_2x\|^2             & \langle R_1x,x\rangle      & \langle D_2x,x\rangle \\
 \end{pmatrix} \ge 0,
 $$ 
where 
$$
\langle Sx,x\rangle := \|A_2x\|^2-1 +s^2(\|A_3A_2x\|^2-1)+t^2\|A_4A_3A_2x\|^2
$$
and
$$
\langle  R_1x,x\rangle :=\langle  R_1^*x,x\rangle =s(\|A_2x\|^2-1+t\|A_3A_2x\|^2).
$$
In particular, $P(s,t)$, the determinant of the previous $3 \times 3$ matrix, is nonnegative for all $s,t\in\mathbb{R}$. \ We now expand $P(s,t)$, that is, 
$$
P(s,t)=p_0(t)s^6+p_1(t)s^4+p_2(t)s^2+p_3(t),
$$
where each $p_i(t)$ is a polynomial ($i=0, 1, 2, 3$). \ We readily obtain
$$  \begin{array}{lll}
     p_0(t)& = & \|A_2x\|^2(\|A_3A_2x\|^2-1); \vspace{4pt}\\
      p_1(t)&=&\Gamma + 2(\|A_3A_2x\|^2-2\|A_2x\|^2\|A_3A_2x\|^2+\|A_2x\|^2 )t + q_1(t)t^2; \vspace{4pt}\\
      p_2(t)& = &-2\Gamma t + \|A_2x\|^2\|A_4A_3A_2x\|^2 -\|A_2x\|^2]t^2+ q_2(t)t^4 \vspace{4pt}\\ 
      p_3(t)& =& \Gamma t^2 +q_3(t)t^4, 
\end{array}
$$
where $\Gamma := \|A_3A_2x\|^2(\|A_2x\|^2-1)$ and where  $q_1(t), q_2(t)$ and $ q_3(t)$ are polynomials.

We now observe that $P(s,t)\ge 0$ for every $s,t \in {\mathbb R}$ if and only if either (1) or (2) in Proposition \ref{ppos} is satisfied. 
\begin{itemize}
    \item In the first case, $p_2(t)\ge 0 $ for every $t\in {\mathbb R}$ implies
		$$
		\lim\limits_{t\to 0^+}\frac{p_2(t)}{t}= - 2(\|A_2x\|^2-1)\ge  0.
		$$
		Since $A_2^2\ge I$, we obtain $\|A_2x\|^2-1=0$. \vspace{6pt}
		
    \item If instead (2) holds, and we expand $\delta_2(t)=ct^3 +\delta(t)t^4$, we obtain
$$
    \begin{array}{ll}
        c=& (\|A_2x\|^2-1)^3[16\|A_2x\|^2(\|A_3A_2x\|^2-1) + 4\|A_3A_2x\|^2(\|A_2x\|^2-1)\\
         & +4\|A_2x\|^2(\|A_4A_3A_2x\|^2-1)],
    \end{array}
$$
where $\delta(t)$ is a polynomial. \ We use $c\ge 0$ and $\lim\limits_{t\to 0^+}\frac{\delta_2(t)}{t^3}= -c \ge  0$, to deduce that $c=0$. \ Now, it is clear from  $c=0  $ that $\|A_2x\|^2-1=0$.
\end{itemize} 
Since $\|A_2x\|=1$ in both cases, we conclude that $A_2x=x$.  
\end{proof}

We now establish a straightforward consequence of Theorem \ref{cubl}. 

 \begin{coro}\label{cublcor}
    Let  $W_{\mathcal A}$  be a cubically hyponormal matrix-valued weighted shift, let $x\in {\mathcal H}$, and assume that $A_{0}x=A_{1}x$. \ Then $A_nx=A_{0}x $ for every $ n\ge 0$.
 \end{coro}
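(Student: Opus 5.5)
The plan is to reduce Corollary \ref{cublcor} to Theorem \ref{cubl} by first showing that $A_1x=A_2x$, and then letting the forward propagation of Theorem \ref{cubl} (applied with $k=1$) do the rest.

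\emph{Normalization.} Replacing $x$ by $x/\|x\|$, we may assume $x$ is a unit vector; the case $x=0$ is trivial. The idea is to repeat the determinant argument in the proof of Theorem \ref{cubl}, but one index lower. There, the hypothesis was $A_0=I$ and $A_1x=x$, and the conclusion was $A_2x=x$. Here the hypothesis is $A_0x=A_1x$.

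\emph{Why a direct shift does not work.} Shifting the indices is not automatic: the first block $D_0=A_0^2+s^2A(0,1)+t^2A(0,2)$ has no subtracted terms. Moreover, we cannot simply normalize $A_0=I$, since $A_0$ is the weight being compared.

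\emph{Change of vector.} Instead, we work with the vector $y=A_0x=A_1x$. Using $A_0x=A_1x$, the entries of the compression \eqref{cube} evaluated on vectors $(ax,bx,cx)$ become expressions in the norms $\|A_0x\|$, $\|A_2A_1x\|$, $\|A_3A_2A_1x\|$, $\|A_4A_3A_2A_1x\|$. The key simplification is that $D_1$ contains $A_1^2-A_0^2$, and $\langle (A_1^2-A_0^2)x,x\rangle=\|A_1x\|^2-\|A_0x\|^2=0$.

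\emph{Determinant analysis.} This should yield a $3\times3$ Hermitian matrix whose determinant $P(s,t)$ is nonnegative for all real $s,t$. Its structure is analogous to that in Theorem \ref{cubl}, with the role of $\|A_2x\|^2-1$ played by $\|A_2A_1x\|^2-\|A_1x\|^2$. Expanding $P(s,t)=\sum p_i(t)s^{2(3-i)}$ and applying Proposition \ref{ppos} in the variable $s^2$, we split into the two cases exactly as before:
\begin{itemize}
\item[(1)] the term $\lim_{t\to0^+}p_2(t)/t$ has sign opposite to $\|A_2A_1x\|^2-\|A_1x\|^2$;
\item[(2)] the leading coefficient $c$ in $\delta_2(t)=ct^3+O(t^4)$ is forced to be zero.
\end{itemize}
Combined with hyponormality ($A_2^2\ge A_1^2$, so $\|A_2y\|\ge\|A_1y\|$ in the relevant sense), either case gives equality.

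\emph{From equality to $A_2x=A_1x$.} The equality $\langle (A_2^2-A_1^2)x',x'\rangle=0$, for the appropriate vector $x'$, together with $A_2^2-A_1^2\ge0$, gives $(A_2^2-A_1^2)x'=0$. Here the vector is chosen so that this becomes $A_2^2x=A_1^2x$. The main obstacle is bookkeeping at this point: the cube-root-free relation must be turned into $A_2x=A_1x$. I would first try to arrange the test vectors so that the derived equality is directly $\|A_2x\|^2=\|A_1x\|^2$. If that fails, I would use the $2\times2$ block $D_1,R_1$ positivity to argue as in the proof of local propagation for the $2$--hyponormal case, where $R_1^*x$ vanishing forces $A_2^2A_1x=A_1^3x$.

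\emph{Conclusion.} Once $A_2x=A_1x$ is obtained, Theorem \ref{cubl} with $k=1$ gives $A_nx=A_1x=A_0x$ for all $n\ge1$. Together with the case $n=0$, which is the hypothesis, this proves the corollary.
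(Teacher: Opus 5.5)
Your outer architecture matches the paper's. The proof of Theorem~\ref{cubl} is actually carried out at the initial weight ($A_0=I$, $A_1x=x$, concluding $A_2x=x$), which is why the corollary is called straightforward, and finishing with Theorem~\ref{cubl} at $k=1$ is right.

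The gap is the one step with content, namely $A_2x=A_1x$. You are right that $A_0=I$ cannot simply be assumed. However, the replacement computation you describe is never carried out, and its announced shape is wrong. With $A_0\neq I$ and test vectors $(ax,bx,cx)$, the entries are not norms of the vectors you list. For instance, $\langle D_0x,x\rangle=\|A_0x\|^2+s^2\|A_1^2x\|^2+t^2\|A_2A_1^2x\|^2$ and $\langle R_0x,x\rangle=s\langle A_1^3x,x\rangle+st\langle A_2A_1^2x,A_2A_1x\rangle$. So nothing guarantees that $P(s,t)$ has the pattern $p_2(t)=-2\Gamma t+O(t^2)$, $\delta_2(t)=ct^3+O(t^4)$ on which both cases of Proposition~\ref{ppos} were decided.

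Your stand-in for $\|A_2x\|^2-1$, namely $\|A_2A_1x\|^2-\|A_1x\|^2$, has no sign. The inequality $A_2^2\ge A_1^2$ compares $A_2$ and $A_1$ on the same vector, so it only gives $\|A_2A_1x\|\ge\|A_1^2x\|$. Even if that quantity vanished, it would not give $A_2x=A_1x$, as you flag yourself.

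The fallback cannot work either. In the $2$-hyponormal proof, the off-diagonal term was forced to vanish because the diagonal entry $\|A_nx\|^2-\|A_{n-1}x\|^2$ of a positive $2\times2$ block was zero. Here the corresponding entry is $\langle D_1x,x\rangle=s^2\|A_2A_1x\|^2+t^2\|A_3A_2A_1x\|^2$, which is positive for $(s,t)\neq(0,0)$. So positivity of that block forces nothing on $R_1x$.

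The missing idea is a legitimate substitute for the normalization.
\begin{itemize}
\item Let $z\in K:=\ker(A_1-A_0)$. Then $\langle(A_1^2-A_0^2)z,z\rangle=0$ together with $A_1^2\ge A_0^2$ gives $A_1^2z=A_0^2z$, hence $A_1(A_0z)=A_0(A_0z)$.
\item So $K$ is invariant under the self-adjoint matrices $A_0$ and $A_1$, and they coincide on $K$. Hence $K$ has an orthonormal basis of vectors $e$ with $A_0e=A_1e=\lambda e$, $\lambda>0$.
\item For $p(z)=z+sz^2+tz^3$ we have $p(\lambda^{-1}W_{\mathcal A})=\lambda^{-1}(W_{\mathcal A}+s\lambda^{-1}W_{\mathcal A}^2+t\lambda^{-2}W_{\mathcal A}^3)$. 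So $\lambda^{-1}W_{\mathcal A}$ is still cubically hyponormal, and now $A_0e=A_1e=e$.
\item On $(ae,be,ce)$ the compression \eqref{cube} has literally the entries displayed in the proof of Theorem~\ref{cubl}, since those use only these two relations. They are polynomials in $\|A_2e\|^2$, $\|A_3A_2e\|^2$ and $\|A_4A_3A_2e\|^2$.
\item The lower bounds by $1$ for these three numbers, which $A_0=I$ gave for free, follow from $A_1^2\le A_2^2\le A_3^2\le A_4^2$ and Cauchy--Schwarz. For example, $\|A_2e\|^2\ge\|A_1e\|^2=1$ and $\|A_3A_2e\|^2\ge\|A_2^2e\|^2\ge\langle A_2^2e,e\rangle^2=\|A_2e\|^4$; the third bound follows the same way.
\end{itemize}
The paper's determinant analysis therefore runs unchanged and gives $A_2e=A_1e$ on a basis of $K$. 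By linearity $A_2x=A_1x$, and your appeal to Theorem~\ref{cubl} with $k=1$ then finishes the proof.
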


\subsection{{\bf Backward propagation for cubically hyponormal matrix-valued weighted shifts}.} \ In this subsection, we first pose the following conjecture and then prove a structural result. 
\begin{conjecture}\label{cubl0}
    Let $W_{\mathcal A}$ be a cubically hyponormal matrix-valued weighted shift, and let $x\in {\mathcal H}$. \ If   $A_{k}x=A_{k+1}x$ for some $k \ge 1$, then
     $A_nx=A_{1}x$ for every $ n\ge 0$.
\end{conjecture} 
 
\begin{remark}
Although we have been unable to settle Conjecture \ref{cubl0}, we present below the key steps needed for an affirmative answer.

Assume $A_2=I$ and $A_3x=x$; we need to show that $A_1x=x$. \ Before going further, we mention that, in view of the forward propagation property (already obtained), we have $A_4x=x$. 
\ Again, taking a suitable compression, it follows that 
\begin{center}
\small{\begin{equation}\label{pocubb} 
\begin{pmatrix}
    D_{1}  & R^*_{1}      &S^*_{1 }\\
     R_{1 } & D_{2} &R^*_{2} \\
    S_{1} & R_{2 } & D_{3}
\end{pmatrix}\ge 0. 
\end{equation}}
\end{center}
Applying the positivity in (\ref{pocubb}) to vectors of the form $(aA_1x,bx,cx)$ (where $a,b,c \in \mathbb{C}$ and $x \in \mathcal{H}$), we readily obtain 
\begin{center}
 $$
 \begin{pmatrix}
   \langle D_1A_1x,A_1x\rangle & \langle R^*_1x,A_1x\rangle &   \langle S_1x,A_1x\rangle  \\
 \langle R_1A_1x,x\rangle  &  \langle D_2x,x\rangle&  \langle R_2x,x\rangle  \\
   \langle S_1A_1x,x\rangle   & \langle R_2x,x\rangle   &  \langle D_3x,x\rangle
\end{pmatrix}\ge 0.
$$
\end{center}
Here
$$
\begin{array}{ll}
  \langle D_1A_1x,A_1x\rangle    & = \|A_1^2x\|^2-\|A_0A_1x\|^2+s^2 \|A_1^2 x\|^2+t^2 \|A_3A_1^2 x\|^2, \vspace{4pt}\\
     \langle D_2x,x\rangle    & =  (1-\|A_1x\|^2)+ s^2(1-\|A_0A_1x\|^2)+t^2, \vspace{4pt}\\
     \langle D_3x,x\rangle    & = s^2(1-\|A_1x\|^2)+ t^2(1-\|A_0A_1x\|^2), \vspace{4pt}\\
     \langle R_1A_1x,x\rangle  & =  s(\|A_1x\|^2 - \|A_0A_1x\|^2)+st\|A_1x\|^2, \vspace{4pt}\\
   \langle R_2x,x\rangle  & =  s(1-\|A_1x\|^2)+st(1-\|A_0A_1x\|^2), \vspace{4pt}\\
   \langle S_1A_1x,x\rangle  & = t(\|A_1x\|^2- \|A_0A_1x\|^2)
\end{array}
$$
In particular, the determinant of the matrix above, $Q(s,t)$, is nonnegative for all $s,t \in {\mathbb R}$. \  We expand $Q(s,t)$ to obtain:
$$
Q(s,t)= b_0(t)s^6+b_1(t)s^4+b_2(t)s^2+b_3(t),
$$
where
$$
\begin{array}{lll}
     b_0(t)& = & \|A_1^2 x\|^2(1-\|A_0A_1x\|^2)(1-\|A_1x\|^2), \vspace{4pt}\\
      b_1(t)&= &-(1-\|A_1x\|^2)\Gamma -2(1-\|A_1x\|^2)\Gamma_1t+q_1(t)t^2, \vspace{4pt}\\
      b_2(t)& = & 2(1-\|A_1x\|^2)\Gamma t + (1-\|A_1x\|^2)\Gamma_2 t^2+q_2(t)t^3, \vspace{4pt}\\
      b_3(t)& =&  -(1-\|A_1x\|^2)\Gamma t^2+ \Gamma_3 t^4 +  (1-\|A_1x\|^2)\|A_3A_1^2x\|^2t^6, \vspace{4pt}\\
      \delta_2(t)
      &=& 4(1-\|A_1x\|^2)^4\Gamma^3(\|A_0A_1x\|^2-\|A_1^2x\|^2) t^3+q_4(t)t^4,
\end{array}
$$
with $q_i(t)$ polynomials and 
$$
\begin{array}{ll}
\Gamma &:= (\|A_1x\|^2-\|A_0A_1x\|^2)^2-(1-\|A_0A_1x\|^2)(\|A_1^2x\|^2-\|A_0A_1x\|^2), \vspace{4pt}\\
\Gamma_1 &:=\|A_1x\|^2(\|A_1x\|^2-\|A_0A_1x\|^2)+\|A_1^2x\|^2(1-\|A_0A_1x\|^2), \vspace{4pt}\\ 
\Gamma_2 &:= 2\|A_1x\|^2(\|A_1x\|^2-\|A_0A_1x\|^2)+\|A_1^2x\|^2(1-\|A_0A_1x\|^2) \vspace{4pt}\\
& +(\|A_1^2x\|^2-\|A_0A_1x\|^2)\ge0, \vspace{4pt}\\
\Gamma_3 & := \|A_3A_1^2x\|^2(1-\|A_0A_1x\|^2)(1-\|A_1x\|^2) -\Gamma.\\
\end{array}
$$
We assume first that $(1-\|A_1x\|^2)\Gamma \ne 0$. \ Using $b_3(t)\ge 0$ for $t$ close to zero,  we get $-(1-\|A_1x\|^2)\Gamma>0$, and hence $\Gamma<0$. \ Now, as in the forward case, if $ (1-\|A_1x\|^2)\Gamma \ne 0$, then both $b_2$ and $\delta_2$ must change sign at zero, which contradicts $Q\ge 0$.\\
Thus, we may assume that $\Gamma= 0$ and, since either $\Gamma_1=0$ and $\Gamma_2=0$ imply that $1-\|A_1x\|^2= 0$, we will also take $\Gamma_1\ne 0$ and $\Gamma_2\ne 0$. \  Let us show that $1-\|A_1x\|^2= 0$. \ Seeking a contradiction, we suppose that $1-\|A_1x\|^2\ne 0$. \ Back in the expression of $Q(s,t)$, we will have   
$$
\begin{array}{lll}
     b_0(t)& = & \|A_1^2 x\|^2(1-\|A_0A_1x\|^2)(1-\|A_1x\|^2), \vspace{4pt}\\
      b_1(t)&= &-2\Gamma_1t+q_1(t)t^2, \vspace{4pt}\\
      b_2(t)& = &  (1-\|A_1x\|^2)\Gamma_2 t^2+q_2(t)t^3, \vspace{4pt}\\
      b_3(t)& = & \Gamma_3 t^4+ (1-\|A_1x\|^2)\|A_3A_1^2x\|^2t^6, \vspace{4pt}\\
      \delta_2(t)& = & -\|A_1^2x\|^2) t^3+q_4(t)t^4.
\end{array}
$$
To settle Conjecture \ref{cubl0}, what is needed is a careful and conclusive analysis of the sign behavior of the various quantities in the previously displayed identities.
\end{remark}

We now state a structural result for cubically hyponormal operators. \ Let $W_{\mathcal A}$  be a cubically hyponormal matrix-valued weighted shift with commuting weights such that $A_1= I$. \ For $ E= A_0^{-1}(ker(A_2-I))\subset {\mathbb C}^p$ and $E_0= ker(A_0 -I)$, we now derive, using Theorem \ref{cubl}, that $E_0\subset E_1$. \  Denote $E_1=A_0^{-1}(ker(A_2-I))\ominus E_0$ and $E_2=E^\perp$. \ Then

\begin{coro} Let $W_{\mathcal A}$ be a cubically hyponormal matrix-valued weighted shift such that  $A_1=I$. \ Under the previous notation,  
$W_{\mathcal A}= W_{\mathcal A_0}\oplus W_{\mathcal A_1}\oplus W_{\mathcal A_2}$ on 
$ {\ell}^2({\mathbb C}^p)= {\ell}^2(E_0)\oplus{\ell}^2(E_1)\oplus{\ell}^2(E_2)$, with 

\begin{enumerate} 
 \item $W_{\mathcal{A}_0}$ is the matrix-valued unweighted shift;
    \item $W_{\mathcal{A}_1}$ is flat;
    \item  $W_{\mathcal{A}_2}$ is associated with a strictly increasing weight sequence.
\end{enumerate} 
\end{coro}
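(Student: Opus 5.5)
The plan is to build the decomposition from the local propagation results and then check that each summand is invariant under every weight. The weights commute, and $A_1=I$. Set $E_0=\ker(A_0-I)$ and $E=A_0^{-1}(\ker(A_2-I))$. Because the $A_n$ are positive and pairwise commuting, they are simultaneously unitarily diagonalizable on $\mathbb{C}^p$. Every spectral subspace of one weight is therefore invariant under all the others, and each $A_n$ is reduced by it.

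First, the subspaces. If $x\in E_0$, then $A_0x=x=A_1x$, and Corollary \ref{cublcor} gives $A_nx=x$ for all $n\ge 0$. In particular $A_2x=x$. Since $A_0$ commutes with $A_2$, $\ker(A_2-I)$ is $A_0$-invariant, so $A_0^{-1}(\ker(A_2-I))=\ker(A_2-I)$. Hence $E_0\subset E$. Next take $y\in E$, so that $A_2y=y=A_1y$. Theorem \ref{cubl} with $k=1$ gives $A_ny=y$ for all $n\ge 1$. So $E=\bigcap_{n\ge1}\ker(A_n-I)$. This subspace reduces every $A_n$, and $A_0$ maps $E$ into $E$ because of commutativity. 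Consequently $E_1=E\ominus E_0$ and $E_2=E^\perp$ also reduce every weight. Here I use that $A_0$ is self-adjoint and preserves both $E$ and $E_0$, hence also preserves $E\ominus E_0$ and $E^\perp$.

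Second, the splitting of the shift. Since each $A_n$ is reduced by $E_0$, $E_1$ and $E_2$, the shift $W_{\mathcal A}$ is reduced by $\ell^2(E_i)$ for $i=0,1,2$, and $W_{\mathcal A}=\bigoplus_i W_{\mathcal A_i}$ with $\mathcal A_i=(A_n|_{E_i})_n$. On $E_0$ all weights equal $I$, so $W_{\mathcal A_0}$ is the unweighted shift. On $E_1$ we have $A_n|_{E_1}=I$ for every $n\ge1$, so $W_{\mathcal A_1}$ is flat. Moreover $A_0|_{E_1}\neq I$ on every nonzero vector, because $\ker(A_0-I)\cap E_1=0$.

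Third, strict increase on $E_2$. Restrictions of a cubically hyponormal shift to reducing subspaces remain cubically hyponormal, hence hyponormal, so $A_n^2\le A_{n+1}^2$ on $E_2$. Suppose some nonzero $x\in E_2$ satisfies $A_nx=A_{n+1}x$ for some $n$. Using the joint diagonalization, I can take $x$ to be a joint eigenvector lying in $E_2$. There are two cases. If $n\ge1$, Theorem \ref{cubl} propagates forward, giving $A_mx=A_nx$ for all $m\ge n$. If $n=0$, Corollary \ref{cublcor} gives $A_mx=A_1x=x$ for all $m$. In the case $n=0$ this forces $x\in E_0$, which contradicts $x\in E^\perp$. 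In the case $n\ge 1$, I want to conclude that $A_2x=x$, which would put $x\in E$, again a contradiction.

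The main obstacle is the case $n\ge1$. Concluding $A_2x=x$ requires backward propagation down to index $1$, and that is exactly Conjecture \ref{cubl0}. I would handle it in one of two ways. One option is to interpret ``strictly increasing'' in the weaker sense that no two consecutive weights agree on any vector from index $1$ onward, at the first index where equality is forced. The other is to use commutativity, which reduces everything on each joint eigenvector to a scalar weighted shift. For scalar cubically hyponormal shifts, backward propagation from $\alpha_n=\alpha_{n+1}$ down to $\alpha_1$ follows from quadratic hyponormality: Stampfli-type flatness for quadratically hyponormal scalar shifts with two equal weights $\alpha_n=\alpha_{n+1}$, $n\ge1$, is due to Curto. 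I would first try to invoke this scalar result on each joint eigenvector, which settles the commuting case without needing the general conjecture.
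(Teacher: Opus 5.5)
Your proof is correct once you commit to the second option in your last paragraph. For item~(3) it supplies an argument the paper does not give.

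\textbf{Items (1)--(2).} You follow the paper's indicated route. Corollary~\ref{cublcor} and Theorem~\ref{cubl} with $k=1$ give $E_0\subset E=\ker(A_2-I)=\bigcap_{n\ge1}\ker(A_n-I)$; the paper's ``$E_0\subset E_1$'' is a typo for this. Commutativity, which is part of the ``previous notation'', then makes $E_0$, $E_1$, $E_2$ reducing.

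\textbf{Item (3).} The paper offers no argument, and forward local propagation cannot supply one. Ruling out a nonzero $x\in E_2$ with, say, $A_3x=A_4x\neq A_2x$ is exactly the backward step of Conjecture~\ref{cubl0}. Your reduction closes this gap:
\begin{itemize}
\item pass to a joint eigenvector $x\in E_2$, so that $\ell^2(\mathbb{C}x)$ reduces $W_{\mathcal A}$ to a scalar cubically (hence quadratically) hyponormal shift;
\item the scalar two-equal-weights theorem then forces $\lambda_2=\lambda_1=1$, i.e.\ $x\in E$, a contradiction;
\item together with $\lambda_n\le\lambda_{n+1}$ from hyponormality, this makes $A_{n+1}-A_n$ positive definite on $E_2$.
\end{itemize}

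\textbf{Comparison.} The paper's route avoids diagonalizing the weights but, as it stands, cannot reach item~(3). Yours relies essentially on commutativity and in exchange is complete. The same reduction, combined with the scalar cubic result for $\alpha_0=\alpha_1$ quoted at the start of the cubic section, would also give the $E_0$ and $E$ identifications without Theorem~\ref{cubl}.

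\textbf{Clean-ups.}
\begin{itemize}
\item Drop your first option: it reinterprets the statement rather than proving it.
\item The case $n=0$ needs no corollary, since $A_0x=A_1x=x$ already means $x\in E_0$.
\item The scalar theorem for $\alpha_n=\alpha_{n+1}$, $n\ge1$, is usually credited to Y.~B.~Choi, sharpening Curto's three-equal-weights result. Within the paper you may instead apply Theorem~\ref{quadra} to the one-dimensional summand.
\end{itemize}
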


\begin{remark}
    Since every $3$--hyponormal operator is cubically hyponormal, the previous results apply to $3$--hyponormal matrix-valued weighted shifts and hence to subnormal matrix-valued weighted shifts.
\end{remark}

\section{Quadratically hyponormal operator-valued weighted shifts}

We recall the following definition from \cite{Cur90}.

\begin{definition}
 An operator  $T\in \mathbf{B}(\mathcal{H})$ is said to be:
 \begin{itemize}
     \item Weakly $n$--hyponormal, if for every complex polynomial $P$ with degree $n$ or less, the operator $P(T)$ is hyponormal.
 \item  Polynomially hyponormal if it is weakly $n$--hyponormal for every $n\ge 1.$
  \end{itemize}
\end{definition} 
\begin{remark} We mention the following 
    \begin{itemize}
        \item Since $P(T)$ is hyponormal if and only if $(P-P(0))(T)$ is hyponormal, we may assume without loss of generality that $P(0)=0$
        \item A weakly $2$--hyponormal $T$ is called quadratically hyponormal. \ In particular,  $T$ is   quadratically hyponormal if and only if $T+ \lambda T^2$ is hyponormal for every $\lambda \in {\mathbb C}.$
    \end{itemize}
\end{remark}
We first observe that $W_{\mathcal A}$ is quadratically hyponormal if and only if $$C_\lambda=:[(W_{\mathcal A} + \lambda W_{\mathcal A}^2)^*, W_{\mathcal A} + \lambda W_{\mathcal A}^2]\ge 0$$ for every complex  $ \lambda$. 

For $x_n=(0,\cdots,0,x,0,\cdots)\in {\mathcal H}_n=\displaystyle\bigoplus_{i=0}^{\infty} \delta_{i,n} \mathcal{H}$,
where \( \delta_{i,n} \) is the Kronecker delta, ensuring that \( \mathcal{H} \) appears only in the \( n \)-th position while all other components are zero. 
\\We obtain
$$C_\lambda x_n=(0,\cdots,0, R_{n-1}^*x,D_{n}x,  R_{n}x, 0,\cdots)\in {\mathcal H}_{n-1} \oplus {\mathcal H}_n\oplus {\mathcal H}_{n+1}, $$
with  \ $D_{n} =A_n^2- A_{n-1}^2 +|\lambda|^2(A_nA^2_{n+1}A_n-A_{n-1}A^2_{n-2}A_{n-1})$ \ and \ $R_{n}=\lambda(A^2_{n+1}A_{n}-A_{n}A^2_{n-1})$.
Now, since $C_\lambda \ge 0,$ we obtain 
\begin{equation}
 M_{n}(\lambda) =: \begin{pmatrix}
    D_{0}           &  R^*_{0}        & 0       & 0          & \cdots &0\\
     R_{0} & D_{1}& R^*_{1}      & 0        &\cdots        &0\\
     0 &R_{1} & D_{2} &R^*_{2} &\cdots & 0  \\
     \vdots &\ddots&\ddots&\ddots&\ddots&\vdots& \\    0 & 0         & \cdots        &    R_{n-2}  &D_{n-1} &R^*_{n-1}\\ 
    0 & 0         & \cdots        &    0  &R_{n-1} &D_{n}\\
\end{pmatrix}\ge 0. \label{quadr}
\end{equation}
for every $\lambda \in {\mathbb C}$ and $n\ge 0$. \ 
Thus, we have 
\begin{prop} $W_{\mathcal A}$ is quadratically hyponormal if and only if $ M_{n,\lambda} \ge 0$, for every $n\in {\mathbb N}$ and $\lambda \in {\mathbb C}.$
\end{prop}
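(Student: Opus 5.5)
The proof reduces quadratic hyponormality to a statement about finite principal compressions of the commutator $C_\lambda$, which we then identify with the matrices $M_n(\lambda)$.

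First, $W_{\mathcal A}$ is quadratically hyponormal if and only if $C_\lambda\ge 0$ for every $\lambda\in\mathbb C$. This follows from the Remark preceding the Proposition, since hyponormality of $P(W_{\mathcal A})$ only depends on $P-P(0)$ and we may normalize the leading coefficient of $T$.

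Second, we compute the matrix of $C_\lambda$ with respect to $\ell^2(\mathcal H)=\bigoplus_{n\ge0}\mathcal H_n$. Expand
\[
C_\lambda=[W_{\mathcal A}^*,W_{\mathcal A}]+\bar\lambda[W_{\mathcal A}^{*2},W_{\mathcal A}]+\lambda[W_{\mathcal A}^{*},W_{\mathcal A}^2]+|\lambda|^2[W_{\mathcal A}^{*2},W_{\mathcal A}^2].
\]
The first and last commutators are block diagonal. Their $n$-th entries are $A_n^2-A_{n-1}^2$ and $A_nA_{n+1}^2A_n-A_{n-1}A_{n-2}^2A_{n-1}$, with the convention $A_{-1}=A_{-2}=0$; these follow from the diagonal formula for $[W_{\mathcal A}^*,W_{\mathcal A}]$ and the analogous one for $W_{\mathcal A}^{*2}W_{\mathcal A}^2$. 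The mixed commutators are a forward and a backward shift of one step. Evaluating on $x_n\in\mathcal H_n$ gives the subdiagonal block $R_n=\lambda(A_{n+1}^2A_n-A_nA_{n-1}^2)$ and the superdiagonal block $R_n^*$. This recovers the tridiagonal action $C_\lambda x_n=(\dots,R_{n-1}^*x,D_nx,R_nx,\dots)$ already displayed before the Proposition. Hence $C_\lambda$ is the infinite tridiagonal operator matrix whose leading $(n+1)\times(n+1)$ block is exactly $M_n(\lambda)$.

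Third, we prove the equivalence. For necessity, $M_n(\lambda)$ is the compression $P_nC_\lambda P_n$ to $\mathcal H_0\oplus\cdots\oplus\mathcal H_n$, so $C_\lambda\ge0$ gives $M_n(\lambda)\ge0$.

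For sufficiency, take any finitely supported vector $X=(x_0,\dots,x_n,0,\dots)$. Because $C_\lambda$ is tridiagonal, $\langle C_\lambda X,X\rangle$ only involves the blocks $D_0,\dots,D_n$ and $R_0,\dots,R_{n-1}$. Therefore it equals $\langle M_n(\lambda)(x_0,\dots,x_n),(x_0,\dots,x_n)\rangle\ge0$. Finitely supported vectors are dense in $\ell^2(\mathcal H)$ and $C_\lambda$ is bounded, since $\sup\|A_n\|<\infty$. By continuity of the quadratic form, $\langle C_\lambda X,X\rangle\ge0$ for all $X$, so $C_\lambda\ge0$.

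The main obstacle is the bookkeeping in the second step: getting the correct operator ordering in the noncommutative products for $R_n$ and $D_n$, and checking the boundary conventions at $n=0,1$. The rest is the standard compression and density argument.
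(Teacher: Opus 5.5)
Your proposal is correct and follows the paper's route: compute the tridiagonal block action of $C_\lambda$ on the slices $\mathcal H_n$ and identify $M_n(\lambda)$ as its finite sections, with your density argument making explicit the sufficiency direction that the paper leaves implicit. The only loose end is in your first step: normalizing the coefficient of $T$ does not cover $P(z)=bz^2$, but hyponormality of $W_{\mathcal A}^2$ follows from $C_\lambda\ge 0$ by dividing by $|\lambda|^2$ and letting $|\lambda|\to\infty$ (and in any case the paper's Remark asserts the equivalence you cite).
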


We begin with a propagation result in the matrix-valued case.
 \begin{prop}\label{propag1} Let ${\mathcal A}=(A_n)_{n\ge 0}$ be a sequence of non negative matrices. \  
  Suppose $W_{\mathcal A}$ is quadratically hyponormal and $A_n=A_{n+1} $ for some $n\ge 1$, then either $A_{n-1}=A_n=A_{n+1} $ or $A_n=A_{n+1}=A_{n+2}$.
 \end{prop}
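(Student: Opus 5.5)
The plan is to work with the finite compression of $C_\lambda$ given by (\ref{quadr}), restricted to three consecutive slots ${\mathcal H}_n\oplus{\mathcal H}_{n+1}\oplus{\mathcal H}_{n+2}$. Set $A:=A_n=A_{n+1}$, $P_0:=A^2-A_{n-1}^2\ge 0$ and $Q_0:=A_{n+2}^2-A^2\ge 0$; both are nonnegative by hyponormality. The goal is to show that $P_0=0$ or $Q_0=0$. Since $A_{n-1},A_n,A_{n+2}$ are positive, $P_0=0$ gives $A_{n-1}=A_n$ and $Q_0=0$ gives $A_{n+2}=A_{n+1}$, which is exactly the dichotomy claimed.

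Substituting $A_n=A_{n+1}=A$ into the formulas for $D_k,R_k$, the blocks become
\[
D_{n+1}=|\lambda|^2A(P_0+Q_0)A,\qquad R_n=\lambda AP_0,\qquad R_{n+1}=\lambda Q_0A,
\]
and
\[
D_n=P_0+|\lambda|^2E_n,\qquad D_{n+2}=Q_0+|\lambda|^2E_{n+2},
\]
with $E_n,E_{n+2}$ explicit self-adjoint matrices. The key feature is that the middle diagonal block vanishes to order $|\lambda|^2$, while the off-diagonal blocks are only of order $|\lambda|$. I will test the $3\times 3$ block matrix on vectors $(u,\,-\bar\lambda^{-1}A^{-1}c,\,w)$, choosing the phase so that both cross terms carry a minus sign. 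At order zero in $\lambda$ the quadratic form is
\[
\langle P_0(u-c),u-c\rangle+\langle Q_0(c-w),c-w\rangle\ge 0,
\]
which is automatic, so the leading order gives no information.

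The main obstacle is therefore a second-order analysis, in the spirit of Curto's scalar argument where the $|\lambda|^2$-coefficient of a determinant is computed. Fix a unit vector $x$ and take $u=x+\lambda a$, $c=x+\lambda b$, $w=x+\lambda d$. The zeroth-order part then vanishes, and the $|\lambda|^2$ coefficient is a quadratic form in $(a,b,d)$ with an inhomogeneous part coming from $E_n,E_{n+2}$ and $x$. Minimizing this in $(a,b,d)$, for instance with $b$ chosen to decouple the two Gram pieces, should give a nonnegativity condition on $x$. Judging from the scalar case, where the obstruction is $(\alpha_n^2-\alpha_{n-1}^2)(\alpha_{n+2}^2-\alpha_{n+1}^2)$, I expect a condition of the form
\[
\langle P_0x,x\rangle\,\langle Q_0x,x\rangle\le 0,
\]
possibly after conjugation by $A$. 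If this step turns out too messy with vectors, I would fall back to testing only on vectors of the form $(\alpha x,\beta A^{-1}x,\gamma x)$ with scalar $\alpha,\beta,\gamma$. This yields a scalar $3\times 3$ positive matrix, and I would expand its determinant as a polynomial in $|\lambda|^2$, reading off the sign of the lowest coefficient as was done in Theorem~\ref{cubl}.

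Once $\langle P_0x,x\rangle\langle Q_0x,x\rangle=0$ holds for every $x\in{\mathbb C}^p$, positivity of $P_0$ and $Q_0$ gives ${\mathbb C}^p=\ker P_0\cup\ker Q_0$. A vector space over ${\mathbb C}$ is never the union of two proper subspaces, so $\ker P_0={\mathbb C}^p$ or $\ker Q_0={\mathbb C}^p$. That is, $P_0=0$ or $Q_0=0$, which proves the proposition.
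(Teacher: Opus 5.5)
\noindent The proposal breaks down at its first step. The compression of $C_\lambda$ to $\mathcal H_n\oplus\mathcal H_{n+1}\oplus\mathcal H_{n+2}$ carries no information beyond $P_0,Q_0\ge 0$. So no expansion of it, to any order, can produce $\langle P_0x,x\rangle\langle Q_0x,x\rangle\le 0$.

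Take $\lambda>0$ (a diagonal unitary removes the phase) and write the middle entry as $-\lambda^{-1}A^{-1}c$; this covers every vector. Your order-zero computation is then an \emph{exact} identity: the quadratic form of the block at $(u,-\lambda^{-1}A^{-1}c,w)$ equals
\[
\langle P_0(u-c),u-c\rangle+\langle Q_0(c-w),c-w\rangle+\lambda^2\bigl(\langle E_nu,u\rangle+\langle E_{n+2}w,w\rangle\bigr),
\]
with no remainder, where $E_k=A_kA_{k+1}^2A_k-A_{k-1}A_{k-2}^2A_{k-1}$. These $E_k$ are the diagonal blocks of $[W_{\mathcal A}^{*2},W_{\mathcal A}^2]$. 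That commutator is positive, because quadratic hyponormality forces $W_{\mathcal A}^2$ to be hyponormal (divide $C_\lambda$ by $|\lambda|^2$ and let $|\lambda|\to\infty$).

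Hence the block is positive for every $\lambda$, whatever $P_0,Q_0\ge 0$ are. In particular, your second-order coefficient at $u=c=w=x$ is just $\langle (E_n+E_{n+2})x,x\rangle\ge 0$. In the scalar case, with $P=\alpha_n^2-\alpha_{n-1}^2$ and $Q=\alpha_{n+2}^2-\alpha_{n+1}^2$, the determinant of the block is $|\lambda|^4\alpha_n^2\bigl[PQ(E_n+E_{n+2})+|\lambda|^2(P+Q)E_nE_{n+2}\bigr]$. So $PQ$ enters with a plus sign, not as an obstruction. For instance, $\alpha_0=1$, $\alpha_1=\alpha_2=2$, $\alpha_3=3$, $\alpha_4=4$ passes this test at $n=1$ with $PQ=15$. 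Your fallback with vectors $(\alpha x,\beta A^{-1}x,\gamma x)$ only compresses this positive block further, so it fails for the same reason.

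What is missing is the coupling to the neighbouring slots $n-1$ and $n+3$. Only these can offset the positive terms $\lambda^2\langle E_nu,u\rangle$ and $\lambda^2\langle E_{n+2}w,w\rangle$. Take $u=c=w$ and add components $\lambda z$, $\lambda z'$ in slots $n-1$ and $n+3$. The form becomes $\lambda^2K(c,z,z')+O(\lambda^4)$. Minimizing over $z,z'$ (through $R_{n-1}$, $R_{n+2}$, $D_{n-1}$, $D_{n+3}$) replaces
\[
E_n \ \text{by}\ -P_0\bigl(A_{n-1}V^{-1}A_{n-1}-I\bigr)P_0\le 0,
\qquad
E_{n+2} \ \text{by}\ -Q_0\bigl(I+A_{n+2}U^{-1}A_{n+2}\bigr)Q_0,
\]
where $V=A_{n-1}^2-A_{n-2}^2$ (with $A_{-1}=0$ when $n=1$) and $U=A_{n+3}^2-A_{n+2}^2$, both taken invertible here. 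The resulting $\lambda^2$-coefficient is at most $-\|Q_0c\|^2$, which is exactly the kind of sign information you need.

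This is why the paper works with the five-slot compression: for $n=1$ it uses $\mathcal H_0\oplus\cdots\oplus\mathcal H_4$, tested on $(a_0A_0x,a_1x,a_2x,a_3x,a_4A_3x)$, and reads the sign off $p(0)$ after row and column reductions. Your closing step, that ${\mathbb C}^p$ is not a union of two proper subspaces, is sound. It would be a cleaner finish than the paper's eigenspace splitting once a valid pointwise inequality is in hand.
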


\begin{proof} \ Suppose $A_1=A_2= I$ and let us show that either $A_0=I$ or $A_3=I$. \ From (\ref{quadr}), we readily obtain \vspace{-12pt}
\begin{center}
    \tiny{\[ {  \begin{pmatrix}
(1+s^2)A_0^2 & sA_0 & 0&0 &0 \\
sA_0 & (1+s^2)I -A_0^2  &s( I- A^2_{0})& 0 &0\\
   0& s( I- A^2_{0}) &s^2(A^2_{3}-A_0^2 ) &s(A^2_{3}-I) & 0\\
         0&0 &        s(A^2_{3}-I) & A_3^2- I +s^2(A_3A^2_{4}A_3-I)&s( A_{3}A^2_{4} -A_{3}) \\ 
   0& 0 & 0 & s( A^2_{4}A_{3}-A_{3})&A_4^2- A_{3}^2 +s^2(A_4A^2_{5}A_4-A_{3}^2)
\end{pmatrix}\ge 0.} \]}
\end{center}Applying this inequality to vectors of the form $(a_0A_0x,a_1x, a_2x,a_3x,a_4A_3x)$ (where $a_0, a_1, a_2, a_3, a_4 \in \mathbb{C}$ and $x\in\mathcal{H}$), it follows that  
\begin{center}
\small{\begin{equation}\label{equ}
{  \begin{pmatrix}
(1+s^2)\|A_0^2x\|^2 & s\|A_0x\|^2 & 0&0 &0 \\
s\|A_0x\|^2 & \Gamma_1+ s^2\|x\|^2&  s\Gamma_1&0 &0 \\
  0&  s\Gamma_1  &s^2(\|A_3x\|^2-\|A_0x\|^ 2) &s\Gamma_2& 0  \\
 0&0& s\Gamma_2  &\Gamma_2 +s^2\Gamma_3 & s\Gamma_3'\\ 
  0&0 & 0 &s\Gamma_3' & \Gamma_3''+s^2\Gamma_4
\end{pmatrix} \ge 0,} \end{equation}}
\end{center}
where 
$$
\Gamma_1=  \|x\|^2-\|A_0x\|^2,
$$
$$
\Gamma_2=  \|A_3x\|^2-\|x\|^ 2,
$$
$$
\Gamma_3= \|A_4A_3x\|^2-\|x\|^2,
$$
$$
\Gamma_3'= \|A_4A_3x\|^2-\|A_3x\|^ 2,
$$
$$
\Gamma_3''= \|A_4A_3x\|^2-\|A_3^2x\|^ 2
$$
and
$$
\Gamma_4=   \|A_{5}A_4A_3x\|^2-\|A_{3}^2x\|^2.
$$
In particular, its determinant is positive, and using the column operation $C_3 \to C_3- s(C_2+C_4)$ gives 

\[{  \left|\begin{array}{ccccc}
(1+s^2)\|A_0^2x\|^2 & s\|A_0x\|^2 &-s^2\|A_0x\|^2&0 &0 \\
s\|A_0x\|^2 & \Gamma_1+ s^2\|x\|^2&  -s^3\|x\|^2&0 &0 \\
  0&  s\Gamma_1  & 0 &s\Gamma_2& 0  \\
 0&0& -s^3\Gamma_3 &\Gamma_2 +s^2\Gamma_3& s\Gamma_3'\\ 
  0&0 & -s^2\Gamma_3'  &s\Gamma_3' & \Gamma_3''+s^2\Gamma_4
\end{array}\right| \ge 0,} \] \vspace{6pt}
With the row operation $R_3\to R_3-s(R_2+R_4)$, we obtain
\[{ \left|\begin{array}{ccccc}
(1+s^2)\|A_0^2x\|^2 & s\|A_0x\|^2 &-s^2\|A_0x\|^2&0 &0 \\
s\|A_0x\|^2 & \Gamma_1+ s^2\|x\|^2&  -s^3\|x\|^2&0 &0 \\
 -s^2\|A_0x\|^2& -s^3\|x\|^2 & s^4\|A_4A_3x\|^2&-s^3\Gamma_3& -s^2\Gamma_3'  \\
 0&0& -s^3\Gamma_3&\Gamma_2 +s^2\Gamma_3& s\Gamma_3'\\ 
  0&0 & -s^2\Gamma_3'  &s\Gamma_3' &\Gamma_3''+s^2\Gamma_4
\end{array}\right| \ge 0} .\] \vspace{6pt}
Factoring out $s^4$, we get 
\[{   p(s)=   \left|\begin{array}{ccccc}
(1+s^2)\|A_0^2x\|^2 & s\|A_0x\|^2 &\|A_0x\|^2&0 &0 \\
s\|A_0x\|^2 & \Gamma_1+ s^2\|x\|^2&  s\|x\|^2&0 &0 \\
 \|A_0x\|^2& s\|x\|^2 & \|A_4A_3x\|^2&s\Gamma_3& \Gamma_3'  \\
 0&0& s\Gamma_3 &\Gamma_2 +s^2\Gamma_3& s\Gamma_3'\\ 
  0&0 & \Gamma_3'  &s\Gamma_3' & \Gamma_3''+s^2\Gamma_4
\end{array}\right| \ge 0.} \] \vspace{6pt}

In particular, 
\[  
    p(0)   = \left|\begin{array}{ccccc}
 \|A_0^2x\|^2 &0 &\|A_0x\|^2&0 &0 \\
0 & \Gamma_1& 0&0 &0 \\
 \|A_0x\|^2 & 0 & \|A_4A_3x\|^2 &0& \Gamma_3'  \\
 0&0& 0 &\Gamma_2 &0\\ 
  0&0 & \Gamma_3'  &0 & \Gamma_3''
\end{array}\right| =  \Gamma_1\Gamma_2\left|\begin{array}{ccc}
 \|A_0^2x\|^2  &\|A_0x\|^2&0 \\
 \|A_0x\|^2  & \|A_4A_3x\|^2 & \Gamma_3'  \\
  0 & \Gamma_3'  & \Gamma_3''
\end{array}\right| \ge 0. \vspace{6pt}
 \]
Expanding, we obtain  
$$\begin{array}{rl}
   p(0)  = &\Gamma_1\Gamma_2[\|A^2_{0}x\|^2(\|A_4A_3x\|^2\Gamma_3''- \Gamma_3^{'2}) - \|A_{0}x\|^4\Gamma_3'']\\ =& 
    \Gamma_1\Gamma_2\|A^2_{0}x\|^2(-\|A_4A_3x\|^2\|A_{3}^2x\|^2   +2\|A_4A_3x\|^2\|A_{3}x\|^2-\|A_{3}x\|^4)\\
     &  - \Gamma_1\Gamma_2\|A_{0}x\|^4(\|A_4A_3x\|^2- \|A_{3}^2x\|^2)
\end{array}$$
Assume now that $A_0\ne I$ and let us show that $A_3=I$.  \  Using the canonical decomposition for the self-adjoint matrices $\mathcal{H} = \displaystyle\bigoplus\limits_{\lambda\in \sigma(A_0)} E_{\lambda}  $ as a direct sum of eigenspaces, it suffices to show that 
$A_{3}|_{E_{\lambda}}=I|_{E_{\lambda}}$ for arbitrary $\lambda$.\\

We start with $ \lambda \ne 1$  an eigenvalue and   $x\in E_{\lambda}$   an  associated unit eigenvector. \ From $\|A^2_{0}x\|^2 = \|A_{0}x\|^4 =|\lambda|^4$, we get
$$\begin{array}{rl}
p(0)  =&|\lambda|^4\Gamma_1\Gamma_2(-\|A_4A_3x\|^2\|A_{3}^2x\|^2   + 2\|A_4A_3x\|^2\|A_{3}x\|^2\\ & -\|A_4A_3x\|^2+\|A_{3}^2x\|^2 -\|A_{3}x\|^4) \vspace{6pt}\\ 
     = &|\lambda|^4\Gamma_1\Gamma_2[-\|A_4A_3x\|^2\|A_{3}^2x-x\|^2  + \|A_{3}^2x\|^2 -\|A_{3}x\|^4] \vspace{6pt}\\
     \le & |\lambda|^4\Gamma_1\Gamma_2[-\|A_{3}^2x-x\|^2  + \|A_{3}^2x\|^2 -\|A_{3}x\|^4] \vspace{6pt}\\
    \end{array}$$
It follows that $p(0) \le 0$ an then  that  $p(0) =0$.

We deduce that $\Gamma_2= 0$ or $\|A_4A_3x\|^2\|A_{3}^2x-x\|^2  = \|A_{3}^2x\|^2 -\|A_{3}x\|^4 $.

Suppose $\|A_4A_3x\|^2\|A_{3}^2x-x\|^2  = \|A_{3}^2x\|^2 -\|A_{3}x\|^4 $. \ We will have 
$$
\begin{array}{ll}0&=\|A_4A_3x\|^2\|A_{3}^2x-x\|^2  -( \|A_{3}^2x\|^2 -\|A_{3}x\|^4)\vspace{6pt}\\
& =(\|A_4A_3x\|^2-1)\|A_{3}^2x-x\|^2 +\|A_{3}^2x-x\|^2  -( \|A_{3}^2x\|^2 -\|A_{3}x\|^4) \vspace{6pt}\\
& =(\|A_4A_3x\|^2-1)\|A_{3}^2x-x\|^2 +(\|A_{3}^2x\|^2-2\|Ax\|^2+1)  -( \|A_{3}^2x\|^2 -\|A_{3}x\|^4) \vspace{6pt}\\
& =(\|A_4A_3x\|^2-1)\|A_{3}^2x-x\|^2 +(\|A_{3}^2x\|^2-2\|Ax\|^2+1)  -( \|A_{3}^2x\|^2 -\|A_{3}x\|^4) \vspace{6pt}\\
&=(\|A_4A_3x\|^2-1)\|A_{3}^2x-x\|^2 +(\|A_{3}x\|^2-1)^2
\end{array}
$$
and hence $\|A_{3}x\|^2-1$.
 
Suppose now that  $A_0x=x$. \ Plugging in Equation \eqref{equ}, we derive that 
\begin{equation}  \begin{pmatrix}
  s^2(\|A_3x\|^2-\|A_0x\|^ 2) &s\Gamma_2& 0  \\
   s\Gamma_2  &\Gamma_2 +s^2\Gamma_3 & s\Gamma_3'\\ 
   0 &s\Gamma_3' & \Gamma_3''+s^2\Gamma_4
\end{pmatrix} \ge 0, \end{equation}
and then 
\begin{equation}\label{equ11}
Q(s) =: \left|\begin{array}{ccc}
s^2(\|A_3x\|^2-\|A_0x\|^ 2) &s\Gamma_2& 0  \\
 s\Gamma_2  &\Gamma_2 +s^2\Gamma_3& s\Gamma_3'\\ 
 0 &s\Gamma_3' &\Gamma_3''+s^2\Gamma_4
\end{array}\right| \ge 0, \end{equation} 
A computation now shows that $Q(s) = -s^4 (\|A_{3}x\|^2-1)^3\le 0.$  \ Hence  $A_3x~=~x $ as required. 
\end{proof}

We also have the following propagation result in the general case of operator-valued weighted shifts.
 \begin{prop}\label{propag2} Suppose $W_{\mathcal A}$ is quadratically hyponormal and $A_n=A_{n+1}=A_{n+2}$ for some $n\ge 0$, then $A_k=A_n$ for every $k\ge 1.$
 \end{prop}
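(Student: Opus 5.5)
The plan is to prove a forward step and a backward step separately. The \emph{forward step} is: if $A_n=A_{n+1}=A_{n+2}=:A$, then $A_{n+3}=A$. The \emph{backward step} is: if $n\ge 2$ and $A_n=A_{n+1}=A_{n+2}$, then $A_{n-1}=A$. Iterating the forward step, starting from the triple $(A_{n+1},A_{n+2},A_{n+3})$ and so on, gives $A_k=A$ for all $k\ge n$. Iterating the backward step down to the triple $(A_1,A_2,A_3)$ gives $A_k=A$ for $1\le k\le n$. Together these give $A_k=A_n$ for every $k\ge 1$. Both steps use the positivity of the finite sections $M_m(\lambda)$ in (\ref{quadr}). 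Throughout I take $\lambda=s\in\mathbb{R}$ and test against vectors supported on three or four consecutive slots, exactly as in the proof of Proposition~\ref{propag1}.

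\emph{Forward step.} The hypothesis makes several entries collapse. First, $R_{n+1}=s(A_{n+2}^2A_{n+1}-A_{n+1}A_n^2)=0$. Next, $D_{n+2}=s^2A(A_{n+3}^2-A^2)A$ has no $s$-free part. Finally, $R_{n+2}=s(A_{n+3}^2-A^2)A$ and $D_{n+3}=(A_{n+3}^2-A^2)+s^2(\cdots)$. I will compress $C_s$ to the slots $n+2,n+3,n+4$ and evaluate on vectors $(aA^{-1}x,\,bx,\,cA_{n+3}x)$. The diagonal $A$'s are then absorbed, and all entries become scalars in quantities such as
\[
\Gamma=\langle (A_{n+3}^2-A^2)x,x\rangle\ge 0,\qquad \|A_{n+4}A_{n+3}x\|^2,\ \ldots
\]
This produces a $3\times 3$ scalar matrix of the same shape as the one giving $Q(s)$ in (\ref{equ11}). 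Using $D_{n+2}=s^2(\cdot)$ together with $R_{n+2}=s(\cdot)$, I will perform row and column operations analogous to $C_3\to C_3-s(C_2+C_4)$. I expect the determinant to reduce to $-s^4\Gamma^3+O(s^6)$, or to a positive multiple of this. Nonnegativity for small $s$ then forces $\Gamma=0$ for every $x$. Hence $A_{n+3}^2=A^2$, and $A_{n+3}=A$ because both are positive.

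\emph{Backward step.} Here the collapse occurs on the left. With $B:=A_{n-1}$, we have $R_{n}=s(A^2A-AB^2)=sA(A^2-B^2)$ and $D_n=(A^2-B^2)+s^2(A^4-BA_{n-2}^2B)$. Moreover $D_{n+1}=s^2A(A^2-B^2)A$, again with no constant term, and $R_{n+1}=0$. I will compress $C_s$ to the slots $n-1,n,n+1$ and test on $(aB^{-1}x,\,bx,\,cA^{-1}x)$, which isolates $\Delta=\langle(A^2-B^2)x,x\rangle\ge 0$. A $3\times 3$ determinant argument should then show that its lowest-order term in $s$ is $-s^{4}\Delta^{3}$ times a positive factor, which forces $\Delta=0$. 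This is the same mechanism as the case $A_0x=x$ at the end of Proposition~\ref{propag1}. The one difference from the forward step is that the slot $n-1$ carries the extra term $s^2(\cdots)$ involving $A_{n-2}$. Hyponormality, $A_{k}^2\le A_{k+1}^2$, keeps this term nonnegative, so it only enters at higher order in $s$.

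The main obstacle is that the weights need not commute, so the scalar identities $\|A^2x\|=\|Ax\|^2$ used in Proposition~\ref{propag1} are not available. The test vectors must therefore be chosen, for instance $A^{-1}x$ in the slot preceding an $A(\cdot)A$ entry, so that every off-diagonal entry collapses to one of the same quadratic forms $\Gamma$ or $\Delta$. Otherwise the $s^4$ coefficient will not factor as a cube. If a direct determinant computation becomes unwieldy, I will fall back on the Šmul'jan criterion, Lemma~\ref{smuljano}, applied to the $2\times2$ block with the degenerate diagonal entry $D_{n+2}$ (respectively $D_{n+1}$). That entry is $O(s^2)$ while the coupling is $O(s)$, so the range inclusion $ZU=Y^*$ together with $X\ge U^*ZU$ should directly force $A_{n+3}^2-A^2$ (respectively $A^2-B^2$) to vanish as $s\to 0$.
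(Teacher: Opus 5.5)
Your architecture matches the paper's. You prove one-step forward and backward propagation, each read off from the compression of $C_s$ to three consecutive slots ($n+2,n+3,n+4$, resp.\ $n-1,n,n+1$), with real $s$, scalar test vectors, and the lowest-order coefficient of the resulting $3\times3$ determinant. For $A=I$ your forward vectors are exactly the paper's $(ay,by,cA_3y)$.

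\emph{Forward step: the sign of the leading coefficient is missing.} The step that carries all the content is the strict negativity of that coefficient when $\Gamma>0$ (resp.\ $\Delta>0$). You only expect it, and the reason you give is false. Only the entries coupling to the degenerate slot collapse to $\Gamma$; for example $\langle R_{n+3}x,A_{n+3}x\rangle$ involves $A_{n+4}$. The column operation $C_2\to C_2-\tfrac1s C_1$ gives $\det=s^4\,\Gamma\,(\alpha\gamma-|\beta|^2)+O(s^6)$. Here $\alpha,\beta,\gamma$ are the entries of the leading-order block on slots $n+3,n+4$, tested at $x$ and $A_{n+3}x$. For $A=I$ and $\|x\|=1$,
\[
\alpha\gamma-\beta^2=-\|A_{n+4}A_{n+3}x\|^2\,\|(A_{n+3}^2-I)x\|^2+\bigl(\|A_{n+3}^2x\|^2-\|A_{n+3}x\|^4\bigr).
\]
The bracket is $\ge 0$ by Cauchy--Schwarz and vanishes only at eigenvectors of $A_{n+3}$. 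So the coefficient does not factor as a cube and is not visibly negative; its sign is the whole issue.

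The paper settles this with an extra idea. It uses approximate eigenvectors for $\|A_3\|$ (forward) and for $\min\sigma(A_1)$ (backward), which kill the bracket and leave $-c\,(\|A_3\|^2-1)^2\ge 0$ with $c>0$. With your vectors a cleaner route exists, but you must supply it. The leading block factors as
\[
N=\begin{pmatrix}A_{n+3}\\ I\end{pmatrix}A_{n+4}^2\begin{pmatrix}A_{n+3}& I\end{pmatrix}-\begin{pmatrix}A^2\\ A_{n+3}\end{pmatrix}\begin{pmatrix}A^2& A_{n+3}\end{pmatrix},
\]
so its form at $(x,-A_{n+3}x)$ equals $-\|(A_{n+3}^2-A^2)x\|^2$. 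Since $\alpha,\gamma\ge 0$ (they come from diagonal blocks of $C_s\ge 0$), this forces $\alpha\gamma-|\beta|^2<0$ whenever $\Gamma>0$, for every $x$.

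\emph{Backward step: the heuristic about $A_{n-2}$ points the wrong way.} $A_{n-2}$ is not a higher-order term; it is the source of the contradiction. The leading-order block on slots $n-1,n$ is built from the constant term $A_{n-1}^2-A_{n-2}^2$ of $D_{n-1}$, from $R_{n-1}$, and from the $s^2$-part $A^4-A_{n-1}A_{n-2}^2A_{n-1}$ of $D_n$. It is
\[
K=\begin{pmatrix}A_{n-1}\\ A^2\end{pmatrix}\begin{pmatrix}A_{n-1}& A^2\end{pmatrix}-\begin{pmatrix}A_{n-2}\\ A_{n-1}A_{n-2}\end{pmatrix}\begin{pmatrix}A_{n-2}& A_{n-2}A_{n-1}\end{pmatrix},
\]
and its form at $(A_{n-1}^{-1}A^2x,-x)$ is $-\|A_{n-2}A_{n-1}^{-1}(A^2-A_{n-1}^2)x\|^2$. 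Without the $A_{n-2}$ terms (as for $n=1$, where $A_{-1}=0$) one has $K\ge 0$ and nothing follows. This is why the statement says nothing about $A_0$.

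Unless you normalize $A=I$ as the paper does, your slot-$(n-1)$ vector $A_{n-1}^{-1}x$ must be replaced by $A_{n-1}^{-1}A^2x$. For $A=I$ your choice is exactly right, and it avoids the eigenvector restriction that the paper's vector $A_1x$ forces.

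\emph{The Smul'jan fallback does not bypass any of this.} The $2\times 2$ block on slots $n+2,n+3$ equals $\begin{pmatrix}sA\\ I\end{pmatrix}(A_{n+3}^2-A^2)\begin{pmatrix}sA& I\end{pmatrix}$ plus a nonnegative term, so it is positive for every $s$. The $O(s^2)$-versus-$O(s)$ scaling therefore yields no contradiction by itself. Any contradiction has to come from the third slot, through $N$ (resp.\ $K$).
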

\begin{proof}
Multiplying both sides by $A_0^{-1}$, we may suppose $A_0=I$. \
Suppose $A_0=A_1=A_2= I$ and let us show that $A_3=I$. \ Notice that 
\[ M_{n,s} \ge 0 \mbox{ for  every } n\in\mathbb{N} \mbox{ and } s\in\mathbb{R} \Rightarrow   \begin{pmatrix}
        D_{2,s} &R^*_{2,s} &0    \\
     R_{2,s}  &D_{3,s} &R^*_{3,s}\\ 
      0  &R_{3,s} &D_{4,s}\\
\end{pmatrix}\ge 0. \] \ Therefore, we readily obtain \vspace{-12pt}
\begin{center}
\tiny{$$  \begin{pmatrix}
    s^2( A^2_{3} -I) &s(A^2_{3}-I) &0  \\
                 s(A^2_{3}-I) &A_3^2- I +s^2(A_3A^2_{4}A_3-I)&s( A_{3}A^2_{4} -A_{3}) \\ 
   0 & s( A^2_{4}A_{3}-A_{3})&A_4^2- A_{3}^2 +s^2(A_4A^2_{5}A_4-A_{3}^2)
\end{pmatrix}\ge 0. $$}
\end{center}
Denote $\Gamma =\|A_{3}y\|^2-\|y\|^2$. \ Applying to $(ay,by, cA_3y)$, for arbitrary $a,b,c$, it follows that \vspace{-12pt} 
\begin{center}
\tiny{$$   \begin{pmatrix}
    s^2\Gamma  &s\Gamma  &0  \\
                 s\Gamma  &\Gamma + s^2(\|A_4A_3y\|^2-\|y\|^ 2)  & s(\|A_4A_3y\|^2-\|A_3y\|^ 2)\\ 
   0 &s(\|A_4A_3y\|^2-\|A_3y\|^ 2) &\|A_4A_3y\|^2- \|A_{3}^2y\|^2 +s^2(\|A_{5}A_4A_3y\|^2-\|A_{3}^2y\|^2) 
\end{pmatrix} \ge 0. $$}
\end{center}
 In particular, the determinant is positive. \ After the  column operation $C_2\to C_2-\frac{1}{s}C_1$, and simplification by $s^2$, we obtain \vspace{-12pt} 
 \begin{center}
\tiny{$$p(s) =: \left|\begin{array}{lcc}
  \|A_4A_3y\|^2-\|y\|^ 2 & &\|A_4A_3y\|^2-\|A_3y\|^ 2 \\ 
  \|A_4A_3y\|^2-\|A_3y\|^ 2& & \|A_4A_3y\|^2- \|A_{3}^2y\|^2 +s^2(\|A_{5}A_4A_3y\|^2-\|A_{3}^2y\|^2) 
\end{array}\right| \ge 0, $$ }
\end{center}
and then 
$$ \begin{array}{lll}
     p(0)&= (\|A_4A_3y\|^2-\|y\|^ 2)(\|A_4A_3y\|^2- \|A_{3}^2y\|^2)-(\|A_4A_3y\|^2-\|A_3y\|^ 2 )^2 \vspace{6pt}\\
       & = -\|A_4A_3y\|^2\|(A_{3}^2-I)y\|^2 +\|y\|^ 2\|A_{3}^2y\|^2-\|A_{3}y\|^4 \vspace{6pt}\\
       & \ge 0.
\end{array} $$
From   $A_3\ge I$, we derive  $\sigma(A_3) =\sigma_{ap}(A_3) \subseteq[1,\|A_3\|]$ and $\|A_3\|\in \sigma_{ap}(A_3)$. \ Hence, there exists $y_n$ unit vectors, such that $\lim\limits_{n\to \infty} (A_3-\|A_3\|)y_n =0$. \ We  have, in particular, $\lim\limits_{n\to \infty}\|A_3y_n\|=\|A_3\|$ and $ \lim\limits_{n\to \infty}\|A_3^2y_n\|=\|A_3\|^2$. \ 

By writing 
$$
1=\|y_n\|= \|(A_4A_3)^{-1}A_4A_3y_n\|\le \|(A_4A_3)^{-1}\|\|A_4A_3y_n\|,
$$
we derive 
$$
-(\|A_3\|^2-1)^2 \ge 0.
$$  
It follows  that $\|A_3\|^2 = I$, and then $\sigma(A_3)= \{1\}$. \ Finally  $A_3 = I$. \\

The proof of backward propagation runs in a similar way. \ We include it here for completeness.

Suppose $A_2=A_3=A_4=I$ and let us show that $A_1=I$. \ As above, 
we have $$ M_{n,s}  \ge 0 \Rightarrow \begin{pmatrix}
      D_{1,s}& R_{1,s}^*         & 0       \\
    R_{1,s} & D_{2,s} &R_{2,s}^*   \\
    0              &R_{2,s} &D_{3,s}\\
\end{pmatrix}\ge 0. $$
Using vectors of the form $(A_1x,x, x)$, and denoting $\Gamma'= \| x\|^2-\|A_1 x\|^2 $ it follows that
\begin{center}
\tiny{$$  \begin{pmatrix}
     (1+s^2)\|A_1x\|^2-\|A_0A_1x\|^2 & s(\|A_1x\|^2-\|A_0A_1x\|^2 )        & 0       \\
  s(\|A_1x\|^2-\|A_0A_1x\|^2 )  & \Gamma'+s^2(\| x\|^2- \|A_0A_1 x\|^2)&s\Gamma'  \\
    0              &s\Gamma'  & s^2\Gamma'\\
\end{pmatrix}\ge 0. $$}
\end{center}
In particular, the determinant is positive. \ After the  column operation $C_2\to C_2-\frac{1}{s}C_3$, and after factoring out $s^2$, we obtain 
$$
p(s)=: \left|\begin{array}{ll}
     (1+s^2)\|A_1^2x\|^2-\|A_0A_1x\|^2& \|A_1x\|^2-\|A_0A_1x\|^2 \vspace{6pt}\\
  \|A_1x\|^2-\|A_0A_1x\|^2  & \| x\|^2- \|A_0A_1 x\|^2
\end{array}\right|\ge 0.
$$
Then 
$$
p(0)= (\|A_1^2x\|^2-\|A_0A_1x\|^2) (\| x\|^2- \|A_0A_1 x\|^2) - (\|A_1x\|^2-\|A_0A_1x\|)^2) 
   \ge 0,$$
Expanding gives
   $$
   -\|A_0A_1x\|^2( \|A_1^2x\|^2+\| x\|^2  -2\|A_1x\|^2) +\| x\|^2\|A_1x\|^2- \|A_1x\|^4\ge 0,
   $$
   and equivalently,
    $$
    -\|A_0A_1x\|^2( \|(A_1-I) x\|^2+\| x\|^2\|A_1x\|^2- \|A_1x\|^4\ge 0,
    $$
We now use a symmetric argument: $A_1\le A_2= I$ implies   $\sigma(A_1) =\sigma_{ap}(A_1) \subseteq[\alpha, 1]$ and $\alpha = min(\sigma(A_1)) \in \sigma_{ap}(A_1)$. \ Hence, there exists $y_n$ unit vectors, such that $\lim\limits_{n\to \infty} (A_1-\alpha)y_n =0$. \ We get in particular $\lim\limits_{n\to \infty}\|A_1y_n\|=\alpha, \lim\limits_{n\to \infty}\|A_1^2y_n\|=\alpha^2$ and by writing $1=\|y_n\|= \|(A_0A_1)^{-1}A_0A_1y_n\|\le \|(A_0A_1)^{-1}\|\|A_0A_1y_n\|$, we derive   $$-(\alpha^2-1)^2 \ge 0,$$  
 The last fact gives $\sigma(A_1) =\{1\}$, and since $A_1$ is selfadjoint, this forces $A_1~=~I$. 
\end{proof}
We use Propositions \ref{propag1} and \ref{propag2} to derive: 
\begin{thm}\label{quadra} Let  $ W_{\mathcal A}$ be a quadratically hyponormal and $A_n=A_{n+1} $ for some $n\ge 1 $. \ Then $W_{\mathcal A}$ is flat.
\end{thm}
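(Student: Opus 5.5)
The plan is to combine Proposition \ref{propag1}, which upgrades two equal consecutive weights to three, with Proposition \ref{propag2}, which spreads three equal consecutive weights to every index $k\ge 1$. The setting is the matrix-valued one for which Proposition \ref{propag1} was proved; Proposition \ref{propag2} holds for operator weights in general.

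Suppose $A_n=A_{n+1}$ for some $n\ge 1$. Since $W_{\mathcal A}$ is quadratically hyponormal, Proposition \ref{propag1} gives one of two outcomes:
\[
A_{n-1}=A_n=A_{n+1} \quad\text{or}\quad A_n=A_{n+1}=A_{n+2}.
\]

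In the first case we have three consecutive equal weights starting at index $n-1\ge 0$. In the second case the three equal weights start at index $n\ge 1$.

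In either case Proposition \ref{propag2} applies, with starting index $m=n-1$ or $m=n$, and yields $A_k=A_m$ for every $k\ge 1$. Because $m+1\ge 1$, this says $A_k=A_{m+1}$ for all $k\ge 1$, so $A_k=A_1$ for every $k\ge 1$. That is exactly flatness.

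One subtlety is that Proposition \ref{propag2} is stated with conclusion ``for every $k\ge 1$''. Its proof, however, only shows the forward step $A_{m+3}=I$ and the backward step $A_{m-1}=I$, and these must be iterated.

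To make the iteration explicit, I would argue by induction. For the forward direction, once $A_j=A_{j+1}=A_{j+2}$ holds, the forward step gives $A_{j+3}=A_j$. The block $A_{j+1}=A_{j+2}=A_{j+3}$ is then again a run of three equal weights, so the argument repeats.

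For the backward direction, we descend in the same way. This works as long as the index of the weight being recovered is $\ge 1$, because the backward argument compresses $M_{n,s}$ at positions $D_{m-1},D_m,D_{m+1}$ and needs $m-1\ge 1$ (the step recovering $A_1$ uses $D_1$). We stop at $A_1$, since $A_0$ is not asserted to be equal to the rest, in line with the scalar examples of \cite{Cur90}.

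The normalization by $A_m^{-1}$ in Proposition \ref{propag2} is valid because the weights are positive and invertible. I expect the main obstacle to be in the first case with $n=1$: there $A_0=A_1=A_2$, and Proposition \ref{propag2} must be applied with $m=0$, which the statement allows since $n\ge 0$. Its forward proof then handles $A_3$ and beyond, so no backward step is needed.
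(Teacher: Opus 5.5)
Your proposal is correct and matches the paper's proof: Proposition~\ref{propag1} turns $A_n=A_{n+1}$ ($n\ge 1$) into three equal consecutive weights, starting at index $n-1\ge 0$ or at $n$, and Proposition~\ref{propag2} then gives $A_k=A_1$ for all $k\ge 1$, which is flatness. Since you cite Proposition~\ref{propag2} as stated, you do not need to revisit its internal steps, and the claim that normalizing $A_m$ to $I$ follows from positivity and invertibility alone is not valid: passing from $A_k$ to $A_m^{-1}A_k$ (or $A_m^{-1/2}A_kA_m^{-1/2}$) is not a unitary equivalence and need not preserve quadratic hyponormality when $A_m$ does not commute with the other weights.
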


Using Theorem \ref{cubl} and the fact that cubically hyponormal operators are quadratically hyponormal, we derive the next corollary. 
 \begin{coro} 
    Let $W_{\mathcal A}$ be a cubically hyponormal matrix-valued weighted shift such that $A_{n_0}=A_{n_0+1} $ for some $n_0\ge 0$. \ Then the following statements hold.
    \begin{enumerate}
        \item If $n_0\ge 1$, we have  $A_{n}=A_{1} $ for every $n\ge 1$. \vspace{6pt}
           \item If $n_0=0$, then   $A_{n}=A_{0} $ for every $n\ge 1$.
    \end{enumerate}
\end{coro}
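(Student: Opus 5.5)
This corollary follows by combining Theorem \ref{quadra} (global flatness for quadratically hyponormal shifts) with the local result Theorem \ref{cubl} and Corollary \ref{cublcor}. The plan is to split according to whether $n_0\ge 1$ or $n_0=0$.

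\emph{Case $n_0\ge 1$.} Every cubically hyponormal operator is quadratically hyponormal, since one may take $\mu=0$ in $T+\lambda T^2+\mu T^3$. Hence $W_{\mathcal A}$ is quadratically hyponormal with $A_{n_0}=A_{n_0+1}$ and $n_0\ge 1$. Theorem \ref{quadra} then says $W_{\mathcal A}$ is flat, meaning $A_n=A_1$ for every $n\ge 1$, which is statement (1).

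\emph{Case $n_0=0$.} Here $A_0=A_1$, and I would argue vector by vector. For every $x\in\mathcal H$ we have $A_0x=A_1x$, so Corollary \ref{cublcor} gives $A_nx=A_0x$ for all $n\ge 0$. Since $x$ is arbitrary, $A_n=A_0$ for every $n$, which is statement (2).

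Corollary \ref{cublcor} is the shifted form of Theorem \ref{cubl} at the index $k=0$. I would justify it by running the same determinant argument with $A_0x=A_1x$ in place of $A_1x=A_2x$. To do this, normalize as in the proof of Theorem \ref{cubl} (multiply by $A_0^{-1}$ on the relevant piece) and compress $M(s,t)$ to the first three coordinates. The hypothesis is then $A_0x=A_1x=x$, and the claim is $A_2x=x$. The sign analysis of the $t^1$ and $t^3$ coefficients via Proposition \ref{ppos} forces $\|A_2x\|=1$. Together with $A_2^2\ge A_1^2$, this gives $A_2x=x$. Induction on $n$, applying Theorem \ref{cubl} at each subsequent index, then propagates the equality forward.

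If only the global statement is wanted, one can avoid the local argument. Apply Corollary \ref{cublcor} to each vector of an orthonormal basis of $\mathbb C^p$; linearity then gives $A_n=A_0$ on all of $\mathbb C^p$.

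The main obstacle is the step where the initial weight is involved, namely case (2). Theorem \ref{quadra} cannot be used there, because quadratic hyponormality alone does not force flatness when $\alpha_0=\alpha_1$, as the scalar examples of \cite{Cur90} show. The full cubic hyponormality, through the parameter $t$ in $M(s,t)$, is therefore genuinely needed. This is exactly what Corollary \ref{cublcor} supplies, so the remaining work is only to check that the normalization $A_0=I$ in Theorem \ref{cubl} is compatible with the hypothesis $A_0x=A_1x$.
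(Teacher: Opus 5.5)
Your proposal matches the paper's argument: statement (1) follows from quadratic hyponormality (take $\mu=0$) together with Theorem~\ref{quadra}, and statement (2) follows from the local forward propagation of Theorem~\ref{cubl}, in the form of Corollary~\ref{cublcor}, applied vector by vector. The normalization issue you flag is harmless here: since $A_0=A_1$, work with an orthonormal eigenbasis of $A_0$ and, for an eigenvector with eigenvalue $\lambda>0$, rescale $W_{\mathcal A}$ by the scalar $\lambda^{-1}$ (this preserves cubic hyponormality); the $3\times 3$ compression in the proof only uses $A_0x=A_1x=x$, not $A_0=I$ on the whole space.
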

\section{Concluding remarks and open questions}
\begin{remark} 
\begin{itemize}
    \item It is not difficult to find a $2$--hyponormal matrix-valued weighted shift $W_{\mathcal A}$ such that $A_0=A_{1}$ is not flat. \ Using Theorem \ref{cubl}, such a shift cannot be cubically hyponormal. 
    \item In contrast with cubically hyponormal and $2$--hyponormal matrix-valued weighted shifts, we have not been able to obtain local propagation results for quadratically hyponormal matrix-valued weighted shifts. \ However, using our techniques, it is possible to obtain the forward propagation results. \ It would be of interest to show that the backward propagation phenomenon also holds.
    \item All results in this paper extend easily to operator-valued weighted shifts with algebraic weight sequences. \ It is then natural to ask if these results extend to the more general case of non-algebraic operator-valued weighted shifts.
\end{itemize}
\end{remark} 

\section{Declarations}
\subsection{Funding} 
The first-named author was partially supported by NSF Grant DMS-2247167. \ The last-named author was partially supported by the Arab Fund Foundation Fellowship Program, The Distinguished Scholar Award-File 1026.

\:::::::::::::::::::::

\subsection{Conflicts of interest/competing interests}

{\bf Non-financial interests}: \:::::.:::

\subsection*{Data availability.}
All data generated or analyzed during this study are included in this article.


\end{document}